\documentclass[11pt,oneside]{article}
\usepackage[utf8]{inputenc}
\usepackage[english]{babel}
\usepackage{amsmath}
\usepackage{amsfonts}
\usepackage{amssymb}
\usepackage{amsthm}
\usepackage{tikz-cd} 
\usepackage[left=3cm,right=3cm,top=3cm,bottom=3cm]{geometry}

\usepackage[]{algorithm2e}

\usepackage[normalem]{ulem}
\useunder{\uline}{\ul}{}
\theoremstyle{plain}
\newtheorem{teo}{}[section]
\newtheorem{prop}[teo]{Proposition}

\newtheorem{rem}[teo]{Remark}
\newtheorem{lem}[teo]{Lemma}
\newtheorem{thm}[teo]{Theorem}
\newtheorem{df}[teo]{Definition}
\theoremstyle{definition}
\newtheorem{ex}[teo]{Example}

\newcommand\blfootnote[1]{%
  \begingroup
  \renewcommand\thefootnote{}\footnote{#1}%
  \addtocounter{footnote}{-1}%
  \endgroup
}

\title{Computational approximations of compact metric spaces}
\author{Pedro J. Chocano, Manuel A. Mor\'on and Francisco R. Ruiz del Portal}
\date{}

\begin{document}
\maketitle

\begin{abstract}
Given a compact metric space $X$, we associate to it an inverse sequence of finite $T_0$ topological spaces. The inverse limit of this inverse sequence contains a homeomorphic copy of $X$ that is a strong deformation retract. We provide a method to approximate the homology groups of $X$ and other algebraic invariants. Finally, we study computational aspects and the implementation of this method.
\end{abstract}

\section{Introduction and preliminaries}\label{sec:introduccion}
\blfootnote{2020  Mathematics  Subject  Classification: 06A06,  	06A11,  55N05,  	55Q07,   	54E45}
\blfootnote{Keywords: finite topological spaces, compact metric spaces, approximation, homotopy, homology groups.}
\blfootnote{This research is partially supported by Grants PGC2018-098321-B-100 and BES-2016-076669 from Ministerio de Ciencia, Innovación y Universidades (Spain).}

Approximation of topological spaces is an old theme in geometric topology and it can have applications to the study of dynamical systems. For example, the study of dynamical objects such as attractors or repellers. In general, these objects do not have a good local behavior and therefore it can be difficult a direct study of them. For this reason, it is important to develop a theory of approximation based on finite data that can be obtained from experiments. Let $X$ be a compact metric space. Then there are two classical approaches to approximate $X$ from a theoretical point of view. One approach is to find a simpler topological space $Y$ such that $X$ and $Y$ share some topological properties (compactness, homotopy type, etc.) or algebraic properties (homology and homotopy groups, etc.). Polyhedra have been used to this aim, see for instance \cite{mardevsic2005approximating}. We recall a classical result, which is known as the nerve theorem. The idea is to use good covers to construct a simplicial complex that reconstructs the homotopy type of $X$, see \cite{borsuk1948imbedding} and \cite{mccord1967homotopy} for more details. Given an open cover $\mathcal{U}$ for a compact metric space $X$, the nerve of $\mathcal{U}$ is a simplicial complex $N(\mathcal{U})$ such that its vertices are the elements $U$ of $\mathcal{U}$ and $U_0,...,U_n\in \mathcal{U}$ span a simplex of $N(\mathcal{U})$ whenever $U_0\cap ... \cap U_n\neq \emptyset$.
\begin{thm}[Nerve theorem]\label{thm:nerveLemma} If $\mathcal{U}$ is an open cover of a compact space $X$ such that every non-empty intersection of finitely many sets in $\mathcal{U}$ is contractible, then $X$ is homotopy equivalent to the nerve of $\mathcal{U}$.
\end{thm}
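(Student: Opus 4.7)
The plan is to use the Mayer--Vietoris blow-up (equivalently, the homotopy colimit of the diagram of non-empty intersections): a space $B$ that admits natural maps to both $X$ and $|N(\mathcal{U})|$, both of which will be shown to be homotopy equivalences.

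First I would exploit compactness of $X$ to replace $\mathcal{U}$ by a finite subcover $\mathcal{U}'=\{U_1,\dots,U_n\}$; the hypothesis on intersections still holds, and a homotopy equivalence $X\simeq |N(\mathcal{U}')|$ suffices. I would then set
\[
B \;=\; \bigcup_{\sigma}\Bigl(\bigcap_{i\in\sigma}U_i\Bigr)\times |\sigma| \;\subset\; X\times |N(\mathcal{U}')|,
\]
where $\sigma$ ranges over simplices of $N(\mathcal{U}')$, together with its two projections $p\colon B\to X$ and $q\colon B\to |N(\mathcal{U}')|$.

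For the map $p$, the fibre over $x\in X$ is the realisation of the simplex on $\{U_i : x\in U_i\}$, hence contractible. I would exhibit a section $s(x)=(x,\sum_i \phi_i(x) v_i)$ built from a partition of unity $\{\phi_i\}$ subordinate to $\mathcal{U}'$, and show $s\circ p\simeq \mathrm{id}_B$ by a straight-line homotopy inside each fibre; thus $p$ is a homotopy equivalence. For $q$, the fibre over $t$ equals $\bigcap_{i\in\sigma(t)}U_i$ with $\sigma(t)$ the minimal simplex containing $t$, which is contractible by hypothesis.

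The main obstacle is upgrading the pointwise contractibility of the fibres of $q$ to a genuine homotopy equivalence; unlike $p$, there is no canonical partition-of-unity section in sight. I would proceed by induction on the skeleta of $N(\mathcal{U}')$, contracting $q^{-1}(|\sigma|)$ onto $q^{-1}(v_\sigma)$ for each simplex $\sigma$ and extending the partial homotopy using the homotopy extension property, which applies since each inclusion $\bigcap_{i\in\tau}U_i\hookrightarrow \bigcap_{i\in\sigma}U_i$ (for $\sigma\subseteq\tau$) is a cofibration. Composing gives $X\simeq B\simeq |N(\mathcal{U}')|$, completing the proof.
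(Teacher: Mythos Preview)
The paper does not give its own proof of this theorem: it is stated in the introduction as a classical background result, with citations to Borsuk and McCord, and no argument is supplied. So there is no in-paper proof against which to compare your proposal.

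On its own merits, your blow-up strategy is standard and the treatment of $p$ is fine, but the handling of $q$ has a genuine gap. You assert that each inclusion $\bigcap_{i\in\tau}U_i\hookrightarrow \bigcap_{i\in\sigma}U_i$ (for $\sigma\subseteq\tau$) is a cofibration, so as to run a skeletal induction via the homotopy extension property. But these are inclusions of open subsets of an arbitrary compact space, and such inclusions are almost never cofibrations (cofibrations are closed inclusions admitting a mapping-cylinder neighbourhood retraction). Without this, your inductive gluing does not go through as written. The usual repair is to show instead that $q$ is a \emph{weak} homotopy equivalence by a local argument over the open stars of vertices (each such preimage is contractible by hypothesis), and then invoke Whitehead's theorem---which in turn requires that $X$ have the homotopy type of a CW complex, a hypothesis implicit in most precise statements of the nerve theorem but not present in the version quoted here. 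A smaller issue: passing to a finite subcover $\mathcal{U}'$ gives $X\simeq|N(\mathcal{U}')|$, not $X\simeq|N(\mathcal{U})|$ as the statement demands; you still owe an argument that the two nerves are homotopy equivalent.
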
 
\begin{ex} We consider the topological space $X\subseteq \mathbb{R}^2$ given in Figure \ref{figure:nervelemaejemplo}. Let $\mathcal{U}$ denote the open cover given by $U_1,U_2,U_3,U_4$ and all the possible intersections of them. It is clear that $N(\mathcal{U})$ has the same homotopy type of $X$, see Figure \ref{figure:nervelemaejemplo}.
\begin{figure}[h]
\centering
\includegraphics[scale=1]{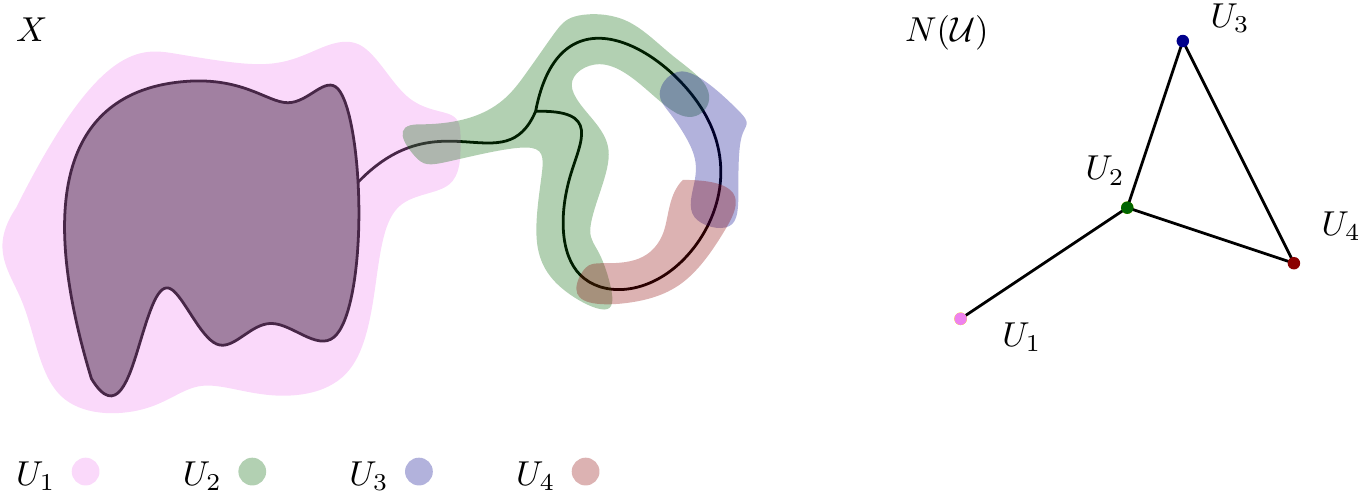}
\caption{$X$, schematic representation of $\mathcal{U}$ and $N(\mathcal{U})$.}\label{figure:nervelemaejemplo}
\end{figure}
\end{ex}
This approach has a drawback. It is not always easy to find an open cover satisfying the hypothesis of the nerve theorem. Recently, interesting results have been obtained modifying the notion of good cover. In \cite{govc2017approximate}, the hypothesis of being a good cover is relaxed. Then using persistent homology, results about the reconstruction of the homology of the original space are obtained. Using Vietoris-Rips complexes,  results of reconstruction have been obtained for Riemannian manifolds in \cite{hausmann1995onVietoris}.

A different approach is to approximate $X$ studying the inverse limit of an inverse sequence. We briefly recall the notion of inverse sequence and inverse limit, for a complete exposition see \cite{mardevsic1982shape}. An inverse sequence of finite $T_0$ topological spaces $(X_n,q_{n,n+1})$ consists of a sequence of finite $T_0$ topological spaces $\{ X_n\}_{n\in \mathbb{N}}$, which are called the terms, and a continuous map $q_{n,n+1}:X_{n+1}\rightarrow X_n$ for every $n\in \mathbb{N}$, which is called the bonding map, satisfying that $q_{n,m}=q_{n,l}\circ q_{l,m}$, where $n\leq l\leq m$. 
\begin{df} Let $(X_n,q_{n,n+1})$ be an inverse sequence of finite $T_0$ topological spaces. Let $\Pi_{n\in \mathbb{N}} X_n$ denote the Cartesian product of the topological spaces $X_i$ with the product topology. The inverse limit of $(X_n,q_{n,n+1})$ is a subspace of $\Pi_{n\in \mathbb{N}} X_n$ which consists of all points $x$ satisfying $\pi_n(x)=q_{n,m}(\pi_m(x))$ for every $m\geq n\in \mathbb{N}$, where $\pi_i:\Pi_{n\in \mathbb{N}} X_n\rightarrow X_i$ is the natural projection.
\end{df}
\begin{rem}
Notice that previous definitions can be given in a more abstract way for arbitrary categories, but we omit it for simplicity.
\end{rem}

Hence, the idea of this approach is the following: the bigger $n\in \mathbb{N}$ is, the better the term $X_n$ to approximate $X$ is. Indeed, the inverse limit of an inverse sequence where the index set is a finite totally ordered set is homeomorphic to the term indexed by the maximum. 
\begin{ex} Let us consider the Hawaiian earring, that is, 
$$\mathbb{H}=\bigcup_{n=1}^{\infty}\{(x,y)\in \mathbb{R}^2|(x-\frac{1}{n})^2+y^2=(\frac{1}{n})^2 \}.$$
We consider $X_n=\bigcup_{i=1}^n \{(x,y)\in \mathbb{R}^2|(x-\frac{1}{i})^2+y^2=(\frac{1}{i})^2 \}$. For every $n\in \mathbb{N}$ we have $X_{n}\subseteq X_{n+1}$. We also consider $p_{n,n+1}:X_{n+1}\rightarrow X_n$ given by $p_{n,n+1}(x,y)=(x,y)$ if $(x,y)\in X_n\subseteq X_{n+1}$ and $p_{n,n+1}(x,y)=(0,0)$ if $(x,y)\in X_{n+1}\setminus X_n$. We get an inverse sequence $(X_n,p_{n,n+1})$ satisfying that its inverse limit is homeomorphic to $\mathbb{H}$.  If $N$ denotes the totally ordered set $\{1,...,N\}$, then the inverse limit of $(X_n,p_{n,n+1},N)$ is $X_N$. The higher the value of $N$ is, the better the inverse limit of $(X_n,p_{n,n+1},N)$ approximates $\mathbb{H}$. We have represented this situation in Figure \ref{figure:anilloshawaianos}.
\begin{figure}[h]
\centering
\includegraphics[scale=0.75]{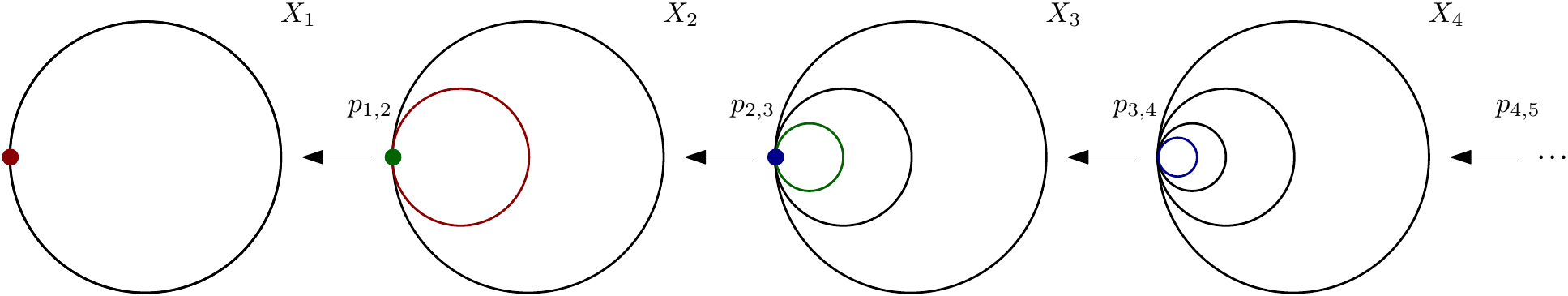}
\caption{Inverse sequence $(X_n,p_{n,n+1})$ and schematic representation of $p_{n,n+1}$.}\label{figure:anilloshawaianos}
\end{figure}
\end{ex}

As we mentioned before, polyhedra have been good candidates to get results of approximation. In \cite{clader2009inverse}, E. Clader proved that finite topological spaces can also be good candidates to approximate compact polyhedra. Namely, for every compact polyhedron $X$  there exists a natural inverse sequence of finite topological spaces such that its inverse limit contains a homeomorphic copy of $X$ which is a strong deformation retract. The idea of using finite topological spaces for this purpose was earlier suggested for instance in \cite{moron2008connectedness}, where it was introduced the so-called Main Construction and it was conjectured the General Principle. This principle states that the Main Construction can be used \textit{to extrapolate high dimensional topological properties of $X$, as in particular, the \v{C}ech homology groups in any dimension}. In \cite{mondejar2015hyperspaces,mondejar2020reconstruction}, it is proved a generalization of the result obtained in \cite{clader2009inverse} to compact metric spaces. In \cite{bilski2017inverse}, a similar result is obtained for topological spaces satisfying that are locally compact, paracompact and Hausdorff spaces, where Alexandroff spaces are considered. Throughout this manuscript, we will restrict our study to compact metric spaces and finite topological spaces. The main goal is to prove the General Principal using a different construction, which is more suitable for computational reasons.

We recall basic definitions, results and terminology for finite topological spaces. For a complete exposition about the theory of finite topological spaces see \cite{barmak2011algebraic} or \cite{may1966finite}. 

Given a finite $T_0$ topological space $X$ and $x\in X$. Let $U_x$ denote the intersection of every open set containing $x$. Analogously, let $F_x$ denote the intersection of every closed set containing $x$. Notice that $U_x$ is open and $F_x$ is closed.
\begin{df} Given a partially ordered set or poset $(X,\leq)$. A lower (upper) set $S\subset X$ is a set satisfying that if $x\in X$ and $y\leq x$ ($y\geq x$), then $y\in S$. 
\end{df}
It is not difficult to show the following two properties:
\begin{itemize}
\item For a finite poset $(X,\leq)$, the family of lower (upper) sets of $\leq$ is a $T_0$ topology on $X$, that makes $X$ a finite $T_0$ topological space.
\item For a finite $T_0$ topological space, the relation $x\leq_\tau$ y if and only if $U_x\subseteq U_y$ ($U_y\subseteq U_x$) is a partial order on $X$.
\end{itemize}
The partial order given in the second property is called the natural order, while the partial order given in parenthesis is called the opposite order.

A map $f:X\rightarrow Y$ between two posets is order-preserving if for every $x\leq x'$ in $X$, then $f(x)\leq f(x')$ in $Y$. It is easy to get the following proposition.
\begin{prop}\label{prop:continuidadpreservaorden} Let $f:X\rightarrow Y$ be a map between two finite $T_0$ topological spaces. Then $f$ is a continuous map if and only if $f$ is order-preserving.
\end{prop}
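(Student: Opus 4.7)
The plan is to exploit the equivalence, already recorded in the two bullet points above, between finite $T_0$ topologies and finite posets. Concretely, I will use two facts: first, that the open sets of a finite $T_0$ space coincide with the lower sets under the natural order $\leq_\tau$; and second, that $x\leq_\tau y$ holds if and only if $x\in U_y$, which is immediate because $U_x\subseteq U_y$ forces $x=x\in U_x\subseteq U_y$, while conversely $x\in U_y$ means $U_y$ is an open set containing $x$, hence contains $U_x$ by minimality. With these reformulations the two implications become almost formal; I treat them in turn.

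For the direct implication, assume $f$ is continuous and take $x\leq_\tau x'$ in $X$, equivalently $x\in U_{x'}$. I would apply continuity to the minimal open neighborhood $U_{f(x')}$ of $f(x')$ in $Y$: the preimage $f^{-1}(U_{f(x')})$ is an open subset of $X$ containing $x'$, so by minimality of $U_{x'}$ it contains $U_{x'}$ entirely, and in particular contains $x$. Therefore $f(x)\in U_{f(x')}$, which is precisely $f(x)\leq_\tau f(x')$.

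For the converse, assume $f$ preserves the order and let $V\subseteq Y$ be open, i.e.\ a lower set. I would show $f^{-1}(V)$ is a lower set in $X$: if $x\in f^{-1}(V)$ and $y\leq_\tau x$, then order-preservation gives $f(y)\leq_\tau f(x)\in V$, and because $V$ is a lower set $f(y)\in V$, so $y\in f^{-1}(V)$. Hence preimages of open sets are open and $f$ is continuous.

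I do not anticipate a real obstacle here: the statement is the standard dictionary between finite $T_0$ spaces and finite posets, and once the characterization of open sets as lower sets is in hand the argument reduces to unwinding definitions. The only point requiring a little care is bookkeeping the convention (natural versus opposite order, lower versus upper sets); the excerpt has already fixed the natural order, so I will consistently use the ``open $=$ lower set'' version throughout.
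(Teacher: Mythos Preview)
Your argument is correct and is the standard one; the paper itself omits the proof entirely, merely remarking that the proposition is ``easy to get,'' so there is nothing substantive to compare against.
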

From this, we deduce the following theorem. 
\begin{thm}\label{thm:TeoremaAlexandroff} The category of finite $T_0$ topological spaces and the category of finite posets are isomorphic.
\end{thm}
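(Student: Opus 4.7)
The plan is to exhibit explicit functors in both directions that are strict inverses of one another, leveraging the two bullet-point constructions already stated in the excerpt together with Proposition \ref{prop:continuidadpreservaorden}.

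First I would define the functor $F$ from finite $T_0$ spaces to finite posets by sending a space $X$ to the poset $(X,\leq_\tau)$ given by the natural order $x\leq_\tau y \iff U_x\subseteq U_y$, and sending a continuous map $f\colon X\to Y$ to itself viewed as a set map; Proposition \ref{prop:continuidadpreservaorden} guarantees that $f$ is order-preserving, so this is well-defined on morphisms, and functoriality is immediate since composition of set maps is preserved. In the opposite direction I would define $G$ by sending a finite poset $(X,\leq)$ to $X$ equipped with the topology of lower sets, and again sending each order-preserving map to itself; the same proposition, read the other way, ensures continuity, and functoriality is trivial for the same reason.

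The heart of the argument is checking that $G\circ F=\mathrm{id}$ and $F\circ G=\mathrm{id}$ on the nose (not merely up to natural isomorphism), which is what ``isomorphic'' demands. For $G\circ F$: starting from a finite $T_0$ space $X$ with topology $\tau$, I would verify that the topology $\tau'$ generated by the lower sets of $\leq_\tau$ coincides with $\tau$. One inclusion follows because each minimal open neighbourhood $U_x$ equals $\{y:y\leq_\tau x\}$, hence is a lower set, and every $\tau$-open set is a union of such $U_x$'s. For the other inclusion I would note that any lower set $S$ satisfies $S=\bigcup_{x\in S}U_x$, hence belongs to $\tau$. For $F\circ G$: starting from a finite poset $(X,\leq)$, equip $X$ with the lower-set topology and then compute the induced order $\leq_\tau$; I would verify that $U_x=\{y:y\leq x\}$ (the smallest lower set containing $x$), whence $U_x\subseteq U_y \iff x\leq y$, so $\leq_\tau\,=\,\leq$.

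Since $F$ and $G$ act as the identity on underlying sets and on underlying functions, the compositions $GF$ and $FG$ are literally identity functors once the object-level checks above are done, so no further verification on morphisms is required. The main (mild) obstacle is purely bookkeeping: one must fix a single convention between the two dual choices mentioned in the excerpt (lower sets paired with the natural order, or upper sets paired with the opposite order) and stick with it throughout, so that the two round-trip calculations line up and the identification $U_x=\{y:y\leq x\}$ is literally correct rather than off by an order reversal.
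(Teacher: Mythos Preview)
Your proposal is correct and follows exactly the approach the paper intends: the paper does not give an explicit proof but simply writes ``From this, we deduce the following theorem,'' meaning Theorem~\ref{thm:TeoremaAlexandroff} is obtained directly from the two bullet-point constructions together with Proposition~\ref{prop:continuidadpreservaorden}. Your argument is precisely the natural unpacking of that deduction, supplying the round-trip verifications that the paper leaves implicit.
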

Consequently, finite $T_0$ topological spaces and finite partially ordered sets can be seen as the same object from two different perspectives. From now on, every finite topological space satisfies the $T_0$ separation axiom. We will treat finite topological spaces and partially ordered sets as the same object without explicit mention.

Given a compact metric space $(X,d)$, we recall the Main Construction \cite{moron2008connectedness} or Finite Approximative Sequence (FAS) for $X$ \cite{mondejar2020reconstruction}.
\begin{df} Let $(X,d)$ be a compact metric space and let $\epsilon$ be a positive real number. A finite subset $A$ of $X$ is an $\epsilon$-approximation of $X$ if for every $x\in X$ there exists $a\in A$ satisfying that $d(x,a)<\epsilon$. 
\end{df}
Let $A$ be an $\epsilon$-approximation of $X$ and let $\mathcal{U}_{\epsilon}(A)=\{C\subseteq A|diam(C)<\epsilon\}$, where $diam(C)$ denotes the diameter of $C$. Then $\mathcal{U}_{\epsilon}(A)$ is a finite poset. The partial order is given by the subset relation, that is, $C\leq D$ if and only if $C\subseteq D$.

It is simple to deduce the following. If $(X,d)$ is a compact metric space, then for every $\epsilon>0$ there exists an $\epsilon$-approximation $A$ of $X$.

\begin{lem}\label{lem:mainpreparacion} Let $(X,d)$ be a compact metric space. If $\epsilon$ is a positive real value and $A$ is an $\epsilon$-approximation of $X$, then for every $0<\epsilon'<\frac{\epsilon-\gamma}{2}$ and $\epsilon'$-approximation $A'$ of $X$ the map $p:\mathcal{U}_{2\epsilon'}(A')\rightarrow \mathcal{U}_{2\epsilon}(A)$ given by $p(C)=\bigcup_{c\in C}\{a\in A|d(c,a)=d(c,A)\}$ is continuous, where $\gamma=sup\{d(x,A)|x\in X\}$.
\end{lem}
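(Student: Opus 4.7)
The plan is to invoke Proposition \ref{prop:continuidadpreservaorden}: since both $\mathcal{U}_{2\epsilon'}(A')$ and $\mathcal{U}_{2\epsilon}(A)$ are finite $T_0$ spaces whose order is set inclusion, it suffices to prove two things: (i) $p$ is well-defined, i.e.\ $p(C)\in\mathcal{U}_{2\epsilon}(A)$ for every $C\in\mathcal{U}_{2\epsilon'}(A')$, and (ii) $p$ is order-preserving, i.e.\ $C\subseteq D$ implies $p(C)\subseteq p(D)$.

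Step (ii) is immediate from the definition: $p$ is defined as a union indexed by the elements of the input, so enlarging the index set can only enlarge the union. The substantive content is (i). For the empty set this is trivial, so let $C\in\mathcal{U}_{2\epsilon'}(A')$ be nonempty and take arbitrary $a_1,a_2\in p(C)$. By definition there exist $c_1,c_2\in C$ such that $d(c_i,a_i)=d(c_i,A)$ for $i=1,2$. Because $A$ is an $\epsilon$-approximation and the function $x\mapsto d(x,A)$ is continuous on the compact space $X$, the supremum $\gamma$ is attained and satisfies $\gamma\leq\epsilon$; in particular $d(c_i,a_i)\leq\gamma$ for $i=1,2$. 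The triangle inequality then gives
\[
d(a_1,a_2)\leq d(a_1,c_1)+d(c_1,c_2)+d(c_2,a_2)\leq 2\gamma+\mathrm{diam}(C)< 2\gamma+2\epsilon'.
\]
Using the hypothesis $2\epsilon'<\epsilon-\gamma$, one concludes $d(a_1,a_2)<\gamma+\epsilon\leq 2\epsilon$, so $\mathrm{diam}(p(C))<2\epsilon$ and $p(C)\in\mathcal{U}_{2\epsilon}(A)$.

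The main obstacle, modest as it is, is choosing the right triangle-inequality chain and seeing that the hypothesis $\epsilon'<(\epsilon-\gamma)/2$ is precisely what makes the three-term bound $2\gamma+2\epsilon'$ collapse to something strictly below $2\epsilon$; one also has to confirm that the closest-point set $\{a\in A\mid d(c,a)=d(c,A)\}$ is nonempty (which is automatic since $A$ is finite) so that the union defining $p(C)$ makes sense. With (i) and (ii) established, Proposition \ref{prop:continuidadpreservaorden} yields continuity of $p$.
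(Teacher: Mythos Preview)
Your proof is correct. Note, however, that the paper does not actually include a proof of Lemma~\ref{lem:mainpreparacion}: it is stated as a recall of the Main Construction from \cite{moron2008connectedness,mondejar2020reconstruction}, and no argument is given in the text. That said, your approach matches exactly the pattern the paper uses for the analogous result, Lemma~\ref{lem:FASOpreparation}: bound the diameter of the image set via the three-term triangle inequality $d(a_1,c_1)+d(c_1,c_2)+d(c_2,a_2)$, then observe that continuity is automatic because the map is visibly order-preserving. The only cosmetic difference is that in Lemma~\ref{lem:FASOpreparation} the bound on $d(a_i,c_i)$ comes from the radius $\epsilon$ of an open ball, whereas here it comes from the nearest-point condition $d(c_i,a_i)=d(c_i,A)\leq\gamma$; your chain $2\gamma+2\epsilon'<\gamma+\epsilon\leq 2\epsilon$ is the correct adaptation.
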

As an immediate consequence of Lemma \ref{lem:mainpreparacion}, it can be obtained the so-called Main Construction or FAS for a compact metric space $X$.
\begin{prop}[Main Construction or FAS ] Let $(X,d)$ be a compact metric space. Then there exists an inverse sequence $(\mathcal{U}_{2\epsilon_n}(A_n),p_{n,n+1})$, where $\{\epsilon_n\}_{n\in \mathbb{N}}$ is a sequence of decreasing positive real values with $\lim_{n\rightarrow\infty}\epsilon_n=0$, $\{A_n\}_{n\in \mathbb{N}}$ is a sequence of $\epsilon_n$-approximations of $X$ and the bonding map $p_{n,n+1}:\mathcal{U}_{2\epsilon_{n+1}}(A_{n+1})\rightarrow \mathcal{U}_{2\epsilon_n}(A_n)$ is given by $p_{n,n+1}(C)=\bigcup_{c\in C}\{a\in A|d(c,a)=d(c,A)\}$.
\end{prop}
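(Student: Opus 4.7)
The plan is to construct the sequences $\{\epsilon_n\}$ and $\{A_n\}$ by induction, ensuring at every stage that the gap hypothesis of Lemma \ref{lem:mainpreparacion} is met so that the bonding maps are automatically continuous. Once the consecutive bonding maps $p_{n,n+1}$ are built, the full data of an inverse sequence is obtained by defining $p_{n,m}:=p_{n,n+1}\circ p_{n+1,n+2}\circ\cdots\circ p_{m-1,m}$ for $n<m$, which makes the compatibility $p_{n,m}=p_{n,l}\circ p_{l,m}$ automatic; no further verification beyond continuity is needed.

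For the base case, I would fix any $\epsilon_1>0$ and invoke the existence statement made just before Lemma \ref{lem:mainpreparacion} to produce an $\epsilon_1$-approximation $A_1$. In the inductive step, assume $\epsilon_n$ and $A_n$ have been chosen and set
\[
\gamma_n:=\sup\{d(x,A_n)\,|\,x\in X\}.
\]
Since $x\mapsto d(x,A_n)$ is continuous and $X$ compact, the supremum is attained; the strict-inequality clause in the definition of $\epsilon_n$-approximation gives $d(x,A_n)<\epsilon_n$ pointwise, so $\gamma_n<\epsilon_n$ and the interval $(0,(\epsilon_n-\gamma_n)/2)$ is nonempty. I would then choose
\[
0<\epsilon_{n+1}<\min\Bigl\{\tfrac{\epsilon_n-\gamma_n}{2},\ \tfrac{\epsilon_n}{2}\Bigr\}
\]
and pick any $\epsilon_{n+1}$-approximation $A_{n+1}$ of $X$. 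The first constraint is precisely the hypothesis of Lemma \ref{lem:mainpreparacion}, so the associated map $p_{n,n+1}:\mathcal{U}_{2\epsilon_{n+1}}(A_{n+1})\to\mathcal{U}_{2\epsilon_n}(A_n)$ given by the prescribed nearest-point formula is continuous. The second constraint forces $\epsilon_n\leq \epsilon_1/2^{n-1}$, so $\{\epsilon_n\}$ is strictly decreasing with $\lim_{n\to\infty}\epsilon_n=0$.

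There is essentially no obstacle, as Lemma \ref{lem:mainpreparacion} does all the work; the construction is a routine recursive choice. The only mildly delicate point is the strict inequality $\gamma_n<\epsilon_n$ (rather than $\leq$), which is what keeps the admissible window for $\epsilon_{n+1}$ nonempty, and which relies essentially on compactness of $X$ together with the strict inequality in the definition of an $\epsilon$-approximation.
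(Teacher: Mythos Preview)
Your proposal is correct and is precisely the approach the paper has in mind: the proposition is stated there as an immediate consequence of Lemma~\ref{lem:mainpreparacion}, and the analogous Theorem~\ref{thm:FASOconstruction} is proved by exactly the same inductive scheme you carry out. One small remark: since $\gamma_n\geq 0$, the bound $(\epsilon_n-\gamma_n)/2\leq \epsilon_n/2$ holds automatically, so your second constraint in the choice of $\epsilon_{n+1}$ is redundant---the first one alone already forces $\epsilon_n\leq \epsilon_1/2^{n-1}\to 0$.
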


One important property of this inverse sequence relies on its inverse limit.
\begin{thm}\label{thm:MainConstructionREconstruction} Let $(X,d)$ be a compact metric space and let $(\mathcal{U}_{2\epsilon_n}(A_n),p_{n,n+1})$ be a FAS for $X$. Then the inverse limit of $(\mathcal{U}_{2\epsilon_n}(A_n),p_{n,n+1})$ contains a homeomorphic copy of $X$ which is a strong deformation retract.
\end{thm}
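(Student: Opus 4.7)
The plan is to construct an embedding $i\colon X\hookrightarrow L:=\varprojlim(\mathcal{U}_{2\epsilon_n}(A_n),p_{n,n+1})$, a retraction $r\colon L\to i(X)$, and a strong deformation retract homotopy $H\colon L\times[0,1]\to L$. For the embedding, the natural candidate is the nearest-point set $C_n(x)=\{a\in A_n:d(x,a)=d(x,A_n)\}$, which lies in $\mathcal{U}_{2\epsilon_n}(A_n)$ because its elements are pairwise within $2\gamma_n<2\epsilon_n$. However, literal compatibility $p_{n,n+1}(C_{n+1}(x))=C_n(x)$ may fail at points with several equidistant neighbours, so I would instead use the stabilised tail $B_n(x)=\limsup_{m\to\infty} p_{n,m}(C_m(x))$, well-defined because $A_n$ is finite. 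A short argument shows that any $a\in B_n(x)$ satisfies $d(x,a)\le d(x,A_n)+2\gamma_m$ for arbitrarily large $m$, forcing $d(x,a)=d(x,A_n)$, so $B_n(x)\subseteq C_n(x)\in\mathcal{U}_{2\epsilon_n}(A_n)$; and a pigeonhole argument on $A_{n+1}$ yields the bonding-map compatibility $p_{n,n+1}(B_{n+1}(x))=B_n(x)$. Hence $i(x)=(B_n(x))_n\in L$. Injectivity follows from $B_n(x)=B_n(y)$ implying $d(x,y)\le 2\gamma_n\to 0$; continuity of $i$ reduces, level by level, to the continuity of $x\mapsto B_n(x)$ into the finite $T_0$ Alexandroff space $\mathcal{U}_{2\epsilon_n}(A_n)$, which holds by a local analysis of how the nearest-point sets vary as $x$ moves.

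For the retraction, the FAS inequality $\epsilon_{n+1}<(\epsilon_n-\gamma_n)/2\le\epsilon_n/2$ forces $\sum\epsilon_n<\infty$. Given $(C_n)\in L$, I choose compatibly $a_{n+1}\in C_{n+1}$ and $a_n\in p_{n,n+1}(\{a_{n+1}\})\subseteq C_n$; then $d(a_n,a_{n+1})\le\gamma_n<\epsilon_n$, so $(a_n)$ is Cauchy in $X$ with a unique limit $x\in X$ independent of the choices (because $\mathrm{diam}(C_n)\to 0$), and I set $r((C_n))=i(x)$. Continuity of $r\colon L\to i(X)\cong X$ is routine from the product topology and the diameter bounds at each level.

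The hardest step is to construct the strong deformation retract homotopy $H\colon L\times[0,1]\to L$ with $H_0=\mathrm{id}$, $H_1=i\circ r$, and $H_t\circ i=i$. My plan is to exploit the fact that in the finite $T_0$ poset $\mathcal{U}_{2\epsilon_n}(A_n)$ any two elements in the same connected component are joined by a canonical \emph{fence} (zigzag of order-comparabilities), which yields a piecewise-constant continuous homotopy between them. One shows that at each level $n$, $C_n$ and $B_n(r((C_n)))$ lie in the same fence-component, and assembles these level-wise homotopies into $H$ via a telescoping time reparametrisation $t_n\downarrow 0$, performing the level-$n$ motion on the subinterval $[t_{n+1},t_n]$ while higher-level coordinates stay momentarily constant. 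The main obstacle is guaranteeing continuity of $H$ in the product topology on $L$ and maintaining compatibility of the intermediate fence elements with the bonding maps $p_{n-1,n}$ at every moment; this is where the FAS slack $\epsilon_{n+1}<(\epsilon_n-\gamma_n)/2$ is used essentially, since it leaves enough diameter margin so that bonding-compatible fence paths exist, and the telescoping of $t_n$ then guarantees continuity of each projection $\pi_n\circ H$ on $L\times[0,1]$.
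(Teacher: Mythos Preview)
First, a framing remark: this theorem is quoted in the present paper as a background result from \cite{mondejar2015hyperspaces,mondejar2020reconstruction}; the paper does not prove it here. What the paper \emph{does} prove is the exact analogue for the FASO construction (Theorems~\ref{thm:homeo} and~\ref{thm:strongretract}), and that argument follows the same template as the original FAS proof. So I will compare your proposal against that template.

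Your embedding and retraction are broadly on the right track---in particular, defining $r$ by passing to the Hausdorff limit of the thread is exactly the map $\varphi$ used in the paper, and the pigeonhole argument for $p_{n,n+1}(B_{n+1}(x))=B_n(x)$ is fine. One local error: your reason for $B_n(x)\in\mathcal{U}_{2\epsilon_n}(A_n)$, namely $B_n(x)\subseteq C_n(x)$, is false. If some $a_{n+1}\in C_{n+1}(x)$ is equidistant from two points $a,a'\in A_n$ while $x$ itself is strictly closer to $a$, then $a'\in p_{n,n+1}(C_{n+1}(x))\setminus C_n(x)$, and this can persist in the $\limsup$. The correct bound is a direct diameter estimate: for $a\in p_{n,m}(C_m(x))$ one chains $d(x,a)\le\sum_{k\ge n}\gamma_k$, and the FAS slack $\gamma_k<\epsilon_k-2\epsilon_{k+1}$ telescopes this below $\epsilon_n$, giving $\operatorname{diam}(B_n(x))<2\epsilon_n$.

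The real gap is the homotopy. Your fence-and-telescope scheme is not an argument but an outline of obstacles you have not overcome: you yourself flag that keeping the intermediate zigzag elements compatible with \emph{all} bonding maps simultaneously is ``the main obstacle,'' and you give no mechanism for it. A fence in $\mathcal{U}_{2\epsilon_n}(A_n)$ from $C_n$ to $B_n(x)$ will in general not push down under $p_{n-1,n}$ to the fence you chose one level below, so the level-wise homotopies do not assemble into a map $L\times[0,1]\to L$.

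What you are missing is that no fence is needed at all. The engine of the argument (Proposition~\ref{prop:minimal} in the FASO case) is that the embedded thread is \emph{extremal} in its fibre: for every $(C_n)\in L$ with $\varphi((C_n))=x$, the embedded coordinate and $C_n$ are comparable in the poset order at every level $n$. Once that single comparability is established, the deformation retraction is the two-valued map
\[
H((C_n),t)=\begin{cases}(C_n),& t\in[0,1),\\ \phi(\varphi((C_n))),& t=1,\end{cases}
\]
and continuity at $t=1$ is immediate because every basic neighbourhood of $\phi(\varphi((C_n)))$ in $L$ already contains $(C_n)$ (this is exactly the proof of Theorem~\ref{thm:strongretract}). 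So the entire strong-deformation-retract step collapses to proving one order inclusion, not to building compatible paths. Your proposal never identifies or proves this extremality, and without it there is no reason your $B_n(x)$ supports such a homotopy; indeed you should check whether your $\limsup$ construction even yields the extremal thread in the fibre, since $p_{n,m}(C_m)$ for an arbitrary thread need not be contained in $p_{n,m}(C_m(x))$.
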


If we consider the opposite order on the finite $T_0$ topological spaces of a FAS for a compact metric space, then the inverse limit does not need to preserve the good properties that were obtained in Theorem \ref{thm:MainConstructionREconstruction}. From a set theoretical point of view, the inverse limit is the same independently of the partial order chosen. From a topological point of view, the topologies are different.

\begin{ex}\label{ex:FASintervalo} Let us consider the unit interval $I=[0,1]$. We consider the same FAS for $I$ that was chosen in \cite[Example 4]{mondejar2015hyperspaces}. Namely, $A_1=\{0\}$, $\epsilon_1=2$, $\epsilon_n=\frac{1}{3^{2n-3}}$ and $A_n=\{\frac{k}{3^{2n-3}}|k=0,...,3^{2n-3}\}$ for every $n\in \mathbb{N}\setminus \{1\}$. Then $\mathcal{U}_{2\epsilon_n}(A_n)=A_n\cup \{ \{\frac{k}{3^{2n-3}}, \frac{k+1}{3^{2n-3}} \}|k=0,..., 3^{2n-3}-1\}$. Let $\varphi:\mathcal{I}^u\rightarrow I$ denote the map obtained in Theorem \ref{thm:MainConstructionREconstruction}, where $\mathcal{I}^u$ denotes the inverse limit of $(\mathcal{U}_{2\epsilon_n}(A_n),p_{n,n+1})$. Since $C=(C_1,C_2,C_3,...,C_n,...)\in \mathcal{I}^u$ can be seen as a sequence in the hyperspace of $I$, denoted by $2^I$, with the Hausdorff distance, it follows that $\varphi$ is defined by sending $(C_1,C_2,C_3,...)$ to its convergent point $\{x \}\in 2^I$ where $x\in I$. If $x\in \bigcup_{n\in \mathbb{N}}A_n$, then $\varphi^{-1}(x)$ has cardinality one. If $x\in I\setminus{ \bigcup_{n\in \mathbb{N}} A_n}$, then $\varphi(x)^{-1}=\{C,D,X\}$, where $C_i,D_i\subset X_i$ for every $i$. For a complete exposition of the previous assertion, see \cite[Chapter 3]{mondejar2015hyperspaces}. If we consider the opposite partial order in every term of the inverse sequence and $x\in I\setminus{ \bigcup_{n\in \mathbb{N}} A_n}$, then $X_i<_o  C_i$ and $X_i<_o D_i$ for every $i$, where $\varphi(x)^{-1}=\{C,D,X\}$. Let $\mathcal{I}^o$ denote the inverse limit of the inverse sequence of finite topological spaces with the opposite partial order. Then, the identity map $id:\mathcal{I}^o\rightarrow \mathcal{I}^u$ is not continuous. We argue by contradiction. Consider $x=\frac{1}{2}$. Then every open neighborhood $U$ of $C\in \mathcal{I}^o$ contains $X$ because $X_i<_o C_i$ for every $i$. On the other hand, consider $V=U_{C_1}\times U_{C_2}\times \cdots \times U_{C_n}\times \mathcal{U}_{2\epsilon_{n+1}(A_{n+1})}\times \mathcal{U}_{2\epsilon_{n+2}(A_{n+2})}\times \cdots$, where $U_{C_i}$ denotes the minimal open neighborhood of $C_i\in \mathcal{U}_{2\epsilon_i(A_i)}$. We have that $V$ is an open neighborhood of $id(C)$ and does not contain $X$ because $X_i> C_i$ for every $i$, which entails a contradiction. In Figure \ref{fig:differentInverseLimits} we have a schematic description of the situation described above.  

\begin{figure}[h]
\centering
\includegraphics[scale=1.15]{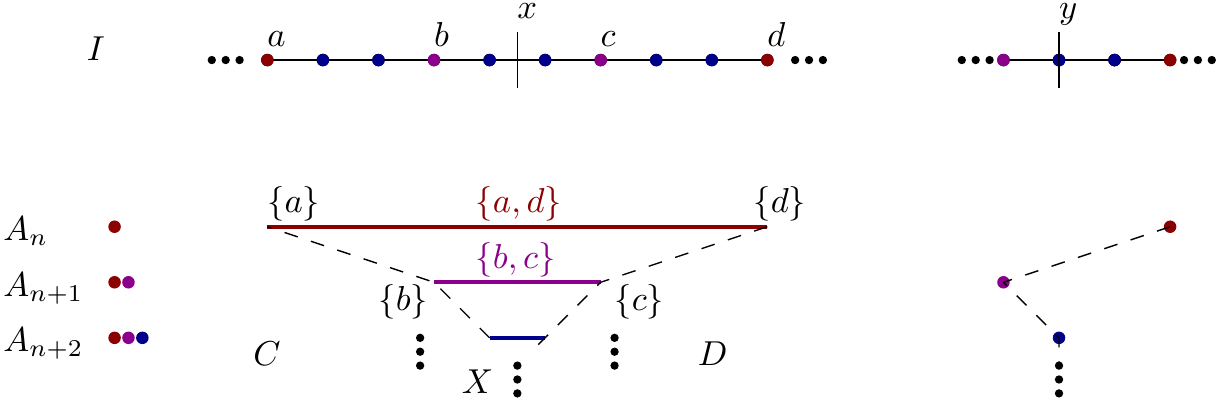}
\caption{Schematic description of $\varphi^{-1}(x)$ and $\varphi^{-1}(y)$, where $x\in I\setminus \{\bigcup_{n\in \mathbb{N}}A_n \}$ and $y\in \bigcup_{n\in \mathbb{N}}A_n$.}\label{fig:differentInverseLimits}
\end{figure}
\end{ex}

If we want to keep a similar result changing the partial order of the terms, then we need to get a different construction. Moreover, constructions of finite topological spaces involving the opposite partial order have been used recently to find applications to the study of dynamical systems, see for example \cite{lipinski2019conley}. The inverse sequence of finite topological spaces obtained in \cite{clader2009inverse} also uses the opposite order.

The organization of the paper is as follows. In Section \ref{section:constructionFASO} we construct an analogous of the Main Construction for a compact metric space using the opposite partial order. This construction has more advantages from a computational viewpoint. The different stages are implemented in classical algorithms or algorithms used to calculate persistent homology. A study about this issue is done in Section \ref{section:algoritmosComparativa} and two computational examples are also provided. In Section \ref{sec:inverseLimitProperties} the properties of the inverse limit of our inverse sequence are studied. It is proved that the inverse limit reconstructs the homotopy type. Then a result of uniqueness is given in Section \ref{sec:uniqueness}. This result can be seen as a sort of robustness. We also give an alternative inverse sequence that reconstructs algebraic invariants. This inverse sequence is more suitable for computational reasons and answers positively the General Principle \cite{moron2008connectedness}. In addition, we study the relations of our inverse sequences with the Main Construction and the construction given in \cite{clader2009inverse}. For completeness we have included at the end of this manuscript a brief appendix that contains basic definitions and results about pro-categories.

\section{Finite approximative sequences with the opposite order}\label{section:constructionFASO}
In this section, given a compact metric space $(X,d)$, we construct an inverse sequence of finite $T_0$ topological spaces using the opposite order instead of the natural order used in \cite{moron2008connectedness} and \cite{mondejar2020reconstruction}. From now on, if there is no explicit mention of the partial order considered on $\mathcal{U}_{4\epsilon}(A)$, where $\epsilon$ is a positive real value and $A$ is an $\epsilon$-approximation of $X$, then it is considered the partial order given as follows: $C\leq D$ if and only $D\subseteq C$. Let $\mathcal{B}(x,\epsilon)$ denote the open ball of center $x\in X$ and radius $\epsilon$.
\begin{lem}\label{lem:FASOpreparation} Given a compact metric space $(X,d)$. If $\epsilon$ is a positive real value and $A$ is an $\epsilon$-approximation of $X$, then for every $\epsilon'<\frac{\epsilon-\gamma}{2}$ and every $\epsilon'$-approximation $A'$ of $X$ the map $q:\mathcal{U}_{4\epsilon'}(A')\rightarrow \mathcal{U}_{4\epsilon}(A)$ given by $q(C)=\bigcup_{x\in C}\mathcal{B}(x,\epsilon)\cap A$ is well-defined and continuous, where $\gamma =sup\{ d(x,A)|x\in X\}$. 
\end{lem}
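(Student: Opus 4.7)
The plan is to verify the two assertions separately: first that $q(C)$ really lies in $\mathcal{U}_{4\epsilon}(A)$ (well-defined) and second that $q$ is continuous. Continuity will be reduced to order-preservation via Proposition \ref{prop:continuidadpreservaorden}, so the whole lemma amounts to a set-theoretic check plus one diameter estimate.

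For well-definedness, given $C \in \mathcal{U}_{4\epsilon'}(A')$ I need $q(C)$ to be a nonempty subset of $A$ with $\mathrm{diam}(q(C)) < 4\epsilon$. Containment in $A$ is built into the definition. Nonemptiness follows from the $\epsilon$-approximation property: for any $x \in C$ one has $d(x,A) \leq \gamma < \epsilon$, so some $a \in A$ satisfies $a \in \mathcal{B}(x,\epsilon)\cap A \subseteq q(C)$. For the diameter bound, take $a, b \in q(C)$ with witnesses $x, y \in C$ such that $d(a,x) < \epsilon$ and $d(b,y) < \epsilon$, and apply the triangle inequality:
\[
d(a,b) \;\leq\; d(a,x) + d(x,y) + d(y,b) \;<\; \epsilon + 4\epsilon' + \epsilon \;=\; 2\epsilon + 4\epsilon'.
\]
The hypothesis $\epsilon' < (\epsilon-\gamma)/2 \leq \epsilon/2$ gives $4\epsilon' < 2\epsilon$, so $d(a,b) < 4\epsilon$, proving $q(C) \in \mathcal{U}_{4\epsilon}(A)$.

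For continuity, I would invoke Proposition \ref{prop:continuidadpreservaorden} and show that $q$ is order-preserving. Under the opposite order used in this section, $C \leq D$ means $D \subseteq C$ in both the source and target posets. Thus I must check: whenever $D \subseteq C$ in $\mathcal{U}_{4\epsilon'}(A')$, then $q(D) \subseteq q(C)$ in $\mathcal{U}_{4\epsilon}(A)$. This is straightforward from the union definition: any $a \in q(D)$ lies in $\mathcal{B}(x,\epsilon)\cap A$ for some $x \in D$, and $x \in D \subseteq C$ forces $a \in q(C)$.

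There is no real obstacle; the only quantitative content is the triangle-inequality estimate, and the only conceptual subtlety is keeping track of the opposite-order convention when reading off order-preservation. It is worth observing that the proof above only uses $\epsilon' < \epsilon/2$, which is weaker than the stated hypothesis $\epsilon' < (\epsilon-\gamma)/2$; the stronger bound will presumably be needed downstream, when assembling the inverse sequence, to ensure that the successive approximations can actually be chosen inside their respective error tolerances.
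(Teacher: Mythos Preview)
Your proof is correct and follows essentially the same route as the paper: the diameter estimate via the triangle inequality is identical, and where the paper simply says ``the continuity of $q$ follows trivially'' you spell out the order-preservation argument. Your extra check that $q(C)$ is nonempty and your closing observation that only $\epsilon' < \epsilon/2$ is actually used are both accurate additions---the latter is exactly the content of the paper's Remark~\ref{rem:Apaño}.
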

\begin{proof}
By compactness, $\gamma$ exists. We check that $q$ is well-defined. Let us take $C\in \mathcal{U}_{4\epsilon'}(A')$, which implies that $diam(C)<4\epsilon'$. If $x,y\in q(C)$, then there exist $c_x,c_y\in C$ satisfying that $x\in \mathcal{B}(c_x,\epsilon)$ and $y\in \mathcal{B}(c_y,\epsilon)$. Therefore, we have
$$d(x,y)\leq d(x,c_x)+d(c_x,c_y)+d(c_y,y)<\epsilon+4\epsilon'+\epsilon<2\epsilon+2(\epsilon-\gamma)<4\epsilon, $$
which implies that $diam(q(C))<4\epsilon$. The continuity of $q$ follows trivially.
\end{proof}
\begin{rem}\label{rem:Apaño} For simplicity, in Lemma \ref{lem:FASOpreparation}, we can consider $\epsilon'<\frac{\epsilon}{2}$ and the result also holds true.
\end{rem}

From here, we can get the desired construction.
\begin{thm}\label{thm:FASOconstruction} Let $(X,d)$ be a compact metric space. Then there exists an inverse sequence $(\mathcal{U}_{4\epsilon_n}(A_n),q_{n,n+1})$, where $\{\epsilon_n\}_{n\in \mathbb{N}}$ is a sequence of decreasing positive real values satisfying that $\lim_{n\rightarrow \infty}\epsilon_n=0$, $\{A_n \}_{n\in\mathbb{N}}$ is a sequence of $\epsilon_n$-approximations of $X$ and the map $q_{n,n+1}:\mathcal{U}_{4\epsilon_{n+1}}(A_{n+1})\rightarrow \mathcal{U}_{4\epsilon_n}(A_n)$ is given by $q_{n,n+1}(C)=\bigcup_{x\in C}\mathcal{B}(x,\epsilon_n)\cap A_n$.
\end{thm}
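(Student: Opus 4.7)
The plan is to build the sequences $\{\epsilon_n\}_{n\in\mathbb{N}}$ and $\{A_n\}_{n\in\mathbb{N}}$ recursively, appealing to Lemma \ref{lem:FASOpreparation} at each step to guarantee that the bonding maps are well-defined and continuous. Since Lemma \ref{lem:FASOpreparation} already encodes the technical content, the theorem essentially reduces to an inductive bookkeeping of parameters, plus defining $q_{n,m}$ for non-adjacent indices as iterated compositions.

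For the base case, I would fix any $\epsilon_1>0$ and use the compactness (hence total boundedness) of $X$ to produce an $\epsilon_1$-approximation $A_1\subseteq X$. For the inductive step, assuming $\epsilon_n$ and $A_n$ have been chosen, I would pick
\[
\epsilon_{n+1}<\min\left\{\tfrac{\epsilon_n}{2},\tfrac{1}{n}\right\},
\]
and then take $A_{n+1}$ to be any $\epsilon_{n+1}$-approximation of $X$, which again exists by compactness. The first bound activates Remark \ref{rem:Apaño} and the second forces $\epsilon_n\to 0$ while keeping the sequence decreasing.

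With these choices, Lemma \ref{lem:FASOpreparation} (together with Remark \ref{rem:Apaño}) directly yields that the prescribed formula $q_{n,n+1}(C)=\bigcup_{x\in C}\mathcal{B}(x,\epsilon_n)\cap A_n$ gives a well-defined continuous map $\mathcal{U}_{4\epsilon_{n+1}}(A_{n+1})\to \mathcal{U}_{4\epsilon_n}(A_n)$. To obtain a bona fide inverse sequence, I would then set $q_{n,m}:=q_{n,n+1}\circ q_{n+1,n+2}\circ\cdots\circ q_{m-1,m}$ for $n<m$ (and $q_{n,n}=\mathrm{id}$), so that the required compatibility $q_{n,m}=q_{n,l}\circ q_{l,m}$ for $n\leq l\leq m$ holds by construction.

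The only point that could be considered subtle is checking that the hypothesis of Lemma \ref{lem:FASOpreparation} can actually be met at every stage: one would need to ensure $\gamma_n=\sup\{d(x,A_n)\mid x\in X\}<\epsilon_n$ so that $(\epsilon_n-\gamma_n)/2$ is strictly positive. This is not really an obstacle, because the continuous function $x\mapsto d(x,A_n)$ attains its supremum on the compact space $X$, and $A_n$ being an $\epsilon_n$-approximation forces that attained value to be strictly less than $\epsilon_n$. In any case, the simplification provided by Remark \ref{rem:Apaño} bypasses this entirely: the single condition $\epsilon_{n+1}<\epsilon_n/2$ suffices, so the induction goes through without difficulty.
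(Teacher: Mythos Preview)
Your proof is correct and follows the same inductive scheme as the paper: choose $\epsilon_1$ and $A_1$, then repeatedly invoke Lemma~\ref{lem:FASOpreparation} to produce $\epsilon_{n+1}$, $A_{n+1}$, and the bonding map $q_{n,n+1}$. The only substantive difference is that the paper takes $\epsilon_1>\operatorname{diam}(X)$ with $A_1$ a singleton and retains the sharper bound $\epsilon_{n+1}<(\epsilon_n-\gamma_n)/2$ at every step rather than passing to Remark~\ref{rem:Apaño}; since the sequence ``obtained in the proof'' is what the paper then \emph{defines} to be a FASO, and since the inequalities $\epsilon_{n+1}<\tfrac{\epsilon_n-\gamma_n}{2}$ are used repeatedly in Section~\ref{sec:inverseLimitProperties} (e.g.\ Lemma~\ref{lem:contenidos}, Propositions~\ref{prop:minimal} and~\ref{prop:phiInjective}), you should record the $\gamma_n$ version---which you already justified via the attained-supremum argument---rather than the weaker $\epsilon_n/2$ bound.
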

\begin{proof}
Let $\epsilon_1>diam(X)$. Then $A_1$ can be taken as $A_1=\{ a\}$ for some $a\in X$ and $\gamma_1=\sup\{d(x,A_1)|x\in X\}$. Applying Lemma \ref{lem:FASOpreparation}, we can obtain $\epsilon_2<\frac{\epsilon_1-\gamma_1}{2}$, an $\epsilon_2$-approximation $A_2$ of $X$ and a continuous map $q_{1,2}:\mathcal{U}_{4\epsilon_2}:(A_2)\rightarrow \mathcal{U}_{4\epsilon_1}(A_1)$. We only need to repeat this method inductively to conclude.
\end{proof}

Given a compact metric space $(X,d)$. The inverse sequence $(\mathcal{U}_{4\epsilon_n}(A_n),q_{n,n+1})$ obtained in the proof of Theorem \ref{thm:FASOconstruction} is called Finite Approximative Sequence with Opposite order (FASO). For simplicity, when there is no confusion, $(\mathcal{U}_n,q_{n,n+1})$ denotes $(\mathcal{U}_{4\epsilon_n}(A_n),q_{n,n+1})$.

\begin{ex}\label{ex:FASoS1primeraparte} We construct a FASO for the unit circle $S^1$. We consider the unit circle in the complex plane with the geodesic distance, $S^1=\{ z\in \mathbb{C}| |z|=1\}$. We get a FASO for $S^1$ by steps.

\underline{\textit{Step 1.}} We consider $\epsilon_1=3\pi>diam(S^1)=\pi$ and $A_1=\{e^{2\pi i} \}$, which is clearly an $\epsilon_1$-approximation of $S^1$. Then, $\gamma_1=\pi$ and $\mathcal{U}_1=A_1$.

 \underline{\textit{Step 2.}} We consider $\epsilon_2=\frac{\pi}{2}<\frac{3\pi-\pi}{2}$ and $A_2=\{a_k^2 =e^{\frac{2\pi ki}{4}}|k=0,1,2,3\}$, which is clearly an $\epsilon_2$-approximation of $S^1$. Then, $\gamma_2=\frac{\pi}{4}$ and $\mathcal{U}_2=\{2^{A_2} \}$, where $2^{A_n}$ denotes the power set of $S$ minus the empty set. 

\underline{\textit{Step 3.}} We consider $\epsilon_3=\frac{\pi}{16}<\frac{\pi}{8}$ and $A_3=\{a_k^3=e^{\frac{2\pi k i }{32}} | k=0,1,...,31\}$, which is clearly an $\epsilon_3$-approximation. Then, $\gamma_3=\frac{\pi}{32}$ and $\mathcal{U}_3=\{ 2^{a_i^3,a_{i+1}^3,a_{i+2}^3,a_{i+3}^3}\}_{i=0,...,31}$, where the subindices are considered modulo $32$ and $2^{a_i^3,a_{i+1}^3,a_{i+2}^3,a_{i+3}^3}$ denotes the power set of $\{a_i^3,a_{i+1}^3,a_{i+2}^3,a_{i+3}^3\}$ minus the empty set. The last statement is true due to the fact that $d(a_i^3,a_{i+1}^3)=\frac{ \pi}{16}$ for every $i,i+1$ modulo $32$ and $4\epsilon_3=\frac{\pi}{4}$. 

\underline{\textit{Step $n$.}} We consider $\epsilon_n=\frac{\pi}{2^{3n-5}}$ and $A_n=\{a^n_k=e^{\frac{2\pi k i }{2^{3n-4}}}|k=0,...,2^{3n-4}-1 \}$, which is clearly an $\epsilon_n$-approximation of $S^1$. Therefore, we obtain that $\gamma_n=\frac{\pi}{2^{3n-4}}$ and $\mathcal{U}_n=\{ 2^{a_i^n,a_{i+1}^n,a_{i+2}^n,a_{i+3}^n}\}_{i=0,...,2^{3n-4}-1 }$, where the subindices are considered modulo $2^{3n-4}$ and $2^{a_i^n,a_{i+1}^n,a_{i+2}^n,a_{i+3}^n}$ denotes the power set of $\{a_i^n,a_{i+1}^n,a_{i+2}^n,a_{i+3}^n\}$ minus the empty set. The last statement is true due to the fact that $d(a_i^n,a_{i+1}^n)=\frac{\pi}{2^{3n-5}}$ for every $i,i+1$ modulo $2^{3n-4}$ and $4\epsilon_n=\frac{\pi}{2^{3n-7}}$.

For a schematic representation of the minimal points of $\mathcal{U}_1$, $\mathcal{U}_2$ and $\mathcal{U}_3$, see Figure \ref{fig:CirculoFASOminimales}. Each arc represents a minimal point. Red, green and blue arcs represent the minimal points of $\mathcal{U}_1$, $\mathcal{U}_2$ and $\mathcal{U}_3$, respectively.

\begin{figure}[h]
\centering
\includegraphics[scale=0.7]{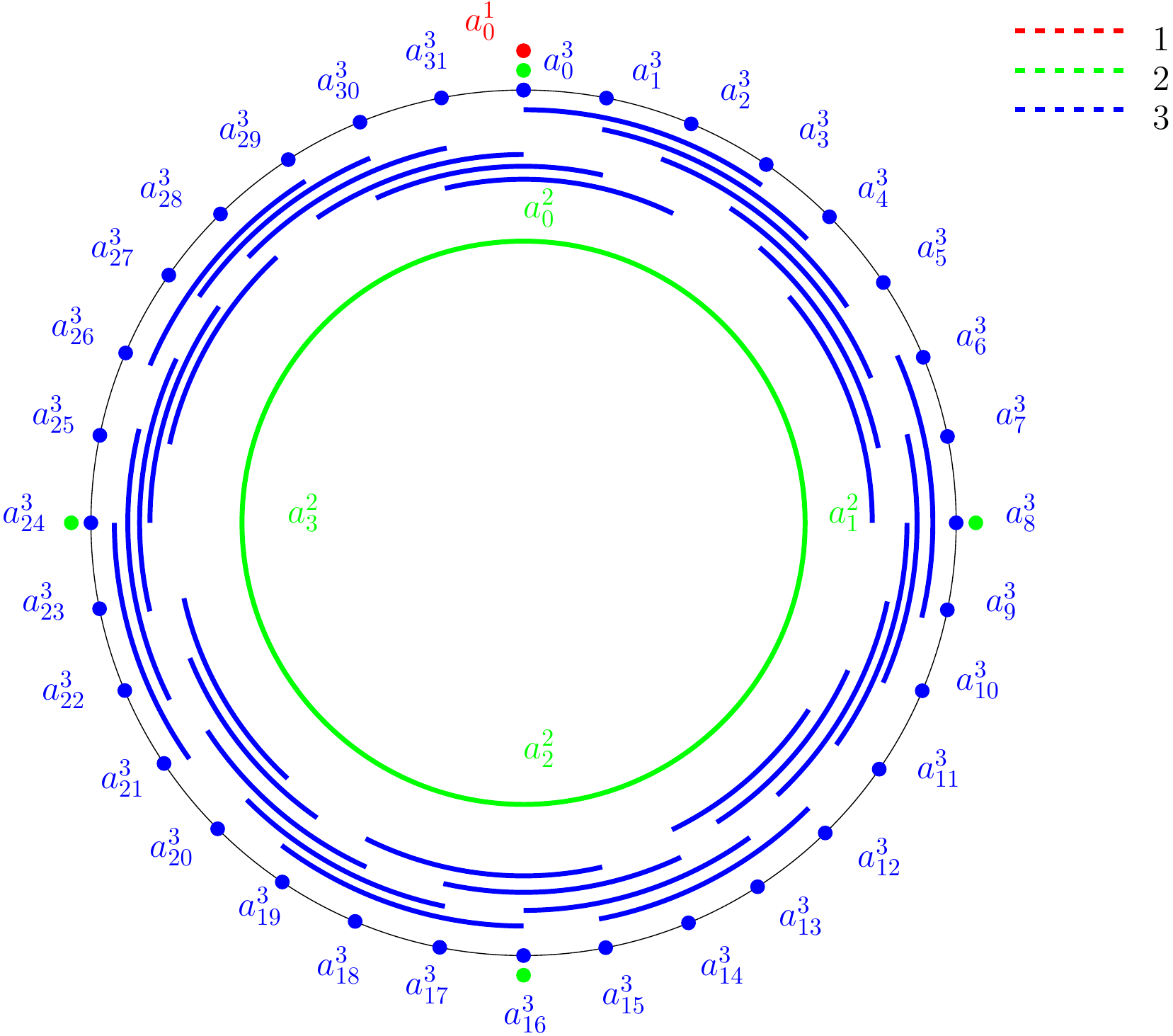}
\caption{Diagram of the minimal points of $\mathcal{U}_1$, $\mathcal{U}_2$ and $\mathcal{U}_3$ in $S^1$. } \label{fig:CirculoFASOminimales}
\end{figure}
\end{ex}

Given a compact metric space $(X,d)$ and a FASO $(\mathcal{U}_n,q_{n,n+1})$ for $X$, there is natural map $q_{n}:X\rightarrow \mathcal{U}_n$ given by $q_n(x)=\mathcal{B}(x,\epsilon_n)\cap A_n$ for every $n\in \mathbb{N}$. 

\begin{prop}\label{prop:contprojection} Given a compact metric space $(X,d)$ and a FASO $(\mathcal{U}_n, q_{n,n+1})$ for $X$. The following diagram commutes up to homotopy for every $n\in \mathbb{N}$.
\[
  \begin{tikzcd}[row sep=large,column sep=huge]
& X \arrow[dl,"q_n"'] \arrow{dr}{q_{n+1}} &\\
\mathcal{U}_{4\epsilon_n}(A_n) &  &  \mathcal{U}_{4\epsilon_{n+1}}(A_{n+1}) \arrow{ll}{q_{n,n+1}}
    \end{tikzcd} 
\]
\end{prop}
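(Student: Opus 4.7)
The plan is to produce a continuous map $H:X\to\mathcal{U}_n$ that dominates both $q_n$ and $q_{n,n+1}\circ q_{n+1}$ in the poset order on $\mathcal{U}_n$, and then invoke the standard fact that any two continuous maps into a finite $T_0$ space which are pointwise comparable are homotopic. The natural candidate for the dominator is the union
$$H(x) = q_n(x)\cup q_{n,n+1}(q_{n+1}(x))\subseteq A_n.$$

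First I would verify that $H(x)\in\mathcal{U}_n$, i.e., that $\mathrm{diam}(H(x))<4\epsilon_n$. Every element of $q_n(x)$ is within $\epsilon_n$ of $x$ by definition; every element of $q_{n,n+1}(q_{n+1}(x))$ lies in $\mathcal{B}(c,\epsilon_n)$ for some $c\in q_{n+1}(x)\subseteq\mathcal{B}(x,\epsilon_{n+1})$, so it lies within $\epsilon_n+\epsilon_{n+1}<2\epsilon_n$ of $x$ (using $\epsilon_{n+1}<\epsilon_n$). Hence $H(x)\subseteq\mathcal{B}(x,2\epsilon_n)\cap A_n$, whose diameter is strictly less than $4\epsilon_n$.

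Next I would establish the continuity of $H$. Because $q_n(x)$ is a finite subset of $A_n$ with every point strictly inside $\mathcal{B}(x,\epsilon_n)$, the quantity $\delta_n := \epsilon_n-\max_{a\in q_n(x)} d(a,x)$ is positive; the analogous $\delta_{n+1}$ is positive for $q_{n+1}(x)$. For every $x'$ with $d(x,x')<\min(\delta_n,\delta_{n+1})$ the triangle inequality gives $q_n(x)\subseteq q_n(x')$ and $q_{n+1}(x)\subseteq q_{n+1}(x')$. Since $q_{n,n+1}(C)=\bigcup_{c\in C}\mathcal{B}(c,\epsilon_n)\cap A_n$ is manifestly monotone with respect to set inclusion on its input, this forces $q_{n,n+1}(q_{n+1}(x))\subseteq q_{n,n+1}(q_{n+1}(x'))$ and therefore $H(x)\subseteq H(x')$. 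Recalling that the minimal open neighborhood of $C$ in the poset order $C\leq D\iff D\subseteq C$ is $U_C=\{D\in\mathcal{U}_n:C\subseteq D\}$, this is precisely the condition $H(x')\in U_{H(x)}$ that continuity of a map into a finite $T_0$ space demands. The same check shows that $q_n$ and $q_{n+1}$ are themselves continuous.

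By construction $q_n(x)\subseteq H(x)$ and $q_{n,n+1}(q_{n+1}(x))\subseteq H(x)$, which in the poset order reads $H(x)\leq q_n(x)$ and $H(x)\leq q_{n,n+1}(q_{n+1}(x))$ pointwise. The standard fact that pointwise comparable continuous maps into a finite $T_0$ space are homotopic (via the homotopy that is constantly the smaller map on $[0,1)$ and jumps to the larger map at $t=1$, with continuity guaranteed because open sets are lower sets) yields $H\simeq q_n$ and $H\simeq q_{n,n+1}\circ q_{n+1}$, hence $q_n\simeq q_{n,n+1}\circ q_{n+1}$, which is exactly the homotopy commutativity of the triangle.

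The main technical obstacle is the continuity of $H$: it requires the three ingredients (local ``expansion'' of $q_n$, local expansion of $q_{n+1}$, and the set-inclusion monotonicity of $q_{n,n+1}$) to cooperate so that $H$ only grows under a small perturbation of the input. Once this local growth property is verified, the rest is a routine application of the homotopy lemma for maps into finite posets.
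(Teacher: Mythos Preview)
Your proposal is correct and follows essentially the same approach as the paper: define the union map $h(x)=q_n(x)\cup q_{n,n+1}(q_{n+1}(x))$, check that its values have diameter $<4\epsilon_n$, verify continuity via the local ``expansion'' property of $q_n$ and $q_{n+1}$, and then use pointwise comparability to obtain the homotopies. Your diameter estimate (bounding everything inside $\mathcal{B}(x,2\epsilon_n)$) is in fact slightly cleaner than the paper's, and where the paper writes out the explicit linear-in-$t$ homotopy you simply cite the standard comparability lemma, but the underlying argument is the same.
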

\begin{proof}
We prove that $q_n$ is continuous and well-defined for every $n\in \mathbb{N}$. If $x\in X$, then the diameter of $q_n(x)$ is less than $2\epsilon_n$. This implies that $q_n(x)\in \mathcal{U}_{4\epsilon_n}(A_n)$. Now, we prove the continuity of $q_n$. We consider $\gamma=\max\{d(x,b)|b\in q_n(x) \}$ and $0<\delta<\epsilon_n-\gamma$. We have that $\gamma$ is well-defined since $q_n(x)$ is a finite set. For every $y\in \mathcal{B}(x,\delta)$ we have that $q_n(x)\subseteq q_n(y)$. We prove the last assertion. If $b\in q_n(x)$, then we get $d(x,b)\leq \gamma$. Therefore, $$d(y,b)<d(y,x)+d(x,b)<\epsilon_n-\gamma+\gamma=\epsilon_n.$$
From this, the continuity of $q_n$ follows easily since we have that $q_n(\mathcal{B}(x,\delta))\subseteq U_{q_n(x)}$, where $U_{q_n(x)}$ denotes the minimal open neighborhood of $q_n(x)$.

We define $h:X\rightarrow \mathcal{U}_n$ given by $h(x)=q_n(x)\cup q_{n,n+1}(q_{n+1}(x))$. We prove that $h(x)$ has diameter less than $4\epsilon_n$. Let us take $a\in q_n(x)$ and $b\in q_{n,n+1}(q_{n+1}(x))$. By construction, $d(a,x)<\epsilon_n$ and there exist $C\in \mathcal{U}_{4\epsilon_{n+1}}(A_{n+1})$ and $c\in C$ such that $c\in q_{n+1}(x)=C$ and $b\in q_{n,n+1}(c)$, which implies that $d(x,c)<\epsilon_{n+1}<\frac{\epsilon_n}{2}$ and $d(c,b)<2\epsilon_n$. We have
$$d(a,b)<d(a,x)+d(x,c)+d(c,b)<\epsilon_n+\frac{\epsilon_n}{2}+2\epsilon_n<4\epsilon_n. $$
Therefore, $h=q_n \cup q_{n,n+1}\circ q_{n+1}:X\rightarrow\mathcal{U}_{4\epsilon_n}(A_n)$ is well-defined. The continuity of $h$ follows trivially. In addition, for every $x\in X$ we get that $q_n(x),q_{n,n+1}(q_{n+1}(x))\subseteq q_n(x)\cup q_{n,n+1}(q_{n+1}(x))$. We show that the diagram commutes up to homotopy. We only prove one of the two homotopies because the other one is similar. We define $H:X\times [0,1]\rightarrow \mathcal{U}_{4\epsilon_n}(A_n)$ given by
\[
  H(x,t) =
  \begin{cases}
                               q_n(x) \cup q_{n,n+1}(q_{n+1}(x)) & \text{if $t\in[0,1)$}      \\
                       q_n(x) & \text{if $t=1$}.            
  \end{cases}
\]
It suffices to verify the continuity of $H$ at $t=1$. If $(x,1)\in X\times[0,1]$, then we consider the minimal open neighborhood of $H(x,1)$, that is, $U_{H(x,1)}$. By the continuity of $q_n$, there exists an open neighborhood $V$ of $x$ with $H(V,1)\subset U_{H(x,1)}$. By construction, if $t\neq 1$, then we have $H(V,t)=h(V)\supset H(V,1)=q_n(V)$ so $h(y)\in U_{q_{n}(x)}$ for every $y\in V$. Therefore, $V\times  [0,1]$ is an open neighborhood of $(x,1)$ in $X\times [0,1]$ satisfying $H(V,[0,1])\subset U_{H(x,1)}$, which implies the continuity of $H$ at $(x,1)$.
\end{proof} %

Given a compact metric space $(X,d)$ and a FASO $(\mathcal{U}_n,q_{n,n+1})$ for $X$. If we consider the other possible partial order defined on every term of the inverse sequence $(\mathcal{U}_n,q_{n,n+1})$, then the bonding maps are also continuous, but Proposition \ref{prop:contprojection} does not hold true. This is due to the continuity of the map $q_n:X\rightarrow \mathcal{U}_n$. In fact, we have the following result.
\begin{prop} Given a connected compact metric space $(X,d)$ and a finite topological space $Y$. If $f:X\rightarrow Y$ is continuous and $f$ is also continuous when it is considered the other possible order on $Y$, then $f$ is the constant map.
\end{prop}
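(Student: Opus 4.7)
The plan is to exploit the duality between the natural order and the opposite order on $Y$: open sets for one order are precisely the closed sets for the other. If $f$ is continuous with respect to both orders on $Y$, then preimages of minimal neighborhoods become clopen subsets of $X$, and the connectedness hypothesis forces $f$ to be constant.

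More precisely, I first record the set-theoretic duality on the finite $T_0$ space $Y$. For the natural order the minimal open neighborhood $U_y$ is a lower set and the minimal closed set $F_y$ is an upper set; passing to the opposite order swaps lower and upper sets, hence swaps open and closed sets. In particular, $U_y$ is closed in the opposite-order topology and $F_y$ is open in it. Assuming $f:X\to Y$ is continuous in both orders, the set $f^{-1}(U_y)$ is open in $X$ by natural-order continuity, and closed in $X$ by opposite-order continuity applied to the opposite-order-closed set $U_y$. Hence $f^{-1}(U_y)$ is clopen for every $y\in Y$, and by connectedness of $X$ it is either empty or all of $X$. The analogous argument applied to $F_y$ shows that $f^{-1}(F_y)$ is clopen as well.

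To finish, I pick any $x_0\in X$ and set $y_0:=f(x_0)$. Since $f^{-1}(U_{y_0})$ is a nonempty clopen subset of $X$ it must equal $X$, so $f(X)\subseteq U_{y_0}$; the same argument with $F_{y_0}$ gives $f(X)\subseteq F_{y_0}$. Because $Y$ is $T_0$, one has $U_{y_0}\cap F_{y_0}=\{y_0\}$, so $f\equiv y_0$. The proof is essentially mechanical once the order duality is recognized, so I do not expect any real obstacle beyond carefully verifying that swapping the partial order does indeed swap the topology's open and closed sets, which is immediate from the Alexandroff correspondence (Theorem~\ref{thm:TeoremaAlexandroff}).
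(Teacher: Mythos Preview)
Your proof is correct and rests on the same observation as the paper's: the minimal open neighborhood $U_{y_0}$ in one order is the minimal closed set $F_{y_0}$ in the other, and $U_{y_0}\cap F_{y_0}=\{y_0\}$ by $T_0$. The only difference is packaging---the paper pulls back the two minimal open neighborhoods locally to conclude that $f$ is locally constant and then invokes connectedness, whereas you phrase the same thing globally by noting that $f^{-1}(U_{y_0})$ and $f^{-1}(F_{y_0})$ are clopen.
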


\begin{proof}
Let us consider $x\in X$ and the minimal open neighborhood containing $f(x)$ for the natural order and opposite order, that is, $U_{f(x)}$ and $U_{f(x)}'$ respectively. By the continuity of $f$, there exist open sets $V_x$ and $W_x$ containing $x$ such that $f(V_x)\subseteq U_{f(x)}$ and $f(W_x)\subseteq U_{f(x)}'$. Therefore, $f(V_x\cap W_x)\subseteq U_{f(x)}\cap U_{f(x)}'=\{f(x) \}$, which implies that $f$ is a locally constant map. Since $X$ is connected, it follows that $f$ is a constant map.
\end{proof}
\section{Properties of the inverse limit of a FASO for a compact metric space}\label{sec:inverseLimitProperties}

Given a compact metric space $(X,d)$ and a FASO $(\mathcal{U}_n,q_{n,n+1})$ for $X$. We study properties of the inverse limit of $(\mathcal{U}_n,q_{n,n+1})$, denoted by $\mathcal{X}$. Firstly, we prove that $\mathcal{X}$ is non-empty. Despite the fact that this result can be deduced from \cite[Theorem 2]{stone1979inverse}, we prefer to describe specific elements of $\mathcal{X}$.

For every $x\in X$ we consider 
$$X_*^n=\bigcup_{m>n} q_{n,m}(A_m(x)), $$
where $A_m(x)=\{a\in A_m|d(x,a)=d(x,A_m) \}$. The sequence $\{X_*^n\}_{n\in \mathbb{N}}$ is a candidate to be an element of $\mathcal{X}$.

\begin{prop}\label{prop:X*finite} 
If $x\in X$, then $X_*^n\in \mathcal{U}_n$ for every $n\in \mathbb{N}$.
\end{prop}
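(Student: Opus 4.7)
The plan is to show two things: $X_*^n$ lies inside $A_n$ (hence is finite), and $\mathrm{diam}(X_*^n) < 4\epsilon_n$. The first is immediate since each $q_{n,m}(A_m(x))$ is a subset of $A_n$ by construction of the bonding maps. The main content is controlling the diameter, for which I would prove the stronger pointwise estimate $X_*^n \subseteq \mathcal{B}(x, 2\epsilon_n) \cap A_n$; then the triangle inequality gives $\mathrm{diam}(X_*^n) \le 4\epsilon_n$ (and in fact strict inequality since $A_n$ is finite).

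First I would unpack the definition of $q_{n,m}$ as the iterated composition $q_{n,n+1}\circ q_{n+1,n+2}\circ\cdots\circ q_{m-1,m}$, where each step is $q_{l,l+1}(C)=\bigcup_{c\in C}\mathcal{B}(c,\epsilon_l)\cap A_l$. From this, any element $a\in q_{n,m}(A_m(x))$ can be traced back to a chain $c_m,c_{m-1},\dots,c_n=a$ with $c_l\in A_l$, $c_m\in A_m(x)$ and $d(c_l,c_{l+1})<\epsilon_l$ for $l=n,\dots,m-1$. Since $c_m\in A_m(x)$ satisfies $d(x,c_m)=d(x,A_m)<\epsilon_m$, the triangle inequality yields
\[
d(x,a)\;\le\;d(x,c_m)+\sum_{l=n}^{m-1}d(c_{l+1},c_l)\;<\;\sum_{l=n}^{m}\epsilon_l.
\]

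Next I would invoke the decay built into the FASO construction, namely $\epsilon_{l+1}<\epsilon_l/2$ (which follows from $\epsilon_{l+1}<(\epsilon_l-\gamma_l)/2<\epsilon_l/2$, consistent with Remark \ref{rem:Apaño}). This gives $\epsilon_l<\epsilon_n\cdot 2^{-(l-n)}$ for every $l\ge n$, so the geometric series estimate
\[
\sum_{l=n}^{m}\epsilon_l\;<\;\epsilon_n\sum_{j=0}^{\infty}2^{-j}\;=\;2\epsilon_n
\]
applies. Therefore $d(x,a)<2\epsilon_n$ for every $a\in q_{n,m}(A_m(x))$, uniformly in $m>n$, and hence $X_*^n\subseteq \mathcal{B}(x,2\epsilon_n)\cap A_n$.

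Finally, for any $a,b\in X_*^n$ the triangle inequality gives $d(a,b)<4\epsilon_n$, and since $X_*^n\subseteq A_n$ is a finite set, the diameter (a maximum of finitely many strict inequalities) satisfies $\mathrm{diam}(X_*^n)<4\epsilon_n$. Thus $X_*^n\in \mathcal{U}_{4\epsilon_n}(A_n)=\mathcal{U}_n$, as required. There is no real obstacle here; the only mildly delicate point is making the geometric summation rigorous by appealing to the halving property of $\epsilon_n$ guaranteed by the construction in Theorem \ref{thm:FASOconstruction}.
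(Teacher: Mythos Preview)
Your proof is correct and follows essentially the same route as the paper: both establish the stronger inclusion $X_*^n\subseteq \mathcal{B}(x,2\epsilon_n)\cap A_n$ by tracing a chain $c_m,c_{m-1},\dots,c_n$ through the iterated bonding maps and bounding $d(x,c_n)$ by the geometric sum $\sum_{l\ge n}\epsilon_l<2\epsilon_n$ via the halving property $\epsilon_{l+1}<\epsilon_l/2$. The only cosmetic difference is that you make explicit why the strict diameter bound survives passage to the (finite) maximum, which the paper leaves implicit.
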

\begin{proof}
If $x\in X$, then $X_*^n\subseteq \mathcal{B}(x,2\epsilon_n)\cap A_n$ for every $n\in \mathbb{N}$. We prove the last assertion verifying that $q_{n,m}(A_m(x))\subset \mathcal{B}(x,2\epsilon_n)\cap A_n$ for all $m>n$. As a consequence, it can be deduced that $diam(X_*^n)<4\epsilon_n$ and $X_*^n\in \mathcal{U}_{4\epsilon_n}(A_n)$.

If $a_n\in q_{n,m}(A_m(x))$, then there exists a sequence $\{a_t\}_{n\leq t\leq m}$ with $a_t\in A_t$ and $a_t\in q_{t,t+1}(a_{t+1})$, so $d(a_t,a_{t+1})<\epsilon_t$. In addition, $a_m\in A_m(x)$, which means $d(a_m,x)<\epsilon_m$. Therefore,
\begin{align*}
d(a_n,x)& <d(a_n,a_{n+1})+d(a_{n+1},a_{n+2})+\dots+d(a_{m-1},a_m)+d(a_m,x)<\\
&<\epsilon_n+\epsilon_{n+1}+\dots +\epsilon_{m-1}+\epsilon_m <\epsilon_n+\frac{\epsilon_{n}}{2}+\dots + \frac{\epsilon_{n}}{2^{m-1-n}}+\frac{\epsilon_{n}}{2^{m-n}}=\\
&=\epsilon_n(1+\sum_{i=1}^{m-n}\frac{1}{2^i})<2\epsilon_n.
\end{align*}
\end{proof}

The idea of the following lemmas is to show that $X_*^n$, which is an infinite union of sets, stabilizes for every $n\in \mathbb{N}$.

\begin{lem}\label{lem:contenidos} If $x\in X$, then $A_n(x)\subset q_{n,n+1}(A_{n+1}(x))$ for every $n\in \mathbb{N}$.
\end{lem}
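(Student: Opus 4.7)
The plan is a direct triangle-inequality computation using the gap between $\gamma_n$ and $\epsilon_n$ built into the FASO construction. Fix $x \in X$ and $n \in \mathbb{N}$.

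First I note that $A_{n+1}(x)$ is non-empty because $A_{n+1}$ is a finite non-empty subset of $X$, so the continuous function $a \mapsto d(x,a)$ attains its minimum on it. Pick any $c \in A_{n+1}(x)$ and any $a \in A_n(x)$. Our target is to show that $a \in \mathcal{B}(c,\epsilon_n) \cap A_n$, because then by the definition
\[
q_{n,n+1}(A_{n+1}(x)) = \bigcup_{c' \in A_{n+1}(x)} \mathcal{B}(c',\epsilon_n) \cap A_n
\]
we immediately get $a \in q_{n,n+1}(A_{n+1}(x))$, and since $a$ was arbitrary this proves $A_n(x) \subset q_{n,n+1}(A_{n+1}(x))$.

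To show $a \in \mathcal{B}(c,\epsilon_n)$, I bound $d(a,c)$ by the triangle inequality through $x$:
\[
d(a,c) \leq d(a,x) + d(x,c) = d(x,A_n) + d(x,A_{n+1}) \leq \gamma_n + \gamma_{n+1}.
\]
Now I use the key constraint from the construction of a FASO (Theorem \ref{thm:FASOconstruction}): by Lemma \ref{lem:FASOpreparation}, $\epsilon_{n+1}$ was chosen so that $\epsilon_{n+1} < \tfrac{\epsilon_n - \gamma_n}{2}$, and clearly $\gamma_{n+1} < \epsilon_{n+1}$ since $A_{n+1}$ is an $\epsilon_{n+1}$-approximation and $X$ is compact. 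Combining these,
\[
\gamma_n + \gamma_{n+1} < \gamma_n + \epsilon_{n+1} < \gamma_n + \frac{\epsilon_n - \gamma_n}{2} = \frac{\epsilon_n + \gamma_n}{2} < \epsilon_n.
\]
Hence $d(a,c) < \epsilon_n$, which is what was needed.

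There is no real obstacle here: the whole argument is the two-line triangle inequality, and the only subtlety is to remember that the construction of the FASO forces the strict inequality $\epsilon_{n+1} < (\epsilon_n - \gamma_n)/2$, which is precisely what makes $\gamma_n + \gamma_{n+1}$ stay strictly below $\epsilon_n$ and thus lets any nearest point $a$ at scale $n$ be reached by the bonding map from any nearest point $c$ at scale $n+1$.
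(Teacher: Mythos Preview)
Your proof is correct and is essentially the same triangle-inequality argument as the paper's: both show that any $a\in A_n(x)$ and any $c\in A_{n+1}(x)$ satisfy $d(a,c)<\epsilon_n$ via $d(a,x)\leq\gamma_n$, $d(x,c)<\epsilon_{n+1}<\tfrac{\epsilon_n-\gamma_n}{2}$, and the bound $\gamma_n+\tfrac{\epsilon_n-\gamma_n}{2}<\epsilon_n$. Your extra step bounding $d(x,c)\leq\gamma_{n+1}<\epsilon_{n+1}$ is a harmless detour that lands in the same place.
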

\begin{proof}
If $a_n\in A_n(x)$ and $a_{n+1}\in A_{n+1}(x)$, then we have $d(a_n,x)\leq \gamma_n$ and $d(a_{n+1},x)<\epsilon_{n+1}$, respectively. We have the following relation
$$d(a_n,a_{n+1})<d(a_n,x)+d(x,a_{n+1})<\gamma_n+\epsilon_{n+1}<\gamma_n+\frac{\epsilon_n-\gamma_n}{2}=\frac{\epsilon_n}{2}+\frac{\gamma_n}{2}<\epsilon_n, $$
so $a_n\in \mathcal{B}(a_{n+1},\epsilon_n)\cap A_n=q_{n,n+1}(a_{n+1})$.
\end{proof}

\begin{lem}\label{lem:qContainsq} If $x\in X$, then $q_{n,m}(A_m(x))\subset q_{n,m+1}(A_{m+1}(x))$ for every $m>n$.
\end{lem}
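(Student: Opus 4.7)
The plan is to derive this as a direct consequence of Lemma \ref{lem:contenidos} together with the functorial/compositional structure of the bonding maps. The key observation is that, for any fixed level $n$, the induced map $q_{n,m}: \mathcal{U}_m \to \mathcal{U}_n$ (viewed as acting on subsets of $A_m$ via the formula $q_{n,m}(C)=\bigcup_{c\in C}\mathcal{B}(c,\epsilon_n)\cap A_n$ iterated through the intermediate levels) is \emph{monotone with respect to set inclusion}: if $C\subseteq C'$ then $q_{n,m}(C)\subseteq q_{n,m}(C')$, since the defining formula is a union indexed over the input set.

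Concretely, I would proceed as follows. First, invoke Lemma \ref{lem:contenidos} at level $m$ to obtain
\[
A_m(x)\subseteq q_{m,m+1}(A_{m+1}(x)).
\]
Next, apply the map $q_{n,m}$ to both sides and use the monotonicity noted above to conclude
\[
q_{n,m}(A_m(x))\subseteq q_{n,m}\bigl(q_{m,m+1}(A_{m+1}(x))\bigr).
\]
Finally, use the compatibility condition $q_{n,m+1}=q_{n,m}\circ q_{m,m+1}$ built into the inverse sequence (as stated in Theorem \ref{thm:FASOconstruction} via repeated application of Lemma \ref{lem:FASOpreparation}) to rewrite the right-hand side as $q_{n,m+1}(A_{m+1}(x))$, giving the desired inclusion.

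There is essentially no hard step here: the monotonicity of $q_{n,m}$ is immediate from the definition, the compositional identity is part of the definition of an inverse sequence, and Lemma \ref{lem:contenidos} supplies the base inclusion at the top level. The only minor subtlety to acknowledge is that $A_{m+1}(x)$ has diameter at most $2\epsilon_{m+1}<4\epsilon_{m+1}$, so it legitimately lies in $\mathcal{U}_{m+1}$ and the bonding maps can be applied to it; this was already observed implicitly in the proof of Proposition \ref{prop:X*finite}. Thus the lemma reduces to a one-line application of Lemma \ref{lem:contenidos} after taking the image under $q_{n,m}$.
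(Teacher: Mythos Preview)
Your proposal is correct and follows essentially the same route as the paper: invoke Lemma~\ref{lem:contenidos} to get $A_m(x)\subset q_{m,m+1}(A_{m+1}(x))$, apply $q_{n,m}$ to both sides using that it preserves inclusion, and rewrite the right-hand side via $q_{n,m+1}=q_{n,m}\circ q_{m,m+1}$. The only cosmetic difference is that the paper phrases the monotonicity of $q_{n,m}$ as ``continuous between finite $T_0$ spaces, hence order-preserving,'' whereas you justify it directly from the union formula; both are valid.
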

\begin{proof}
We know that $q_{n,m+1}(A_{m+1}(x))=q_{n,m}(q_{m,m+1}(A_{m+1}(x)))$. By Lemma \ref{lem:contenidos}, we have that $A_m(x)\subset q_{m,m+1}(A_{m+1}(x))$. On the other hand, $q_{n,m}$ is a continuous map between finite topological spaces, so $q_{n,m}$ preserves the subset relation. If we apply $q_{n,m}$ to $A_m(x)\subset q_{m,m+1}(A_{m+1}(x))$, then we get $q_{n,m}(A_m(x))\subseteq q_{n,m}(q_{m,m+1}(A_{m+1}(x)))=q_{n,m+1}(A_{m+1}(x))$.
\end{proof}

\begin{prop}\label{prop:estabiliza} If $x\in X$, then for every $n\in \mathbb{N}$ there exists $n_*>n$ such that for all $m\geq n_*$  $X_*^n=q_{n,m}(A_m(x))$.
\end{prop}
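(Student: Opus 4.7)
The plan is to combine the monotonicity given by Lemma \ref{lem:qContainsq} with the finiteness established by Proposition \ref{prop:X*finite}. The key observation is that $X_*^n$ is defined as an increasing union of finite subsets of the finite set $A_n$, so the union must stabilize after finitely many terms.

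First I would note that by Lemma \ref{lem:qContainsq}, the family $\{q_{n,m}(A_m(x))\}_{m>n}$ is a chain of subsets of $A_n$ ordered by inclusion: $q_{n,n+1}(A_{n+1}(x)) \subseteq q_{n,n+2}(A_{n+2}(x)) \subseteq \cdots$. Next, I would invoke Proposition \ref{prop:X*finite} (or more directly the inclusion $X_*^n \subseteq \mathcal{B}(x,2\epsilon_n)\cap A_n$ used in its proof) to conclude that $X_*^n$ is a finite set, since $A_n$ itself is finite. Therefore the increasing chain of subsets of the finite set $A_n$ must be eventually constant: there exists $n_* > n$ such that $q_{n,m}(A_m(x)) = q_{n,n_*}(A_{n_*}(x))$ for every $m \geq n_*$.

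Finally, since $X_*^n = \bigcup_{m>n} q_{n,m}(A_m(x))$ is the union of a chain that stabilizes at stage $n_*$, its value is precisely the stable set; that is, $X_*^n = q_{n,m}(A_m(x))$ for all $m \geq n_*$, as required.

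There is no substantive obstacle here; the entire content is the pigeonhole-style remark that a monotone sequence in a finite lattice stabilizes. The only minor care needed is to make sure the chain is formed correctly (this is exactly the statement of Lemma \ref{lem:qContainsq}) and that it lives inside a finite set (which is immediate because $A_n$ is finite).
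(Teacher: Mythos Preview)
Your proposal is correct and follows essentially the same approach as the paper: both combine the monotonicity from Lemma~\ref{lem:qContainsq} with the finiteness from Proposition~\ref{prop:X*finite} to conclude that the increasing union stabilizes. The only cosmetic difference is that the paper picks, for each element $a\in X_*^n$, an index $n_a$ at which $a$ first appears and then takes $n_* = \max\{n_a\}$, whereas you phrase the same pigeonhole step as ``an increasing chain of subsets of a finite set is eventually constant''; these are two wordings of the same argument.
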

\begin{proof}
By Proposition \ref{prop:X*finite} we know that $X_*^n\subset \mathcal{B}(x,2\epsilon_n)\cap A_n$, so $X_*^n$ is a finite set. By Lemma \ref{lem:qContainsq}, if $a\in X_*^n$, then there exists $n_a\in \mathbb{N}$ such that $a\in q_{n,m}(A_m(x))$ for every $ m\geq n_a$. We consider 
$$n_*=max\{n_a| a\in X_*^n \quad and \quad a\in q_{n,m}(A_{m}(x))  \quad \text{with} \quad m\geq n_a \}, $$
where $n_*$ is well-defined since  $X_*^n$ is a finite set. From here, it follows the desired result.
\end{proof}

Finally, we prove that $\{ X_*^n\}_{n\in \mathbb{N}}$ is an element of the inverse limit $\mathcal{X}$. Then, we also prove a connection between the elements of $\mathcal{X}$ and $X$.
\begin{prop}\label{prop:elementoslimiteinverso} If $x\in X$, then $\{X_*^n \}_{n\in \mathbb{N}}\in \mathcal{X}$. 
\end{prop}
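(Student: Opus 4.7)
The goal is to verify that $\{X_*^n\}_{n\in\mathbb{N}}$ satisfies the definition of the inverse limit: each coordinate must lie in the correct term, and the coordinates must be compatible under the bonding maps. The former is already Proposition \ref{prop:X*finite}, so the real content is to show that $q_{n,n+1}(X_*^{n+1}) = X_*^n$ for every $n\in\mathbb{N}$, from which the full compatibility $q_{n,m}(X_*^m) = X_*^n$ for all $m \geq n$ follows by iterating and using the inverse sequence relation.

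The plan is to exploit the stabilization established in Proposition \ref{prop:estabiliza}. For the fixed $n$, pick $n_* > n$ so that $X_*^n = q_{n,m}(A_m(x))$ for every $m \geq n_*$; similarly, pick $(n+1)_* > n+1$ so that $X_*^{n+1} = q_{n+1,m}(A_m(x))$ for every $m \geq (n+1)_*$. Choosing any $m \geq \max\{n_*,(n+1)_*\}$, both stabilizations hold simultaneously at this index. Since by definition the inverse sequence satisfies $q_{n,m} = q_{n,n+1}\circ q_{n+1,m}$, this yields
\[
q_{n,n+1}(X_*^{n+1}) \;=\; q_{n,n+1}\bigl(q_{n+1,m}(A_m(x))\bigr) \;=\; q_{n,m}(A_m(x)) \;=\; X_*^n,
\]
which is exactly the required compatibility relation.

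There is no substantial obstacle here: the heavy lifting has been packaged into the preparatory results. Proposition \ref{prop:X*finite} supplies the diameter estimate forcing $X_*^n \in \mathcal{U}_n$, and Proposition \ref{prop:estabiliza} collapses the a priori infinite union $\bigcup_{m>n} q_{n,m}(A_m(x))$ into a single image $q_{n,m}(A_m(x))$ once $m$ is large enough. With these two ingredients in hand, the proposition reduces to the one-line composition argument displayed above; the only care needed is to choose the threshold $m$ simultaneously large enough for both the $n$-th and $(n+1)$-st stabilizations so that the two descriptions can be compared through the bonding map.
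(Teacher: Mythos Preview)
Your proof is correct and follows essentially the same approach as the paper: reduce to showing $q_{n,n+1}(X_*^{n+1})=X_*^n$, invoke Proposition~\ref{prop:estabiliza} to pick a common index beyond both stabilization thresholds, and then use the bonding-map composition $q_{n,m}=q_{n,n+1}\circ q_{n+1,m}$ to conclude. The paper's argument is the same one-line computation with a single index $s>n_*,(n+1)_*$.
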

\begin{proof}
It is only necessary to check that $q_{n,n+1}(X_*^{n+1})=X_*^n$, the general case follows inductively. By Proposition \ref{prop:estabiliza}, if $s>n_*,(n+1)_*$, then $q_{n,s}(A_s(x))=X_*^n$ and $q_{n+1,s}(A_{s}(x))=X_*^{n+1}$. Thus,
$$q_{n,n+1}(X_*^{n+1})=q_{n,n+1}(q_{n+1,s}(A_s(x)))=q_{n,s}(A_s(x))=X_*^n. $$
\end{proof}


%

We recall some basic definitions and properties that we need. Given a compact metric space $(X,d)$, $2^X=\{C\subseteq X| C$ is non-empty and closed $\}$ is called the hyperspace of $X$. There is a natural metric that can be defined on $2^X$, the Hausdorff metric $d_H$. If $C,D\in 2^X$, then the Hausdorff metric is defined as follows:
$$d_H(C,D)=\inf \{\epsilon>0|C\subseteq \mathcal{B}(D,\epsilon), \ D\subseteq \mathcal{B}(C,\epsilon) \}, $$
where $\mathcal{B}(C,\epsilon)$ and $\mathcal{B}(D,\epsilon)$ denote the generalize ball of radius $\epsilon$, that is, if $C\in 2^X$, then $\mathcal{B}(C,\epsilon)=\{x\in X|d(x,C)<\epsilon \}$. The hyperspace of $X$ with the Hausdorff metric is a compact metric space. We recollect in the following proposition some properties of the Hausdorff metric.
\begin{prop}\label{prop:propiedadesmetricahausdorff} Let $(X,d)$ be a compact metric space and let $(2^X,d_H)$ be the hyperspace of $X$ with the Hausdorff metric. 
\begin{itemize}
\item $d_H(x,y)=d(x,y)$ if $x,y\in X$.
\item $d_H(x,C)=\sup \{d(x,c)|c\in C \}\geq \inf \{ d(x,c)|c\in C\}=d(x,C)$ if $x\in X$ and $C\in 2^X $.
\item $d_H(x,D)\leq d_H(x,C)$ but $d(x,D)\geq d(x,C)$ if $x\in X$ and $C,D\in 2^X$, where $D\subseteq C$.
\end{itemize}
\end{prop}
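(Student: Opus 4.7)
All three items are direct consequences of unpacking the definition of the Hausdorff metric, once we identify an element $x\in X$ with the singleton $\{x\}\in 2^X$. My plan is to rewrite each of the two inclusions appearing in the definition of $d_H$ as a numerical inequality involving $d$, and then take the infimum over $\epsilon$; no deep step is required, so I will keep the proof short.

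For item (1), taking $C=\{x\}$ and $D=\{y\}$, the inclusion $\{x\}\subseteq\mathcal{B}(\{y\},\epsilon)$ is exactly $d(x,y)<\epsilon$, and symmetrically for $\{y\}\subseteq\mathcal{B}(\{x\},\epsilon)$. Both inclusions hold precisely when $\epsilon>d(x,y)$, so the infimum over such $\epsilon$ equals $d(x,y)$, giving $d_H(x,y)=d(x,y)$.

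For item (2), fix $x\in X$ and $C\in 2^X$. The inclusion $\{x\}\subseteq\mathcal{B}(C,\epsilon)$ becomes $\inf_{c\in C}d(x,c)<\epsilon$, which since $C$ is compact is equivalent to $d(x,C)<\epsilon$. The inclusion $C\subseteq\mathcal{B}(x,\epsilon)$ says $d(c,x)<\epsilon$ for every $c\in C$; again by compactness of $C$ the supremum $\sup_{c\in C}d(x,c)$ is attained, so passing to the infimum over $\epsilon$ shows this second inclusion is binding at $\epsilon=\sup_{c\in C}d(x,c)$. Since $\inf_{c\in C}d(x,c)\leq\sup_{c\in C}d(x,c)$, the second constraint dominates the first, yielding $d_H(x,C)=\sup_{c\in C}d(x,c)$, which is automatically $\geq \inf_{c\in C}d(x,c)=d(x,C)$.

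For item (3), assume $D\subseteq C$. Taking the sup over the smaller set $D$ cannot exceed the sup over $C$, so by item (2) we get $d_H(x,D)=\sup_{a\in D}d(x,a)\leq\sup_{c\in C}d(x,c)=d_H(x,C)$. Conversely, taking the inf over the smaller set $D$ cannot be below the inf over $C$, so $d(x,D)=\inf_{a\in D}d(x,a)\geq\inf_{c\in C}d(x,c)=d(x,C)$. There is essentially no obstacle here; the only small point requiring care is the passage from the strict inclusion $C\subseteq\mathcal{B}(x,\epsilon)$ to the inequality $\sup_{c\in C}d(x,c)\leq\epsilon$, which I would handle by using compactness of $C$ together with the fact that the outer infimum over $\epsilon$ absorbs the difference between $<$ and $\leq$.
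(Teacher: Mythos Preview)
Your proof is correct. The paper itself does not supply a proof of this proposition; it is stated without argument as a collection of standard properties of the Hausdorff metric, so your short verification by unpacking the definition is exactly what is needed and there is nothing to compare against.
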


\begin{prop}\label{prop:convergencia} If $\{ C_n \}_{n\in \mathbb{N}}\in \mathcal{X}$, then $\{ C_n \}_{n\in \mathbb{N}}$ is a Cauchy sequence in $(2^X,d_H)$ that converges to $\{ x\}\in 2^X$ for some $x\in X$. Moreover, $d_H(x,C_n)<2\epsilon_n$ for every $n\in \mathbb{N}$.
\end{prop}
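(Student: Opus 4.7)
The plan is to first quantify how close consecutive terms of the sequence are, then use completeness of $(2^X,d_H)$ and the shrinking diameter of the $C_n$ to identify the limit as a singleton. The crucial estimate is a Hausdorff-distance bound between consecutive terms: I claim that $d_H(C_n,C_{n+1})<\epsilon_n$. To see this, note that since $\{C_n\}\in\mathcal{X}$, we have $C_n=q_{n,n+1}(C_{n+1})=\bigcup_{y\in C_{n+1}}\mathcal{B}(y,\epsilon_n)\cap A_n$. Thus every $a\in C_n$ lies within $\epsilon_n$ of some $y\in C_{n+1}$, giving $C_n\subseteq\mathcal{B}(C_{n+1},\epsilon_n)$. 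Conversely, for every $y\in C_{n+1}\subseteq X$, the fact that $A_n$ is an $\epsilon_n$-approximation of $X$ guarantees some $a\in A_n$ with $d(y,a)<\epsilon_n$; such an $a$ belongs to $q_{n,n+1}(C_{n+1})=C_n$, giving $C_{n+1}\subseteq\mathcal{B}(C_n,\epsilon_n)$.

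Next, the construction of a FASO enforces $\epsilon_{n+1}<(\epsilon_n-\gamma_n)/2<\epsilon_n/2$, so $\epsilon_{n+k}<\epsilon_n/2^k$ for all $k\geq 1$. A telescoping estimate then yields, for $m>n$,
\[
d_H(C_n,C_m)\leq\sum_{k=n}^{m-1}d_H(C_k,C_{k+1})<\sum_{k=n}^{m-1}\epsilon_k<\epsilon_n\sum_{k=0}^{\infty}\frac{1}{2^k}=2\epsilon_n.
\]
In particular $\{C_n\}$ is Cauchy in $(2^X,d_H)$. Since $X$ is a compact metric space, $(2^X,d_H)$ is a compact (hence complete) metric space, so the sequence converges to some $D\in 2^X$.

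To finish, I would use the diameter function $\mathrm{diam}\colon 2^X\to\mathbb{R}$, which is continuous with respect to $d_H$. Because $C_n\in\mathcal{U}_{4\epsilon_n}(A_n)$ we have $\mathrm{diam}(C_n)<4\epsilon_n\to 0$, so $\mathrm{diam}(D)=0$ and therefore $D=\{x\}$ for some $x\in X$. Finally, letting $m\to\infty$ in the inequality $d_H(C_n,C_m)\leq\sum_{k=n}^{m-1}\epsilon_k$ and using continuity of $C\mapsto d_H(C_n,C)$, we obtain
\[
d_H(\{x\},C_n)\leq\sum_{k=n}^{\infty}\epsilon_k<2\epsilon_n,
\]
which is the second claim.

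The only mildly delicate point is the lower inclusion $C_{n+1}\subseteq\mathcal{B}(C_n,\epsilon_n)$: it is not literally part of the definition of the bonding map, but it follows cleanly from the fact that $A_n$ is an $\epsilon_n$-approximation of the ambient $X$, so each element of $C_{n+1}$ necessarily has a neighbor in $A_n$, and such a neighbor must land in $C_n$. Once this is in hand, everything else is the standard completeness-and-shrinking-diameter argument.
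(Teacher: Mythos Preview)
Your proof is correct and follows the same overall strategy as the paper: bound Hausdorff distances between terms by a geometric series in the $\epsilon_n$, invoke completeness of $(2^X,d_H)$, and use continuity of the diameter function to identify the limit as a singleton. The execution differs in one pleasant way. The paper builds, for each $c_n\in C_n$, a chain $c_n,c_{n+1},\dots,c_m$ with $c_t\in q_{t,t+1}(c_{t+1})$ and sums to obtain $d_H(C_n,C_m)<2\epsilon_n-\gamma_n/2$ directly; the leftover $\gamma_n/2$ margin is then spent at the end to recover the strict bound $d_H(x,C_n)<2\epsilon_n$ after passing to the limit. You instead isolate the one-step estimate $d_H(C_n,C_{n+1})<\epsilon_n$, getting the less obvious inclusion $C_{n+1}\subseteq\mathcal{B}(C_n,\epsilon_n)$ from the fact that $A_n$ is an $\epsilon_n$-approximation of $X$, and then telescope; since $\epsilon_{k+1}<\epsilon_k/2$ strictly, the series bound $\sum_{k\ge n}\epsilon_k<2\epsilon_n$ is itself strict and survives the limit, so you never need to track $\gamma_n$. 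Both arguments land in the same place, but yours is a little cleaner for avoiding the auxiliary constant.
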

\begin{proof}
Firstly, we check that $d_H(C_n,C_m)<2\epsilon_n-\frac{\gamma_n}{2}$ for every $n,m\in \mathbb{N}$ satisfying $m\geq n$. If $c_n\in C_n$, then there exists a sequence $\{c_t \}_{n\leq t\leq m}$ with $c_t\in A_t$ and $c_t\in q_{t,t+1}(c_{t+1})$, so $d(c_t,c_{t+1})<\epsilon_t$. Then,
\begin{align*}
d(c_n,c_m)&<d(c_n,c_{n+1})+\dots+d(c_{m-1},c_m)<\epsilon_n+\dots +\epsilon_{m-1}<\\ 
& <\epsilon_n+\frac{\epsilon_n-\gamma_n}{2}+\dots + \frac{\epsilon_n-\gamma_n}{2^{m-1-n}}=\epsilon_n(1+\sum_{i=1}^{m-1-n}\frac{1}{2^i})-\gamma_n\sum_{i=1}^{m-1-n} \frac{1}{2^i}\\ &<2\epsilon_n-\frac{\gamma_n}{2},
\end{align*}
which implies that $C_n\subset \mathcal{B}(C_m,2\epsilon_n-\frac{\gamma_n}{2})$. If $c_m\in C_m$, then we can repeat the same argument to show that $C_m\subset \mathcal{B}(C_n,2\epsilon_n-\frac{\gamma_n}{2})$. Therefore, for every $\epsilon>0$ there exists $s\in \mathbb{N}$ such that for every $n,m>s$ we have $d_H(C_n,C_m)<\epsilon$. It is only necessary to consider $s$ satisfying that $2\epsilon_s-\frac{\gamma_s}{2}<\epsilon$. Hence, $\{C_n\}_{n \in \mathbb{N}}$ is a Cauchy sequence in a compact metric space $2^X$, $\{C_n\}_{n\in \mathbb{N}}$ converges to an element $C\in 2^X_H$. It is important to recall that $diam(C_n)\leq 4\epsilon_n$ because $C_n\in \mathcal{U}_{4\epsilon_n}(A_n)$. Due to the fact of the continuity of the diameter function regarding to the Hausdorff metric, we have 
$$diam(C)=diam(\lim_{n\rightarrow \infty} (C_n))=\lim_{n\rightarrow \infty}(diam (C_n))< \lim_{n\rightarrow \infty} 4\epsilon_n=0. $$
Thus, $C=\{x\}$ for some $x\in X$.

We have shown that $d_H(C_n,C_m)<2\epsilon_n-\frac{\gamma_n}{2}$ for every $m,n\in \mathbb{N}$ satisfying $m\geq n$. $\{C_n\}_{n\in \mathbb{N}}$ is a Cauchy sequence that converges to $\{ x\}$. Then for $\frac{\gamma_n}{2}$ there exists $n_0>n$ such that for every $m>n_0$ we get $d_H(x,C_m)<\frac{\gamma_n}{2}$. Therefore,
$$d_H(x,C_n)<d_H(x,C_m)+d_H(C_m,C_n)<\frac{\gamma_n}{2}+2\epsilon_n-\frac{\gamma_n}{2}=2\epsilon_n. $$
\end{proof}

\begin{prop}\label{prop:surjectivity} If $x\in X$, then $\{X_*^n\}_{n \in \mathbb{N}}\in \mathcal{X}$ converges to $\{x\}$ in $2^X_H$.
\end{prop}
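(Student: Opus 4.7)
The proposition is essentially already set up by the earlier results, so my plan is to extract the Hausdorff-distance bound between $\{x\}$ and $X_*^n$ directly from the construction of $X_*^n$ and then invoke uniqueness of limits in the hyperspace.

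The first step is to recall that by Proposition \ref{prop:elementoslimiteinverso} we already have $\{X_*^n\}_{n\in\mathbb{N}}\in\mathcal{X}$, so by Proposition \ref{prop:convergencia} this sequence is Cauchy in $(2^X,d_H)$ and converges to some singleton $\{y\}\in 2^X$. The task is therefore only to identify $y$ with the specific $x\in X$ that was used to define the sets $X_*^n$.

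The key observation, which was already made inside the proof of Proposition \ref{prop:X*finite}, is that every $a\in q_{n,m}(A_m(x))$ satisfies $d(a,x)<2\epsilon_n$; consequently $X_*^n\subseteq \mathcal{B}(x,2\epsilon_n)\cap A_n$. Using the second item of Proposition \ref{prop:propiedadesmetricahausdorff} (applied with the closed set $X_*^n$ in the role of $C$ and $x$ in the role of the point), this gives
\[
d_H(x,X_*^n)=\sup\{d(x,a):a\in X_*^n\}\leq 2\epsilon_n.
\]
Since $\epsilon_n\to 0$ as $n\to\infty$, it follows that $X_*^n\to\{x\}$ in $(2^X,d_H)$.

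Finally, the limit of a convergent sequence in the metric space $(2^X,d_H)$ is unique, so comparing with the limit $\{y\}$ provided by Proposition \ref{prop:convergencia} forces $y=x$, which is the claim. I do not anticipate any real obstacle here; the whole argument is a direct packaging of the diameter estimate from Proposition \ref{prop:X*finite} together with the Hausdorff-metric identity $d_H(x,C)=\sup_{c\in C}d(x,c)$ recorded in Proposition \ref{prop:propiedadesmetricahausdorff}.
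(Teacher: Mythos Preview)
Your argument is correct and follows essentially the same route as the paper, which simply records the result as an immediate consequence of Proposition~\ref{prop:X*finite} (giving $X_*^n\subseteq \mathcal{B}(x,2\epsilon_n)$, hence $d_H(x,X_*^n)<2\epsilon_n$) and Proposition~\ref{prop:convergencia}. You have merely spelled out the details and added the explicit citation to Proposition~\ref{prop:elementoslimiteinverso} for the membership $\{X_*^n\}\in\mathcal{X}$, which the paper leaves implicit in the statement.
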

\begin{proof}
It is an immediate consequence of Proposition \ref{prop:X*finite} and Proposition \ref{prop:convergencia}.
\end{proof}

Furthermore, we also get that $\{ X_*^n \}_{n \in \mathbb{N}}$ is somehow minimal with respect to the elements of $\mathcal{X}$ that converge to the same point.

\begin{prop}\label{prop:minimal} If $\{C_n\}_{n \in \mathbb{N}}\in \mathcal{X}$ converges to $\{x\}$, then $X_*^n\subseteq C_n$ for every $n\in \mathbb{N}$.
\end{prop}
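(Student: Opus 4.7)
The plan is to reduce the statement to the level-wise inclusion $A_m(x)\subseteq C_m$ for every $m$, and then propagate this down through the bonding maps by monotonicity. The crucial ingredient is the FASO tightness condition $\epsilon_{m+1}<(\epsilon_m-\gamma_m)/2$ from Lemma \ref{lem:FASOpreparation}, which rearranges to $\gamma_m+2\epsilon_{m+1}<\epsilon_m$ and supplies exactly the slack needed in a triangle inequality.

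First I would fix $m\in\mathbb{N}$ and prove $A_m(x)\subseteq C_m$. Take $a\in A_m(x)$; by definition $d(a,x)=d(x,A_m)\leq \gamma_m$. Proposition \ref{prop:convergencia} gives $d_H(x,C_{m+1})<2\epsilon_{m+1}$, and since $C_{m+1}$ is finite the infimum is attained: there exists $c\in C_{m+1}$ with $d(x,c)<2\epsilon_{m+1}$. The triangle inequality and the rearranged FASO condition then yield
$$d(a,c)\leq d(a,x)+d(x,c)<\gamma_m+2\epsilon_{m+1}<\epsilon_m,$$
so $a\in \mathcal{B}(c,\epsilon_m)\cap A_m = q_{m,m+1}(\{c\})\subseteq q_{m,m+1}(C_{m+1})=C_m$.

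Next I would exploit that each $q_{n,n+1}$ is defined as a union over its argument, so the iterated map $q_{n,m}$ is monotone under set inclusion: $S\subseteq T$ implies $q_{n,m}(S)\subseteq q_{n,m}(T)$. Applying this to $A_m(x)\subseteq C_m$ and using the inverse-limit relation $q_{n,m}(C_m)=C_n$ gives $q_{n,m}(A_m(x))\subseteq C_n$ for every $m>n$. Taking the union over $m$ produces
$$X_*^n=\bigcup_{m>n}q_{n,m}(A_m(x))\subseteq C_n,$$
which is the desired inclusion.

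The main obstacle, in my view, is noticing that the FASO inequality from Lemma \ref{lem:FASOpreparation}, introduced there purely to ensure the bonding maps are well-defined and continuous, is exactly what makes $A_m(x)$ fit inside $\mathcal{B}(c,\epsilon_m)$ for some $c\in C_{m+1}$. Once this tight fit is observed, no appeal to the stabilization of Proposition \ref{prop:estabiliza} is even needed: the union over $m$ in the definition of $X_*^n$ does the rest.
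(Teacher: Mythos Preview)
Your proof is correct and follows essentially the same line as the paper's: both establish $A_m(x)\subseteq C_m$ via the triangle inequality $d(a,c)<\gamma_m+2\epsilon_{m+1}<\epsilon_m$ and then push this inclusion down by monotonicity of the bonding maps. The only difference is that the paper invokes Proposition~\ref{prop:estabiliza} to replace the union defining $X_*^n$ by a single stabilized term $q_{n,s}(A_s(x))$, whereas you simply take the union of the inclusions $q_{n,m}(A_m(x))\subseteq C_n$ directly; your route is slightly cleaner. One small remark: your phrase ``since $C_{m+1}$ is finite the infimum is attained'' is unnecessary---$d_H(x,C_{m+1})<2\epsilon_{m+1}$ already bounds the \emph{supremum} of distances (Proposition~\ref{prop:propiedadesmetricahausdorff}), so every $c\in C_{m+1}$ works, not merely some minimizer.
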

\begin{proof}
We prove that $A_n(x)\subset C_n$ for all $n\in \mathbb{N}$. We know that $q_{n,n+1}(C_{n+1})=C_n$. If $c_{n+1}\in C_{n+1}$, then we have that $d_H(x,C_{n+1})<2\epsilon_{n+1}$ by Proposition \ref{prop:convergencia}. Therefore, $d(x,c_{n+1})<2\epsilon_{n+1}$. In addition, for every $a_n\in A_n(x)$, we get $d(x,a_n)\leq \gamma_n$. We have
$$d(a_n,c_{n+1})<d(a_n,x)+d(x,c_{n+1})<\gamma_n+2\epsilon_{n+1}<\gamma_n+\epsilon_n-\gamma_n=\epsilon_n, $$
so $a_n\in \mathcal{B}(c_{n+1},\epsilon_n)\cap A_n=q_{n,n+1}(c_{n+1})\subset C_n$. We can conclude that $A_n(x)\subset C_n$. We take $s>n_*$ where $n_*$ is given by Proposition \ref{prop:estabiliza}. Then $q_{n,s}(A_s(x))=X_*^n$. On the other hand, we have proved that $A_s(x)\subset C_s$. If we apply $q_{n,s}$ to the previous content, then we get the desired result because $q_{n,s}$ is a continuous map between finite topological spaces, which means that it preserves the subset relation,
$$X_*^n=q_{n,s}(A_s(x))\subset q_{n,s}(C_s)=C_n. $$
\end{proof}

Now, we can define a map $\varphi$ between the inverse limit of $(\mathcal{U}_n,q_{n,m})$ and $X$, $\varphi:\mathcal{X}\rightarrow X$. The map $\varphi$ sends each element of the inverse limit to its convergent point given by Proposition \ref{prop:convergencia}, that is, $\varphi(\{C_n \}_{n\in \mathbb{N}})=x$, where $\lim_{n\rightarrow\infty}C_n= \{x \}$.

\begin{prop} $\varphi:\mathcal{X}\rightarrow X$ is surjective and continuous.
\end{prop}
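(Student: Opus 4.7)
The plan is to dispatch surjectivity by invoking the work already done in the previous propositions, and then to verify continuity of $\varphi$ by an explicit $\epsilon$-neighborhood argument that combines the Hausdorff estimate of Proposition~\ref{prop:convergencia} with the poset structure of $\mathcal{U}_n$. Surjectivity requires no new ideas: given $x\in X$, Proposition~\ref{prop:surjectivity} gives $\{X_*^n\}_{n\in\mathbb{N}}\in\mathcal{X}$ converging to $\{x\}$ in $(2^X,d_H)$, so by the definition of $\varphi$ we have $\varphi(\{X_*^n\})=x$.

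For continuity, I would fix $\{C_n\}\in\mathcal{X}$ with $\varphi(\{C_n\})=x$, take $\epsilon>0$, and choose $n$ with $4\epsilon_n<\epsilon$ (possible since $\epsilon_n\to 0$). The candidate neighborhood of $\{C_n\}$ in $\mathcal{X}$ is
$$V=\pi_n^{-1}(U_{C_n})\cap\mathcal{X},$$
where $\pi_n$ is the projection onto the $n$-th factor and $U_{C_n}$ is the minimal open neighborhood of $C_n$ in $\mathcal{U}_n$. Under the FASO convention $C\leq D\iff D\subseteq C$, open sets are lower sets, so $U_{C_n}=\{D\in\mathcal{U}_n:C_n\subseteq D\}$. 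Therefore any $\{C_n'\}\in V$ satisfies $C_n\subseteq C_n'$; picking any $a\in C_n\subseteq C_n'$ (which exists since elements of $\mathcal{U}_n$ are non-empty), Proposition~\ref{prop:convergencia} applied to both threads gives $d(x,a)\leq d_H(\{x\},C_n)<2\epsilon_n$ and, writing $x'=\varphi(\{C_n'\})$, $d(x',a)\leq d_H(\{x'\},C_n')<2\epsilon_n$. The triangle inequality then yields $d(x,x')<4\epsilon_n<\epsilon$, so $\varphi(V)\subseteq\mathcal{B}(x,\epsilon)$ and continuity is established.

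The main (and essentially only) obstacle is an orientation check on the partial order: because a FASO is built with the opposite convention, the minimal open neighborhood of $C_n$ consists of its \emph{supersets} rather than its subsets. This is the correct direction for the argument, since it produces a common point $a\in C_n\cap C_n'$ that simultaneously witnesses both Hausdorff estimates. Everything else is a routine product-topology observation, together with the fact that the $\pi_n^{-1}(U_{C_n})\cap\mathcal{X}$ form a neighborhood basis at $\{C_n\}$ in $\mathcal{X}$, which follows because the bonding maps are order-preserving and hence $\pi_{n+1}^{-1}(U_{C_{n+1}})\cap\mathcal{X}\subseteq \pi_n^{-1}(U_{C_n})\cap\mathcal{X}$.
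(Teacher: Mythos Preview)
Your proof is correct and follows essentially the same approach as the paper. Both arguments establish surjectivity via Proposition~\ref{prop:surjectivity}, and for continuity both pick $n_0$ with $4\epsilon_{n_0}<\delta$, take the basic neighborhood determined by the minimal open set $U_{C_{n_0}}=\{D:C_{n_0}\subseteq D\}$, and conclude via the $2\epsilon_n$ Hausdorff estimate from Proposition~\ref{prop:convergencia}; the only cosmetic difference is that the paper uses the monotonicity $d_H(y,C_{n_0})\le d_H(y,D_{n_0})$ from Proposition~\ref{prop:propiedadesmetricahausdorff} in place of your common-point trick, yielding the identical $4\epsilon_{n_0}$ bound.
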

\begin{proof}
The surjectivity is given by the construction of $\{X_*^n\}_{n\in \mathbb{N}}$ and Proposition \ref{prop:surjectivity}, so it only remains to show the continuity. For each open neighborhood $V$ of $\varphi(\{C_n\}_{n\in \mathbb{N}})=x$, we can take $\delta>0$ such that $\mathcal{B}(x,\delta)\subset V$. Now, we consider the open neighborhood of $\{C_n\}_{n\in \mathbb{N}}$ given as follows:
$$W=(2_{C_1}\times 2_{C_2}\times \dots \times 2_{C_{n_0}}\times \mathcal{U}_{4\epsilon_{n_0+1}(A_{n_0+1})}\times \dots )\cap \mathcal{X}, $$
where $2_{C_t}=\{ D\in \mathcal{U}_{4\epsilon_t(A_t)}| C_t \subset D\}$  denotes the minimal open neighborhood of $C_t$ in $\mathcal{U}_{4\epsilon_t}(A_t)$. We consider $n_0$ satisfying that for every  $n\geq n_0$, it is obtained that  $\epsilon_n<\frac{\delta}{4}$. If $\{ D_n\}_{n \in \mathbb{N}}\in W$ with $\{D_n\}\rightarrow \{ y\}$, then we check that $y\in \mathcal{B}(x,\delta)$. By construction, $C_n \subset D_n$ for every $n\leq n_0$. By the second part of Proposition \ref{prop:convergencia} and the previous observation, we get
\begin{align*}
d(x,y)=d_H(x,y)&<d_H(x,C_{n_0})+d_H(C_{n_0},y)<d_H(x,C_{n_0})+d_H(D_{n_0}, y )< \\& <2\epsilon_{n_0}+2\epsilon_{n_0}<4\epsilon_{n_0}<\delta,
\end{align*}
where we are using the properties of the Hausdorff metric given in Proposition \ref{prop:propiedadesmetricahausdorff}.
\end{proof}

We can also define a map $\phi$ between $X$ and $\mathcal{X}$ given by the construction made at the beginning, that is, $\phi(x)=\{X_*^n\}_{n \in \mathbb{N}}$.
\begin{prop}\label{prop:phiInjective} $\phi$ is injective and continuous.
\end{prop}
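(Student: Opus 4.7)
The plan is to dispose of injectivity first and then concentrate on continuity, which is where the real work lies. For injectivity, suppose $\phi(x)=\phi(y)$. Then $\{X_*^n\}_{n\in\mathbb{N}}$ is the same sequence whether produced from $x$ or from $y$. By Proposition \ref{prop:surjectivity} this single sequence converges in $(2^X,d_H)$ both to $\{x\}$ and to $\{y\}$, and by uniqueness of limits in a metric space we conclude $x=y$.

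For continuity, fix $x\in X$. A basic open neighborhood of $\phi(x)$ in $\mathcal{X}$ has the form
$$W=\Bigl(U_{X_*^1}\times U_{X_*^2}\times\cdots\times U_{X_*^{n_0}}\times\prod_{n>n_0}\mathcal{U}_n\Bigr)\cap\mathcal{X},$$
where $U_{X_*^n}=\{D\in\mathcal{U}_n : X_*^n\subseteq D\}$ is the minimal open neighborhood of $X_*^n=X_*^n(x)$ in the poset structure of $\mathcal{U}_n$. By Proposition \ref{prop:estabiliza} applied finitely many times, there exists a single index $M\geq n_0$ such that $X_*^n(x)=q_{n,M}(A_M(x))$ for every $n\leq n_0$. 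I will then choose $\delta>0$ with $\delta<\epsilon_{M+1}$ and show that $d(x,y)<\delta$ forces $\phi(y)\in W$, i.e.\ $X_*^n(x)\subseteq X_*^n(y)$ for each $n\leq n_0$.

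To verify the inclusion, take $a\in X_*^n(x)$. By the stabilization there is a chain $a=a_n,a_{n+1},\dots,a_M$ with $a_i\in A_i$, $a_i\in q_{i,i+1}(a_{i+1})$, and $a_M\in A_M(x)$. Pick any $c_{M+1}\in A_{M+1}(y)$ (nonempty since $A_{M+1}$ is a finite set). The key estimate is
$$d(a_M,c_{M+1})\leq d(a_M,x)+d(x,y)+d(y,c_{M+1})\leq \gamma_M+\delta+\gamma_{M+1},$$
and since the FASO construction gives $\epsilon_{M+1}<(\epsilon_M-\gamma_M)/2$, we have $\gamma_M<\epsilon_M-2\epsilon_{M+1}$; together with $\gamma_{M+1}<\epsilon_{M+1}$ and $\delta<\epsilon_{M+1}$ this yields $d(a_M,c_{M+1})<\epsilon_M$. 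Hence $a_M\in q_{M,M+1}(c_{M+1})$, the extended chain $a,a_{n+1},\dots,a_M,c_{M+1}$ witnesses $a\in q_{n,M+1}(A_{M+1}(y))\subseteq X_*^n(y)$, and the inclusion follows.

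The main obstacle is choosing $M$ and $\delta$ correctly in tandem. A naive approach tries to match $a_M$ directly with an element of $A_M(y)$, but $A_M(x)$ and $A_M(y)$ need not intersect even for $y$ arbitrarily close to $x$ (e.g.\ points equidistant from two samples can break ties differently under perturbation). What makes the argument work is that one does not need $a_M\in A_M(y)$: one only needs a one-step bridge at level $M+1$, and the slack $\epsilon_M-\gamma_M>2\epsilon_{M+1}$ built into the FASO is precisely what absorbs the extra triangle-inequality term $d(x,y)+\gamma_{M+1}$. This is why stabilization (Proposition \ref{prop:estabiliza}) combined with the definition of a FASO is exactly the right tool.
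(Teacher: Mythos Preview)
Your continuity argument is correct and follows essentially the same route as the paper: stabilize at a level $M$ (the paper calls it $s$), then show that any nearest point of $A_M(x)$ lies in $q_{M,M+1}(c_{M+1})$ for $c_{M+1}\in A_{M+1}(y)$, and propagate the inclusion down via $q_{n,M}$. Your choice $\delta<\epsilon_{M+1}$ is a cosmetic variant of the paper's $\delta<\epsilon_s-\gamma_s-\epsilon_{s+1}$; both exploit the slack $\epsilon_{M+1}<(\epsilon_M-\gamma_M)/2$ in exactly the way you describe.

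The one genuine difference is injectivity. The paper argues directly that for $\epsilon_n<d(x,y)/16$ the sets $X_*^n$ and $Y_*^n$ are disjoint (since both lie in balls of radius $2\epsilon_n$ about $x$ and $y$ respectively), hence $\phi(x)\neq\phi(y)$. You instead invoke Proposition~\ref{prop:surjectivity} and uniqueness of limits in $(2^X,d_H)$. Your route is shorter and perfectly valid; the paper's has the minor advantage of being self-contained and also yielding the disjointness fact $X_*^n\cap Y_*^n=\emptyset$, which is reused in the proof of Proposition~\ref{prop:hausdorff}.
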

\begin{proof}
We show the continuity of $\phi$. For each open neighborhood $V$ of $\phi(x)=\{X_*^n\}_{n \in \mathbb{N}}$ we can find an open neighborhood of the form 
$$W=(2_{X_*^1}\times 2_{X_*^2}\times \dots \times 2_{X_*^{n}}\times \mathcal{U}_{4\epsilon_{n+1}(A_{n+1})}\times \dots )\cap \mathcal{X} $$
such that $W\subset V$. By Proposition \ref{prop:estabiliza}, for every $X_*^n$ there exists $n_*$ satisfying that for every $s>n_*$, $q_{n,s}(A_s(x))=X_*^n$. We fix a value $s>n_*$. Now, we consider $\delta<\epsilon_s-\gamma_s-\epsilon_{s+1}$, where we have $\epsilon_s-\gamma_s-\epsilon_{s+1}>0$ because $\epsilon_{s+1}<\frac{\epsilon_s-\gamma_s}{2}$. The idea is to verify that for every $y\in \mathcal{B}(x,\delta)$, it is obtained $\phi(y)=\{Y_*^n \}_{n\in \mathbb{N}}\in W$. By the continuity of $q_{m,n}$ for all $m<n$, it is only necessary to check that $X_*^n\subset Y_*^n$ because $X_*^m=q_{m,n}(X_*^n)\subset q_{m,n}(Y_*^n)=Y_*^m$. From here, we would get $\phi(y)\in W$.

We check that $A_s(x)\subset q_{s,s+1}(A_{s+1}(y))$. If $a_{s+1}\in A_{s+1}(y)$, then we have $d(y,a_{s+1})<\epsilon_{s+1}$. On the other hand, if $b_s\in A_s(x)$, we know that $d(x,b_s)\leq\gamma_s$. Then,
$$d(b_s,a_{s+1})<d(b_s,x)+d(x,y)+d(y,a_{s+1})<\gamma_s+\delta+\epsilon_{s+1}<\gamma_s+\epsilon_s-\gamma_s-\epsilon_{s+1}+\epsilon_{s+1}=\epsilon_s, $$
which means that $b_s\in \mathcal{B}(a_{s+1},\epsilon_s)\cap A_s=q_{s,s+1}(a_{s+1})$, so $A_s(x)\subset q_{s,s+1}(A_{s+1}(y))$, as we wanted. If we apply $q_{n,s}$ to the previous content, then we get
$$X_*^n=q_{n,s}(A_s(x))\subset q_{n,s}(q_{s,s+1}(A_{s+1}(y)))=q_{n,s+1}(A_{s+1}(y))\subset Y_*^n=\bigcup_{m>n}q_{n,m}(A_m(y)). $$ 

Now, we prove the injectivity of $\phi$. If $x\neq y$ we have $d(x,y)>0$, then we take $n_0$ such that for all $n>n_0$, we get $\epsilon_n<\frac{d(x,y)}{16}$. By Proposition \ref{prop:X*finite}, we know that $X_*^n\subset \mathcal{B}(x,2\epsilon_n)$ and $Y_*^n\subset \mathcal{B}(y,2\epsilon_n)$. If $X_*^n\cap Y_*^n\neq \emptyset$, then we can take $a\in X_*^n\cap Y_*^n$. Hence, $d(a,x),d(a,y)<2\epsilon_n$ and we get a contradiction since
$$16\epsilon_n <d(x,y)<d(x,a)+d(a,y)<2\epsilon_n+2\epsilon_n<4\epsilon. $$
Thus,  $X_*^n\cap Y_*^n= \emptyset$ and we conclude that $\phi(x)\neq \phi(y)$.
\end{proof}

Let $\mathcal{X}_*$ denote $\phi(X)\subseteq\mathcal{X}$. We will prove that $\mathcal{X}_*$ is homeomorphic to $X$ and a strong deformation retract of $\mathcal{X}$. Firstly, we verify that $\mathcal{X}_*$ is a homeomorphic copy of $X$ in $\mathcal{X}$, but before enunciating this result we prove a property of $\mathcal{X}_*$ that will be used. 

\begin{prop}\label{prop:hausdorff} $\mathcal{X}_*$ is a Hausdorff space.
\end{prop}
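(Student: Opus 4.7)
The plan is to separate two distinct points $\phi(x), \phi(y) \in \mathcal{X}_*$ by using minimal open neighborhoods in a single coordinate of the inverse limit, chosen at a level fine enough that the diameter bound on elements of $\mathcal{U}_n$ forces disjointness. Since $\phi$ is injective by Proposition \ref{prop:phiInjective}, we have $r := d(x,y) > 0$, and since $\epsilon_n \to 0$ we can pick $n_0 \in \mathbb{N}$ with $8\epsilon_{n_0} < r$.

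The candidate separating neighborhoods are
$$W_x = \pi_{n_0}^{-1}(U_{X_*^{n_0}}) \cap \mathcal{X}_*, \qquad W_y = \pi_{n_0}^{-1}(U_{Y_*^{n_0}}) \cap \mathcal{X}_*,$$
where $U_{X_*^{n_0}} = \{D \in \mathcal{U}_{n_0} : X_*^{n_0} \subseteq D\}$ is the minimal open neighborhood of $X_*^{n_0}$ (and similarly for $Y_*^{n_0}$) in the topology of $\mathcal{U}_{n_0}$ coming from the order $C \leq D \iff D \subseteq C$. These sets are open in $\mathcal{X}_*$ as preimages of open sets under continuous projections, and they contain $\phi(x) = \{X_*^n\}_{n\in\mathbb{N}}$ and $\phi(y) = \{Y_*^n\}_{n\in\mathbb{N}}$ respectively, essentially tautologically.

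To see $W_x \cap W_y = \emptyset$, suppose for contradiction that $\{C_n\}_{n\in\mathbb{N}}$ lies in the intersection. Then $X_*^{n_0} \subseteq C_{n_0}$ and $Y_*^{n_0} \subseteq C_{n_0}$. Note that $X_*^{n_0}$ is non-empty: Lemma \ref{lem:contenidos} gives $\emptyset \neq A_{n_0}(x) \subseteq q_{n_0,n_0+1}(A_{n_0+1}(x)) \subseteq X_*^{n_0}$, and likewise for $Y_*^{n_0}$. Pick any $a \in X_*^{n_0}$ and $b \in Y_*^{n_0}$; by Proposition \ref{prop:X*finite} we have $d(a,x) < 2\epsilon_{n_0}$ and $d(b,y) < 2\epsilon_{n_0}$, hence
$$d(a,b) \geq d(x,y) - d(x,a) - d(b,y) > r - 4\epsilon_{n_0} > 4\epsilon_{n_0}.$$
Since $a, b \in C_{n_0}$ and $C_{n_0} \in \mathcal{U}_{4\epsilon_{n_0}}(A_{n_0})$ has diameter strictly less than $4\epsilon_{n_0}$, this is a contradiction.

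I do not anticipate any real obstacle here: the argument is essentially the quantitative step already carried out at the end of Proposition \ref{prop:phiInjective}, upgraded from a statement about pointwise intersections $X_*^n \cap Y_*^n = \emptyset$ to one about the ambient open sets $U_{X_*^{n_0}}$ and $U_{Y_*^{n_0}}$. The only subtle point is to be explicit that $X_*^{n_0}$ and $Y_*^{n_0}$ are non-empty so that the diameter argument can be applied, which is handled by Lemma \ref{lem:contenidos}.
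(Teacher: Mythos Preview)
Your proof is correct and follows essentially the same approach as the paper: both select a level $n_0$ with $\epsilon_{n_0}$ small relative to $d(x,y)$, take minimal open neighborhoods of $X_*^{n_0}$ and $Y_*^{n_0}$, and derive a contradiction from the diameter bound $\operatorname{diam}(C_{n_0})<4\epsilon_{n_0}$ for any point in the intersection. The only cosmetic differences are that the paper uses the product of minimal neighborhoods over all coordinates $1,\dots,n_0$ (rather than just the $n_0$-th) and a looser constant $\epsilon_{n_0}<d(x,y)/16$, but the contradiction is extracted at level $n_0$ in both arguments.
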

\begin{proof}
Let us take $\phi(x)=\{X_*^n\}_{n\in \mathbb{N}}\neq \{Y_*^n \}_{n\in \mathbb{N}}=\phi(y)$, so $x\neq y$ and $d(x,y)>0$. We consider $n_0$ such that for every $n\geq n_0$ we get $\epsilon_n<\frac{d(x,y)}{16}$. Furthermore, we know that $X_*^n\cap Y_*^n=\emptyset$ for all $n\geq n_0$ by the proof of Proposition \ref{prop:phiInjective}. We consider the following open neighborhoods for $\{ X_*^n\}_{n\in \mathbb{N}}$ and $\{ Y_*^n\}_{n\in \mathbb{N}}$ respectively
\begin{align*}
V_1&=(2_{X_*^1}\times 2_{X_*^2}\times \dots \times 2_{X_*^{n_0}}\times \mathcal{U}_{4\epsilon_{n_0+1}}(A_{n_0+1}) \times \dots )\cap \mathcal{X}_* \\
V_2&=(2_{Y_*^1}\times 2_{Y_*^2}\times \dots \times 2_{Y_*^{n_0}}\times \mathcal{U}_{4\epsilon_{n_0+1}}(A_{n_0+1}) \times \dots )\cap \mathcal{X}_*.
\end{align*}
We argue by contradiction, suppose that the intersection of $V_1$ and $V_2$ is non-empty. Then there exists $\{ Z_*^n\}_{n\in \mathbb{N}}\in V_1\cap V_2$ such that $X_*^m,Y_*^m\subset Z_*^m$ for every $m\leq n_0$. We consider $x_{n_0}\in X_*^{n_0}$ and $y_{n_0}\in Y_*^{n_0}$. It follows that
\begin{align*}
16\epsilon_{n_0}<d(x,y)<d(x,x_{n_0})+d(x_{n_0},y_{n_0})+d(y_{n_0},y)< \\
<2\epsilon_{n_0}+ d(x_{n_0},y_{n_0})+2\epsilon_{n_0}=4\epsilon_{n_0}+d(x_{n_0},y_{n_0}).
\end{align*}
Therefore, $12\epsilon_{n_0}< d(x_{n_0},y_{n_0})$ but $\{Z_n\}_{n\in \mathbb{N}}\in \mathcal{X}_*$, so $diam(Z_{n_0})<4\epsilon_{n_0}$, which leads to a contradiction.
\end{proof}

\begin{thm}\label{thm:homeo}
$X$ is homeomorphic to $\mathcal{X}_*$.
\end{thm}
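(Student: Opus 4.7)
The plan is to use the classical fact that a continuous bijection from a compact space onto a Hausdorff space is automatically a homeomorphism. All of the ingredients have already been assembled in the preceding propositions, so the argument should essentially be a one-paragraph assembly.

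First I would observe that, by the very definition $\mathcal{X}_* = \phi(X)$, the map $\phi : X \to \mathcal{X}_*$ is surjective, and by Proposition \ref{prop:phiInjective} it is injective, so it is a continuous bijection onto $\mathcal{X}_*$ (where $\mathcal{X}_*$ carries the subspace topology inherited from $\mathcal{X} \subseteq \Pi_{n\in\mathbb{N}} \mathcal{U}_n$). Continuity of the corestriction $X \to \mathcal{X}_*$ is immediate from continuity of $\phi : X \to \mathcal{X}$.

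Next I would invoke compactness of $X$ (as a compact metric space) and the Hausdorffness of $\mathcal{X}_*$ established in Proposition \ref{prop:hausdorff}. These two facts together imply that $\phi : X \to \mathcal{X}_*$ is a closed map: any closed subset $F \subseteq X$ is compact, so $\phi(F)$ is compact in $\mathcal{X}_*$ by continuity, and every compact subset of a Hausdorff space is closed. A continuous closed bijection is a homeomorphism, so $\phi$ restricted to its image is the required homeomorphism $X \cong \mathcal{X}_*$.

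There is essentially no obstacle here: the non-trivial content was already carried out in the preceding propositions (injectivity and continuity of $\phi$, and Hausdorffness of $\mathcal{X}_*$). The only point worth mentioning explicitly is why the subspace $\mathcal{X}_* \subseteq \mathcal{X}$ is Hausdorff without $\mathcal{X}$ itself being so in general; this is exactly the content of Proposition \ref{prop:hausdorff}, which I would cite by name rather than reproduce.
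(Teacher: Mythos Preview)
Your proposal is correct and matches the paper's own proof essentially verbatim: the paper simply notes that $\phi:X\to\mathcal{X}_*$ is a continuous bijection from a compact (Hausdorff) space to a Hausdorff space (Propositions~\ref{prop:phiInjective} and~\ref{prop:hausdorff}) and concludes it is a homeomorphism. Your added explanation of the closed-map argument is a standard unpacking of this classical fact and introduces no new ideas.
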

\begin{proof}
We have that $\phi:X\rightarrow \mathcal{X}_*$ is a continuous bijective map between a compact Hausdorff space and a Hausdorff space. Thus, $\phi$ is a homeomorphism.
\end{proof}

\begin{rem} We can also prove Theorem \ref{thm:homeo} without Proposition \ref{prop:hausdorff}. We have that $\varphi_{|\mathcal{X}_*}:\mathcal{X}_*\rightarrow X$ is a continuous and bijective function. On the other hand, $\phi:X\rightarrow \mathcal{X}_*$ is a continuous and bijective function that verifies $\phi\circ \varphi_{|\mathcal{X}_*}=id_{\mathcal{X}_*}$.
\end{rem}

\begin{thm}\label{thm:strongretract}
$\mathcal{X}_*$ is a strong deformation retract of $\mathcal{X}$.
\end{thm}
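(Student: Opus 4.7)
The plan is to construct a homotopy
\[
H:\mathcal{X}\times[0,1]\to\mathcal{X}
\]
from $\mathrm{id}_{\mathcal{X}}$ to $\phi\circ\varphi$ that is stationary on $\mathcal{X}_*$. Guided by the standard trick for comparable points in a finite $T_0$ space (a ``jump'' at the endpoint), I would set
\[
H(\{C_n\}_{n\in\mathbb{N}},t)=\begin{cases}\{C_n\}_{n\in\mathbb{N}}&\text{if }0\leq t<1,\\ \phi(\varphi(\{C_n\}_{n\in\mathbb{N}}))&\text{if }t=1.\end{cases}
\]
The endpoint conditions $H(\cdot,0)=\mathrm{id}_{\mathcal{X}}$ and $H(\cdot,1)=\phi\circ\varphi$ are immediate. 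For $\{C_n\}_{n\in\mathbb{N}}\in\mathcal{X}_*$, writing $\{C_n\}_{n\in\mathbb{N}}=\phi(x)$ and using that $\varphi_{|\mathcal{X}_*}$ is the inverse of $\phi$ (Remark following Theorem \ref{thm:homeo}), one gets $\phi(\varphi(\{C_n\}_{n\in\mathbb{N}}))=\{C_n\}_{n\in\mathbb{N}}$, so $H$ is constant in $t$ along $\mathcal{X}_*$.

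Continuity of $H$ on $\mathcal{X}\times[0,1)$ is automatic since there $H$ factors through the first projection. The only delicate case is a point $(\{C_n\}_{n\in\mathbb{N}},1)$, and this is where the opposite partial order plays its role. Given a basic open neighborhood
\[
W=\bigl(U_{X_*^1}\times\cdots\times U_{X_*^{n_0}}\times\mathcal{U}_{n_0+1}\times\mathcal{U}_{n_0+2}\times\cdots\bigr)\cap\mathcal{X}
\]
of $\phi(\varphi(\{C_n\}_{n\in\mathbb{N}}))=\{X_*^n\}_{n\in\mathbb{N}}$, where $U_{X_*^i}=\{D\in\mathcal{U}_i:X_*^i\subseteq D\}$ is the minimal open neighborhood of $X_*^i$, the decisive observation is Proposition \ref{prop:minimal}: $X_*^n\subseteq C_n$ for every $n$. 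Hence $C_i\in U_{X_*^i}$ for every $i\leq n_0$, so $\{C_n\}_{n\in\mathbb{N}}$ itself lies in $W$, and $W$ is simultaneously a neighborhood of both the starting point $\{C_n\}_{n\in\mathbb{N}}$ and the endpoint $\{X_*^n\}_{n\in\mathbb{N}}$ of the ``jump''.

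Combining $W$ with a neighborhood $W''$ of $\{C_n\}_{n\in\mathbb{N}}$ supplied by the already-established continuity of $\phi\circ\varphi$ (so that $\phi\circ\varphi(W'')\subseteq W$), I would take $W'=W\cap W''$ and verify $H(W'\times[0,1])\subseteq W$: for $t<1$, $H(\{D_n\}_{n\in\mathbb{N}},t)=\{D_n\}_{n\in\mathbb{N}}\in W'\subseteq W$; for $t=1$, $H(\{D_n\}_{n\in\mathbb{N}},1)=\phi\circ\varphi(\{D_n\}_{n\in\mathbb{N}})\in\phi\circ\varphi(W'')\subseteq W$. The anticipated difficulty is producing a single neighborhood that serves both branches of the piecewise definition at once; the way around it is exactly Proposition \ref{prop:minimal}, which, read through the opposite order, forces every basic neighborhood of $\{X_*^n\}_{n\in\mathbb{N}}$ to already contain the original point $\{C_n\}_{n\in\mathbb{N}}$.
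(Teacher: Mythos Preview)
Your proposal is correct and takes essentially the same approach as the paper: the same ``jump'' homotopy $H$, the same reduction to continuity at $t=1$, the same appeal to continuity of $\phi\circ\varphi$ for the $t=1$ branch, and the same use of Proposition~\ref{prop:minimal} as the crucial ingredient. The only cosmetic difference is that you take $W'=W\cap W''$ so that the $t<1$ branch lands in $W$ automatically, whereas the paper uses only the neighborhood $U$ coming from continuity of $\phi\circ\varphi$ and then invokes Proposition~\ref{prop:minimal} a second time (via the chain $X_*^n\subset Y_*^n\subset D_n$) to push $\{D_n\}$ into $V$; both routes are equivalent.
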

\begin{proof}
It is easy to check that $\varphi\circ \phi :X\rightarrow X$ is the identity map. We will check that $\phi \circ \varphi: \mathcal{X}\rightarrow \mathcal{X}$ is homotopic to the identity map $id_{\mathcal{X}}$. We consider $H:\mathcal{X}\times I \rightarrow \mathcal{X}$ given by 
\[   
H(\{C_n\}_{n\in \mathbb{N}},t) = 
     \begin{cases}
       \{C_n\}_{n\in \mathbb{N}} \quad  t\in [0,1) \\
       \phi(\varphi(\{C_n\}_{n\in \mathbb{N}})) \quad t=1,
     \end{cases}
\]
where $I$ denotes the unit interval. To study the continuity of $H$, it is only necessary to check the continuity at the point $(\{C_n\}_{n\in \mathbb{N}},1)\in \mathcal{X}\times I$. For every neighborhood $W$ of $\phi(\varphi(\{C_n\}_{n\in \mathbb{N}}))=\{X_*^n\}_{n\in \mathbb{N}}$, we can obtain a neighborhood $V$ of the form 
$$V =(2_{X_*^1}\times 2_{X_*^2}\times \dots \times 2_{X_*^{r}}\times \mathcal{U}_{4\epsilon_{r+1}}(A_{r+1}) \times \dots )\cap \mathcal{X} $$
such that $V\subset W$. By the continuity of $\phi \circ \varphi$ we know that there exists an open neighborhood $U$ of $\{C_n \}_{n\in \mathbb{N}}$ with $\phi (\varphi(U))\subset V$. We take $\{D_n\}_{n\in \mathbb{N}}\in U$ and we denote $\phi (\varphi(\{D_n\}_{n\in \mathbb{N}}))=\{Y_*^n\}_{n\in \mathbb{N}}$. By the continuity, $\{Y_*^n\}_{n\in \mathbb{N}}\in V$, so $X_*^n \subset Y_*^{n}$ for every $n\leq r$. By Proposition \ref{prop:minimal}, we also know $Y_*^{m}\subset D_m$ for every $m\in \mathbb{N}$. Concretely, $X_*^n \subset D_{n}$ for every $n\leq r$. Therefore, $H(\{D_n\}_{n\in \mathbb{N}},t)=\{D_n\}_{n\in \mathbb{N}}\in V $ when $t\in [0,1)$ and $H(\{D_n\}_{n\in \mathbb{N}},1)=\{Y_*^n\}_{n\in \mathbb{N}}\in V$ clearly. Thus, $U\times I$ satisfies that $H(U\times I)\subset V$.
\end{proof}

We update the example introduced in Section \ref{section:constructionFASO} with the theory developed in this section. Since the elements of $\mathcal{X}_*$ have a constructive description, they can be computed.
\begin{ex} We study the elements of the inverse limit of the FASO constructed in Example \ref{ex:FASoS1primeraparte} for $S^1$.  By construction, we have that $A_n\subset A_{n+1}$ for every $n\in \mathbb{N}$. We study two cases to get a description of $\mathcal{X}_*$. Firstly, we show one useful property.

\underline{\textbf{Assertion.}} If $a^n_k,a^n_{k+1}\in A_n$, where $k,k+1\in \{0,1,...,2^{3n-4}-1\}$, then $a^n_k $ and $a^n_{k+1}$ are in an arc of $S^1$ formed by two consecutive points $a^{n-1}_l,a^{n-1}_{l+1}\in A_{n-1}$ for some $l$ such that the length of the arc is $\frac{\pi}{2^{3(n-1)-5}}$.

\begin{proof}
We can determine $l$ solving the following equation:
$\frac{2 \pi k}{2^{3n-4}}= \frac{2 \pi l}{2^{3n-7}}$. Then, $l=\frac{k}{2^3}$. We have two possibilities:
\begin{itemize}
\item $l\in \mathbb{N}$, which implies that $a^n_k\in A_{n-1}$ and the result follows easily.
\item $l$ is not a natural number. We define $l^*$ as the integer part of $l$. We will check that $a^{n-1}_{l^*}, a^{n-1}_{l^*+1}\in A_{n-1}$ are the desired points. We know that $k=8a+r$ for some $a$ and $r<8$. Therefore,
$$\frac{2 \pi (k+1)}{2^{3n-4}}= \frac{2 \pi m}{2^{3n-7}}\Rightarrow m=\frac{k+1}{8}=a+\frac{r+1}{8}.$$
If $\frac{r+1}{8}$ is an integer, we are in the first case. We suppose that it is not an integer number. Then, $\frac{r+1}{8}<1$ and the integer part of $m$ is exactly $l^*$.
\end{itemize}
\end{proof}

If $x\in X$, then $\{ X_*^n\}_{n\in \mathbb{N}}$ can only be of two different forms.

\textbf{\underline{Case 1:}}  $x\in S^1$ satisfies that $x=a_s^n\in A_n$ for some $n\in \mathbb{N}$. Therefore, $a_s^n\in A_m$ for every $m$ satisfying $n\leq  m$. We want to describe $\phi (x)=\{X_*^n\}_{n \in \mathbb{N}}$. On the one hand, it is clear that $A_m(a^n_s)=a_s^n$ for every $m\geq n$. On the other hand, $q_{m,m+1}(A_{m+1}(a^n_s))=q_{m,m+1}(a^n_s)=\mathcal{B}(a_s^n,\epsilon_m)\cap A_m=a_s^n$ for every $m\geq n$ because $d(a_s^n,A_m\setminus \{a_s^n\})=\epsilon_m$. From here, we can deduce that $X_*^m=\{a_s^n\}$ for every $m\geq n$. We study $X_*^m$ with $m<n$. By the previous observation, $q_{m,t}(A_t(a_s^n))=q_{m,n}(a_s^n)$ if $t>n$. Clearly, $a_s^n$ is between two consecutive points $a_{k}^{n-1},a_{k+1}^{n-1}\in A_{n-1}$. Therefore, $q_{n-1,n}(a_s^n)=\{ a_{k}^{n-1},a_{k+1}^{n-1}\}$. By the previous assertion, $a_{k}^{n-1}$ and $a_{k+1}^{n-1}$ are between two consecutive points $a_k^{n-2},a_{k+1}^{n-2}\in A_{n-2}$ so $q_{n-2,n-1}(\{a_{k}^{n-1},a_{k+1}^{n-1}\})=\{a_{k}^{n-2},a_{k+1}^{n-2} \}$. Thus, we can deduce that $X_*^m=\{ a_{k}^{m},a_{k+1}^{m} \}$ for every $m<n$ and $m\neq 1$ because $X_*^1=\{e^{2\pi i} \}$. In Figure \ref{fig:caso1} we present a schematic draw of the above description.
$$\{X_*^n\}_{n\in \mathbb{N}}=(\{e^{2\pi i} \}, \{a_k^{1},a_{k+1}^{1} \},  \{a_k^{2},a_{k+1}^{2}\},\dots,  \{a_k^{n-1},a_{k+1}^{n-1} \}, \{a_s^n\}, \{a_s^n \}, \dots \} ) $$

\textbf{\underline{Case 2:}}  $x\in S^1$ such that $x\notin A_n$ for every $n\in \mathbb{N}$. We are interested in the following property: for every $n$ there exists $m>n$ such that $A_m(x)\notin A_{m-1}$. We argue by contradiction. Suppose $A_m(x)\in A_{n+1}$ for every $m>n$, which implies $x\in A_{n+1}$ and then a contradiction. By Proposition \ref{prop:estabiliza}, we know that for each $n$ there exists $n_*>n$ such that $\{X_*^n \}=q_{n,m}(A_m(x))$ for every $m>n_*$. On the other hand, there exists $s>n_*$ with $A_s(x)\notin A_{s-1}$ and $\{X_*^n \}=q_{n,s}(A_s(x))$. We get that $A_s(x)$ is at most two consecutive points $a_k^{s},a_{k+1}^{s}\in A_s$. Furthermore, by the previous property, $a_k^{s},a_{k+1}^{s}$ are between two consecutive points $a_k^{s-1},a_{k+1}^{s-1}\in A_{s-1}$. Hence, $q_{s-1,s}(\{ a_k^{s},a_{k+1}^{s}\})=\{a_k^{s-1},a_{k+1}^{s-1} \}$. Thus, we can deduce that $X_*^n=\{a_k^{n},a_{k+1}^{n}\}$ where $a_k^{n},a_{k+1}^{n}\in A_n$ and x lies in the arc formed by $a_k^{n}$ and $a_{k+1}^{n}$. In Figure \ref{fig:caso2} we present a schematic draw of the above description.

\begin{figure}[!htb]
   \begin{minipage}{0.48\textwidth}
   \centering
     \includegraphics[scale=0.8]{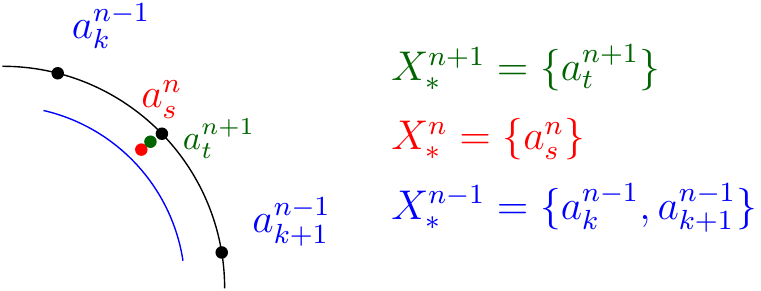}
\caption{Schematic situation when $x$ \newline is equal to $a_n\in A_n$ but $x\notin A_{n-1}$.}\label{fig:caso1}
   \end{minipage}\hfill \quad \quad\quad
   \begin{minipage}{0.48\textwidth}
        \centering
     \includegraphics[scale=0.8]{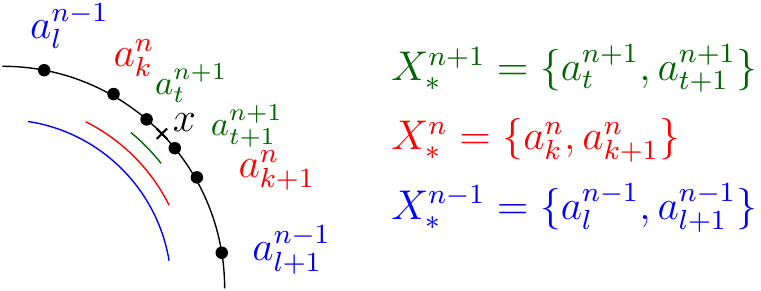}
\caption{Schematic situation when $x\notin A_n$ for every $n\in \mathbb{N}$.}\label{fig:caso2}
   \end{minipage}
\end{figure}
\end{ex}

\section{Uniqueness of the FASO constructed for a compact metric space and relations with other constructions}\label{sec:uniqueness}

Given a compact metric space $(X,d)$, a FASO $(\mathcal{U}_n,q_{n,n+1})$ for $X$ is not unique since it depends on the values of $\epsilon_n$ and the points $A_n$ that we choose. On the other hand, the main results obtained in Section \ref{sec:inverseLimitProperties} do not depend on the values and points chosen. A FASO $(\mathcal{U}_n,q_{n,n+1})$ for $X$ can be seen as an object of pro-$HTop$ because $(\mathcal{U}_n,q_{n,n+1})$ is an inverse sequence, where $HTop$ denotes the homotopical category of topological spaces. For a brief introduction  about pro-categories, see Appendix. For a complete exposition about pro-categories, see \cite{mardevsic1982shape}.

\begin{thm}\label{thm:unicidadFASO} Let $(X,d)$ be a compact metric space. If $(\mathcal{U}_{4\epsilon_n}(A_n),q_{n,n+1})$ is a FASO for $X$ and $(\mathcal{U}_{4\delta_n}(B_n),q_{n,n+1})$ is a different FASO for $X$, then $(\mathcal{U}_{4\epsilon_n}(A_n),q_{n,n+1})$ and $(\mathcal{U}_{4\delta_n}(B_n),q_{n,n+1})$ are isomorphic in pro-$HTop$.
\end{thm}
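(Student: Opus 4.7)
The idea is that any two FASOs admit a common cofinal interleaving in pro-$HTop$, so both are isomorphic to this interleaving and hence to each other. Write the two FASOs as $(\mathcal{U}_{4\epsilon_n}(A_n),q_{n,n+1})$ and $(\mathcal{V}_{4\delta_n}(B_n),r_{n,n+1})$. Since $\epsilon_n,\delta_n\to 0$, I first extract subsequences $(\epsilon_{n_k})$ and $(\delta_{m_k})$ that strictly interleave and decay quickly enough to satisfy the size conditions of Lemma \ref{lem:FASOpreparation} (using Remark \ref{rem:Apaño}): concretely, $\delta_{m_k}<\epsilon_{n_k}/2$ and $\epsilon_{n_{k+1}}<\delta_{m_k}/2$. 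This lets me define continuous cross-maps
\[
\alpha_k:\mathcal{V}_{4\delta_{m_k}}(B_{m_k})\to\mathcal{U}_{4\epsilon_{n_k}}(A_{n_k}),\quad \alpha_k(C)=\bigcup_{x\in C}\mathcal{B}(x,\epsilon_{n_k})\cap A_{n_k},
\]
\[
\beta_k:\mathcal{U}_{4\epsilon_{n_{k+1}}}(A_{n_{k+1}})\to\mathcal{V}_{4\delta_{m_k}}(B_{m_k}),\quad \beta_k(D)=\bigcup_{y\in D}\mathcal{B}(y,\delta_{m_k})\cap B_{m_k},
\]
producing the interleaved inverse sequence
\[
\mathcal{U}_{4\epsilon_{n_1}}(A_{n_1})\xleftarrow{\alpha_1}\mathcal{V}_{4\delta_{m_1}}(B_{m_1})\xleftarrow{\beta_1}\mathcal{U}_{4\epsilon_{n_2}}(A_{n_2})\xleftarrow{\alpha_2}\cdots.
\]

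Next I verify that the compositions of consecutive cross-maps agree, up to homotopy, with the corresponding iterated bonding maps inside each original FASO. Write $\tilde q_{k,k+1}=q_{n_k,n_{k+1}}$ and $\tilde r_{k,k+1}=r_{m_k,m_{k+1}}$ for the iterated FASO bonding maps between the chosen terms. The key homotopy tool is the following fact for the opposite order: two continuous maps $f,g\colon X\to \mathcal{U}_{4\epsilon}(A)$ are homotopic whenever $f\cup g$ (defined pointwise) lands in $\mathcal{U}_{4\epsilon}(A)$, because then $f\leq f\cup g\geq g$ pointwise (recall $C\leq D\iff D\subseteq C$), and continuous maps comparable in the pointwise order on a finite $T_0$ space are homotopic. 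Applying this with $f=\alpha_k\circ\beta_k$ and $g=\tilde q_{k,k+1}$, a direct diameter estimate in the style of Lemma \ref{lem:FASOpreparation}, using $2\epsilon_{n_k}+2\delta_{m_k}+4\epsilon_{n_{k+1}}<4\epsilon_{n_k}$, shows that $f(D)\cup g(D)\in \mathcal{U}_{4\epsilon_{n_k}}(A_{n_k})$; hence $\alpha_k\circ\beta_k\simeq \tilde q_{k,k+1}$. The symmetric statement $\beta_k\circ\alpha_{k+1}\simeq \tilde r_{k,k+1}$ is proved identically.

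Once these two families of homotopies are in hand, the ladder diagram in pro-$HTop$ is essentially automatic: the subsequence $(\mathcal{U}_{4\epsilon_{n_k}}(A_{n_k}),\tilde q_{k,k+1})$ sits inside the interleaved sequence with identity connecting maps on the $\mathcal{U}$-terms and composites $\alpha_k$ giving the remaining squares commuting up to homotopy; similarly for the $\mathcal{V}$-subsequence with the $\beta_k$'s. Since passing to a cofinal subsequence is an isomorphism in pro-$HTop$ (standard in \cite{mardevsic1982shape}) and since any FASO is isomorphic in pro-$HTop$ to any of its cofinal subsequences (the bonding maps in the deleted terms factor through the remaining ones, verifiable again by the union trick), both original FASOs are isomorphic in pro-$HTop$ to the interleaved sequence, and hence to each other.

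The main obstacle is the homotopy commutativity in the second paragraph: one cannot expect the compositions of cross-maps to equal the internal bonding maps on the nose, and the two maps need not even be comparable in the pointwise order. The union-of-images device circumvents this by exhibiting a common lower bound in $\mathcal{U}_{4\epsilon_{n_k}}(A_{n_k})$, which requires a careful bookkeeping of the constants chosen when extracting the interleaved subsequences.
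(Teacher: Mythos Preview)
Your overall strategy is sound and genuinely different from the paper's. The paper builds explicit pro-$HTop$ morphisms $(I_n,I)$ and $(T_n,T)$ between the two FASOs (with index function $I(n)=\min\{l:\delta_l<\epsilon_n/16\}$ and $I_n(C)=\bigcup_{x\in C}\mathcal B(x,\epsilon_n)\cap A_n$, and symmetrically for $T$), then checks directly that the two round-trip composites are homotopic to the respective bonding maps via the same union-of-images device you use. Your route instead manufactures a single interleaved sequence $\mathcal W$ and recognises each FASO as a cofinal subsystem of $\mathcal W$ up to homotopy of bonding maps; since in $HTop$ homotopic bonding maps give literally the same object of pro-$HTop$, and cofinal restriction is an isomorphism, the conclusion follows. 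This packaging is a bit more conceptual and avoids separately verifying the ``morphism'' and ``inverse'' conditions; the paper's approach is more hands-on but needs no auxiliary object.

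There is, however, a genuine numerical slip in your key step. The inequality you display,
\[
2\epsilon_{n_k}+2\delta_{m_k}+4\epsilon_{n_{k+1}}<4\epsilon_{n_k},
\]
is exactly the bound for $\operatorname{diam}\bigl(\alpha_k(\beta_k(D))\bigr)$ alone, not for the union $\alpha_k(\beta_k(D))\cup \tilde q_{k,k+1}(D)$. The iterated bonding map $\tilde q_{k,k+1}=q_{n_k,n_{k+1}}$ moves points by up to $\sum_{j=n_k}^{n_{k+1}-1}\epsilon_j$, which can be arbitrarily close to $2\epsilon_{n_k}$; the mixed-case estimate then gives
\[
d(a,a')<(\epsilon_{n_k}+\delta_{m_k})+4\epsilon_{n_{k+1}}+2\epsilon_{n_k}=3\epsilon_{n_k}+\delta_{m_k}+4\epsilon_{n_{k+1}},
\]
and with your thresholds $\delta_{m_k}<\epsilon_{n_k}/2$, $\epsilon_{n_{k+1}}<\delta_{m_k}/2$ this only yields $<\tfrac{9}{2}\epsilon_{n_k}$, not $<4\epsilon_{n_k}$. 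The fix is painless: thin more aggressively, e.g.\ require $\delta_{m_k}<\epsilon_{n_k}/4$ and $\epsilon_{n_{k+1}}<\delta_{m_k}/4$ (the paper, for the same reason, works with the safe factor $1/16$). With that adjustment your argument goes through; the symmetric estimate for $\beta_k\circ\alpha_{k+1}$ versus $\tilde r_{k,k+1}$ needs the same care.
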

\begin{proof}
Firstly, we define a candidate to be an isomorphism. We consider $I:\mathbb{N}\rightarrow \mathbb{N}$ given by $I(n)=min\{ l\in \mathbb{N}|\delta_l<\frac{\epsilon_n}{16}\}$ and $I_n:\mathcal{U}_{4\delta_{I(n)}}(B_{I(n)})\rightarrow \mathcal{U}_{4\epsilon_n}(A_n)$ given by $I_n(C)=\bigcup_{x\in C}\mathcal{B}(x,\epsilon_n)\cap A_n$. We prove that $(I_n,I)$ is a morphism in pro-$HTop$. For simplicity, we omit some subscripts when there is no confusion.

We prove that $I_n$ is well-defined for every $n\in \mathbb{N}$. Suppose $C\in \mathcal{U}_{4\delta_{I(n)}}(B_{I(n)})$. If $x,y\in I_n(C)$, then there exist $a_x,a_y\in C$ such that $x\in I_n(a_x)$ and $y\in I_n(a_x)$. We get that $d(x,a_x),d(y,a_y)<\epsilon_n$. We also have that $diam(C)<4\delta_{I(n)}<\frac{\epsilon_n}{4}$, which implies that 
$$ d(x,y)<d(x,a_x)+d(a_x,y)+d(a_y,y)<\epsilon_n+\frac{\epsilon_n}{4}+\epsilon_n<4\epsilon_n.$$
Thus, $I_n$ is well-defined for every $n\in \mathbb{N}$. The continuity of $I_n$ follows trivially because $I_n$ clearly preserves the order. Now, we check that for every $m\geq n$, where $n,m\in \mathbb{N}$, the following diagram is commutative up to homotopy.
\[
  \begin{tikzcd}[row sep=large,column sep=huge]
\mathcal{U}_{4\delta_{I(n)}}(B_{I(n)}) \arrow{d}{I_n}  & \mathcal{U}_{4\delta_{I(m)}}(B_{I(m)}) \arrow{d}{I_m} \arrow[l,"q_{I(n),I(m)}"']  \\
\mathcal{U}_{4\epsilon_n}(A_n) &  \mathcal{U}_{4\epsilon_m}(A_m) \arrow{l}{q_{n,m}} 
    \end{tikzcd} 
\] 

Suppose $C\in \mathcal{U}_{4\delta_{I(m)}}(B_{I(m)})$. If $x\in I_n(q(C))$, then there exist $a_x\in B_{I(n)}, b_x\in C$ satisfying that $a_x\in q(b_x)$ and $x\in I_n(a_x)$. Therefore, $d(b_x,a_x)<2\delta_{I(n)}<\frac{\epsilon_n}{8}$ and $d(a_x,x)<\epsilon_n$. If $y\in q(I_m(C))$, then there exist $a_y\in A_m$, $b_y\in C$ such that $y\in q(a_y)$ and $a_y\in I_m(b_y)$. We get $d(y,a_y)<2\epsilon_n$ and $d(a_y,b_y)<\epsilon_m<\frac{\epsilon_n}{2}$. We know $d(b_y,b_x)<4\delta_{I(m)}<\frac{\epsilon_m}{4}<\frac{\epsilon_n}{8}$. Hence,
\begin{align*}
d(x,y)< &d(x,a_x)+d(a_x,b_x)+d(b_x,b_y)+d(b_y,a_y)+d(a_y,y)\\
<& \epsilon_n+\frac{\epsilon_n}{8}+\frac{\epsilon_n}{8}+\frac{\epsilon_n}{2}+2\epsilon_n<4\epsilon_n
\end{align*}
We can conclude that $diam(I_n(q(C))\cup q(I_m(C)))<4\epsilon_n$ for every $C\in \mathcal{U}_{4\delta_{I(m)}}(B_{I(m)})$. Thus, $I_n\circ q\cup q \circ I_m:\mathcal{U}_{4\delta_{I(m)}}(B_{I(m)})\rightarrow \mathcal{U}_{4\epsilon_n}(A_n)$  is well-defined, continuous and satisfies $I_n(q(C))$, $q(I_m(C))\subset I_n(q(C)) \cup q(I_m(C))$ for every $C\in \mathcal{U}_{4\delta_{I(m)}}(B_{I(m)})$. From here, we get that the previous diagram is commutative up to homotopy.

We consider $T:\mathbb{N}\rightarrow \mathbb{N}$ given by $T(n)=min\{ l\in \mathbb{N}|\epsilon_l<\frac{\delta_n}{16}\}$ and $T_n:\mathcal{U}_{4\epsilon_{T(n)}}(A_{T(n)})\rightarrow \mathcal{U}_{4\delta_n}(B_n)$ given by $T_n(C)=\bigcup_{x\in C}\mathcal{B}(x,\delta_n)\cap B_n$. $(T_n,T)$ is a well-defined morphism in pro-$HTop$. To prove the last assertion we only need to repeat the previous arguments.

Now, we prove that $(T_n,T)\circ(I_n, I)$ is homotopic to $(id_\delta,id):(\mathcal{U}_{4\delta_n(B_n)},q_{n,n+1})\rightarrow (\mathcal{U}_{4\delta_n(B_n)},q_{n,n+1})$, where $(id_\delta,id)$ denotes the identity morphism. 

We consider $m\in \mathbb{N}$ satisfying that $m>I(T(n))$. Hence, we need to show that the following diagram is commutative up to homotopy, where $id$ denotes the identity map.
\[
  \begin{tikzcd}[row sep=large,column sep=large]
\mathcal{U}_{4\epsilon_{T(n)}}(A_{T(n)}) \arrow[drr,"T_n"'] & \mathcal{U}_{4\delta_{I(T(n))}}(B_{I(T(n))}) \arrow[l,"I_{T(n)}"'] & \mathcal{U}_{4\delta_m}(B_m) \arrow[l,"q_{I(T(n)),m}"'] \arrow{r}{q_{n,m}}&\mathcal{U}_{4\delta_n}(B_n) \arrow{dl}{id}  \\
& & \mathcal{U}_{4\delta_n}(B_n) &
  \end{tikzcd}
\]
Suppose that $C\in \mathcal{U}_{4\delta_m}(B_m)$. If $x\in q(C)$, then there exists $c_x\in C$ such that $x\in q(c_x)$ and $d(x,c_x)<2\delta_n$. If $y\in T(I(q(C)))$, then there exist $a_y\in A_{T(n)}$, $b_y\in B_{I(T(n))}$ and $c_y\in C$ such that $y\in T(a_y)$, $a_y\in I(b_y)$, $b_y\in q(c_y)$. Therefore, $d(y,a_y)<\delta_n$, $d(a_y,b_y)<\epsilon_{I(n)}<\frac{\delta_n}{16}$, $d(b_y,c_y)<2\delta_{I(T(n))}<\frac{\epsilon_{I(n)}}{8}<\frac{\delta_n}{128}$. On the other hand, $diam(C)<4\delta_{m}$ and $d(c_x,c_y)<4\delta_m<2\delta_{I(T(n))}<\frac{\delta_n}{128}$, which implies
\begin{align*}
d(x,y)<&d(x,c_x)+d(c_x,c_y)+d(c_y,b_y)+d(b_y,a_y)+d(a_y,y)\\
<& 2\delta_n+\frac{\delta_n}{128}+\frac{\delta_n}{128}+\frac{\delta_n}{16}+\delta_n<4\delta_n.
\end{align*}
We get $diam(T(I(q(C)))\cup q(C))<4\delta_n$ for every $C\in \mathcal{U}_{4\delta_m}(B_m)$. Thus, $h=T\circ I\circ q \cup q:\mathcal{U}_{4\delta_m}(B_m)\rightarrow \mathcal{U}_{4\delta_n}(B_n)$ is a well-defined and continuous map. Furthermore, $I(I(q(C))),q(C)\subset h(C)$ for every $C\in \mathcal{U}_{4\delta_m}(B_m)$, which implies that the diagram is commutative up to homotopy.

Repeating the same arguments, it can be deduced that $(I_n,I)\circ(T_n, T)$ is homotopic to the identity morphism $(id_\epsilon,id):(\mathcal{U}_{4\epsilon_n}(A_n),q_{n,n+1})\rightarrow (\mathcal{U}_{4\epsilon_n}(A_n),q_{n,n+1})$.
\end{proof}

Now, we see the relations between the inverse sequences considered throughout this manuscript as objects of pro-$HTop$. Firstly, given a compact metric space $(X,d)$, it can be considered the FAS or Main Construction $(\mathcal{U}_{2\epsilon_n}(A_n), p_{n,n+1})$ for $X$, \cite{moron2008connectedness,mondejar2020reconstruction}. We can consider the opposite order for each term of this inverse sequence. Then, we can compare this inverse sequence with a FASO $(\mathcal{U}_{4\delta_n}(B_n), q_{n,n+1})$ for $X$.

\begin{thm}\label{thm:FASOequalFAS} Given a compact metric space $(X,d)$. If $(\mathcal{U}_{2\epsilon_n}(A_n), p_{n,n+1})$ is a FAS for $X$, where each term is considered with the opposite order, then every FASO $(\mathcal{U}_{4\delta_n}(B_n),q_{n,n+1})$ for $X$ is isomorphic to $(\mathcal{U}_{2\epsilon_n}(A_n), p_{n,n+1})$ in pro-$HTop$.
\end{thm}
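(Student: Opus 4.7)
The plan is to follow the strategy of Theorem~\ref{thm:unicidadFASO}: construct morphisms in pro-$HTop$ in both directions between the two inverse sequences and verify that they are mutually inverse. The structural difference from Theorem~\ref{thm:unicidadFASO} is that the two inverse sequences use \emph{different} bonding-map rules. The FAS uses the nearest-neighbour rule $p_{n,n+1}(C) = \bigcup_{c\in C}\{a\in A_n \mid d(c,a) = d(c, A_n)\}$, while the FASO uses the ball rule $q_{n,n+1}(C) = \bigcup_{c\in C}\mathcal{B}(c,\delta_n)\cap B_n$. Both sequences carry the opposite order.

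I would define each morphism using the construction that is compatible with the target's diameter bound. In the FAS-to-FASO direction, choose an index function $I:\mathbb{N}\to\mathbb{N}$ with $\epsilon_{I(n)} < \delta_n$ and set
\[
I_n:\mathcal{U}_{2\epsilon_{I(n)}}(A_{I(n)}) \longrightarrow \mathcal{U}_{4\delta_n}(B_n),\qquad I_n(C) = \bigcup_{c\in C}\mathcal{B}(c,\delta_n)\cap B_n.
\]
A direct triangle inequality gives $diam(I_n(C)) < 2\delta_n + 2\epsilon_{I(n)} < 4\delta_n$, so $I_n$ is well-defined; continuity in the opposite order follows because $C \mapsto \bigcup_{c\in C}(\cdot)$ is monotone with respect to $\subseteq$. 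In the FASO-to-FAS direction, choose $T:\mathbb{N}\to\mathbb{N}$ with $\delta_{T(n)} < (\epsilon_n-\gamma_n)/2$, where $\gamma_n = \sup_{x\in X}d(x,A_n)$, and set
\[
T_n:\mathcal{U}_{4\delta_{T(n)}}(B_{T(n)})\longrightarrow \mathcal{U}_{2\epsilon_n}(A_n),\qquad T_n(C) = \bigcup_{c\in C}\{a\in A_n \mid d(c,a)=d(c, A_n)\}.
\]
Here the nearest-neighbour rule is forced: the tighter target diameter $2\epsilon_n$ admits an estimate only through $\gamma_n$ rather than $\epsilon_n$, exactly as in Lemma~\ref{lem:mainpreparacion}, and one obtains $diam(T_n(C)) < 2\gamma_n + 4\delta_{T(n)} < 2\epsilon_n$.

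To check that $(I_n,I)$ is a morphism in pro-$HTop$, the naturality squares involving the bonding maps $p$ and $q$ must commute up to homotopy. Here I would reuse the union trick of Theorem~\ref{thm:unicidadFASO}: for $m\geq n$ and $C\in \mathcal{U}_{2\epsilon_{I(m)}}(A_{I(m)})$, define
\[
h(C) = I_n(p_{I(n),I(m)}(C)) \cup q_{n,m}(I_m(C)),
\]
and verify by chained triangle inequalities that $diam(h(C)) < 4\delta_n$, so $h$ lands in $\mathcal{U}_{4\delta_n}(B_n)$. Since $h(C)$ contains each of the two compositions as a subset, it lies below them in the opposite order, and the straight-line construction of Proposition~\ref{prop:contprojection} provides continuous homotopies $h \simeq I_n\circ p_{I(n),I(m)}$ and $h \simeq q_{n,m}\circ I_m$, from which the required homotopy follows by transitivity. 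The analogous argument works for $(T_n,T)$.

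To show that $(T_n,T)\circ(I_n,I) \simeq \mathrm{id}$ and $(I_n,I)\circ(T_n,T) \simeq \mathrm{id}$, one again applies the union trick: for $m>I(T(n))$ (respectively $m>T(I(n))$) the maps $T_n\circ I_{T(n)}\circ p_{I(T(n)),m} \cup p_{n,m}$ and $I_n\circ T_{I(n)}\circ q_{T(I(n)),m}\cup q_{n,m}$ must be shown to take values in $\mathcal{U}_{2\epsilon_n}(A_n)$ and $\mathcal{U}_{4\delta_n}(B_n)$ respectively. The main obstacle, as in Theorem~\ref{thm:unicidadFASO}, is bookkeeping: carefully balancing the interleaved constants $\epsilon_n,\delta_n,\gamma_n$ through several compositions so that every union map stays within the allowed diameter at every stage. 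Conceptually no new idea is needed beyond the technique already developed for Theorem~\ref{thm:unicidadFASO}; the presence of two distinct construction rules only means that the numerical estimates require slightly more care.
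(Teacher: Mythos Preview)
Your approach is correct, but it takes a different and longer route than the paper. You directly construct morphisms $(I_n,I)$ and $(T_n,T)$ between the FAS and an arbitrary FASO, mirroring the proof of Theorem~\ref{thm:unicidadFASO}, and then verify both compositions are homotopic to the identity via the union trick. The paper instead factors the problem: it observes that the \emph{same} data $(\epsilon_n,A_n)$ underlying the FAS also yields a FASO $(\mathcal{U}_{4\epsilon_n}(A_n),q_{n,n+1})$, so by Theorem~\ref{thm:unicidadFASO} this is already isomorphic to any other FASO $(\mathcal{U}_{4\delta_n}(B_n),q_{n,n+1})$. Thus the only new work is comparing $(\mathcal{U}_{2\epsilon_n}(A_n),p_{n,n+1})$ with $(\mathcal{U}_{4\epsilon_n}(A_n),q_{n,n+1})$, which share the same underlying approximations. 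There the paper uses the obvious inclusion $i_n:\mathcal{U}_{2\epsilon_n}(A_n)\hookrightarrow\mathcal{U}_{4\epsilon_n}(A_n)$ as a \emph{level} morphism, builds a one-sided inverse $g_n(C)=\overline{\mathcal{B}}(C,\gamma_n)\cap A_n$, and invokes Morita's lemma (Theorem~\ref{thm:Moritalema}) rather than checking two round-trip compositions. Your method would work and is conceptually uniform with Theorem~\ref{thm:unicidadFASO}, but the paper's factorisation avoids redoing the interleaved-constants bookkeeping and replaces it with a single clean inclusion plus an appeal to Morita, which is considerably shorter.
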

\begin{proof}
It is easy to show that $(\mathcal{U}_{4\epsilon_n}(A_n),q_{n,n+1})$ is a FASO for $X$. By Theorem \ref{thm:unicidadFASO}, we have that $(\mathcal{U}_{4\epsilon_n}(A_n),q_{n,n+1})$ is isomorphic to $(\mathcal{U}_{4\delta_n}(B_n),q_{n,n+1})$. Therefore, it only remains to show that $(\mathcal{U}_{4\epsilon_n}(A_n),q_{n,n+1})$ is isomorphic to $(\mathcal{U}_{2\epsilon_n}(A_n), p_{n,n+1})$.

We have a natural inclusion $i_n:\mathcal{U}_{2\epsilon}(A_n)\rightarrow  \mathcal{U}_{4\epsilon}(A_n)$ for every $n\in \mathbb{N}$. The following diagram is commutative up to homotopy due to the fact that $p(i(C))\subseteq i(q(C))$ for every $C\in \mathcal{U}_{2\epsilon_{n}}(A_n)$.

\[
  \begin{tikzcd}[row sep=large,column sep=huge]
\mathcal{U}_{2\epsilon_n}(A_n) \arrow{d}{i} &\mathcal{U}_{2\epsilon_{n+1}}(A_{n+1})  \arrow{d}{i}  \arrow{l}{p_{n,n+1}}  \\
\mathcal{U}_{4\epsilon_n} (A_n)    & \mathcal{U}_{4\epsilon_{n+1}}(A_{n+1}) \arrow{l}{q_{n,n+1}}
  \end{tikzcd}
\]
Then, we have a level morphism between the two inverse sequences considered. For every $m>n$, we define $g_n:\mathcal{U}_{4\epsilon_m}(A_m)\rightarrow \mathcal{U}_{2\epsilon_n}(A_n)$  given by 
$$g_n(C)=\overline{\mathcal{B}}(C,\gamma_n)\cap A_n=\bigcup_{x\in C}\overline{\mathcal{B}}(x,\gamma_n)\cap A_n,$$ 
where $\overline{\mathcal{B}}(x,\gamma_n)$ denotes the closed ball of radius $\gamma_n$ and $\gamma_n=\sup \{ d(x,A_n)|x\in X\}$. We prove that $g_n$ is well-defined. If $x,y\in g_n(C)$, then there exist $c_x,c_y\in C$ with $x\in g_n(c_x),y\in g_n(c_y)$, so $d(x,c_x),d(y,c_y)\leq \gamma_n$. We also know that $diam(C)<4\epsilon_m$, which implies $d(c_x,c_y)<4\epsilon_m<2\epsilon_n-2\gamma_n$. Therefore,
$$d(x,y)<d(x,c_x)+d(c_x,c_y)+d(c_y,y)<\gamma_n+2\epsilon_n-2\gamma_n+\gamma_n=2\epsilon_n $$
The continuity of $g_n$ for every $n\in \mathbb{N}$ follows trivially. Since $g_n(i(C)) \subset p(C)$ and $q(C) \subset i(g_n(C))$, we get that $g_n\circ i$ is homotopic to $p$ and $q$ is homotopic to $i\circ g_n$. By Morita's lemma (Theorem \ref{thm:Moritalema}), see \cite{morita1974hurewicz} or \cite[Chapter 2, Theorem 5]{mardevsic1982shape}, we get the desired result.
\end{proof}

Given a polyhedron $K$, we can obtain an inverse sequence of finite topological spaces using the theory developed previously or we can apply the construction made in \cite{clader2009inverse}. Let $(X^n,h_{n,n+1})$ denote the construction obtained in \cite{clader2009inverse}, where $X^n$ denotes the $n$-th finite barycentric subdivision of $\mathcal{X}(K)$ with the opposite order and $h_{n,n+1}$ is the natural map which sends the chain $x_1<...<x_n$ of $X^{n}$ to $x_1\in X^n$. We introduce a bit of notation. Let $\mathcal{X}(K)$ denote the face poset of $K$. Given a poset $X$. Let $\mathcal{K}(X)$ denote the order complex of $X$. The finite barycentric subdivision of $X$ is given by $\mathcal{X}(\mathcal{K}(X))$. For a complete exposition about this and other properties, see \cite{may1966finite}.

\begin{thm}\label{thm:claderComparacionFASO} Let $K$ be a finite polyhedron. If $(\mathcal{U}_{4\epsilon_n}(A_n),q_{n,n+1})$ is a FASO for $K$, then there is a natural morphism $(i_n,id):(X^n,h_{n,n+1})\rightarrow(\mathcal{U}_{4\epsilon_n},q_{n,n+1})$ in pro-$HTop$.
\end{thm}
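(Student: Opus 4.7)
The plan is to exhibit a level morphism $(i_n, id)$ in pro-$HTop$, defining each $i_n$ geometrically from the barycentric-subdivision interpretation of $X^n$ together with the metric structure on $K$. By Theorem \ref{thm:TeoremaAlexandroff}, the finite poset $X^n$ is canonically identified with the face poset of the $n$-th barycentric subdivision $K^{(n)}$ of $K$, so each $\sigma \in X^n$ corresponds to a closed simplex $\Delta_\sigma \subseteq |K|$ of $K^{(n)}$, and the opposite order becomes reverse face inclusion: $\sigma \leq_{op} \tau$ iff $\Delta_\tau \subseteq \Delta_\sigma$. With this identification I set
\begin{equation*}
i_n(\sigma) = \mathcal{B}(\Delta_\sigma, \epsilon_n) \cap A_n.
\end{equation*}
This set is non-empty by the $\epsilon_n$-approximation property of $A_n$, and its diameter is bounded by $\operatorname{diam}(\Delta_\sigma) + 2\epsilon_n \leq \operatorname{mesh}(K^{(n)}) + 2\epsilon_n$. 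Provided $n$ is large enough that $\operatorname{mesh}(K^{(n)}) < 2\epsilon_n$, the set $i_n(\sigma)$ lies in $\mathcal{U}_{4\epsilon_n}(A_n)$. Order-preservation (equivalently, continuity, by Proposition \ref{prop:continuidadpreservaorden}) is immediate from the definition: $\Delta_\tau \subseteq \Delta_\sigma$ forces $i_n(\tau) \subseteq i_n(\sigma)$, which is the correct direction for the opposite order on $\mathcal{U}_{4\epsilon_n}(A_n)$.

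For the homotopy commutativity of
\begin{equation*}
\begin{tikzcd}
X^{n+1} \arrow[r, "h_{n,n+1}"] \arrow[d, "i_{n+1}"'] & X^n \arrow[d, "i_n"] \\
\mathcal{U}_{4\epsilon_{n+1}}(A_{n+1}) \arrow[r, "q_{n,n+1}"] & \mathcal{U}_{4\epsilon_n}(A_n),
\end{tikzcd}
\end{equation*}
the key geometric observation is that for every $c \in X^{n+1}$ the simplex $\Delta_c$ of $K^{(n+1)}$ lies inside the simplex $\Delta_{h_{n,n+1}(c)}$ of $K^{(n)}$ that it refines. From this one deduces both $i_n(h_{n,n+1}(c)) \subseteq \mathcal{B}(\Delta_{h_{n,n+1}(c)}, \epsilon_n) \cap A_n$ and $q_{n,n+1}(i_{n+1}(c)) \subseteq \mathcal{B}(\Delta_{h_{n,n+1}(c)}, \epsilon_n + \epsilon_{n+1}) \cap A_n$. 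I introduce the auxiliary map
\begin{equation*}
H(c) = i_n(h_{n,n+1}(c)) \cup q_{n,n+1}(i_{n+1}(c)),
\end{equation*}
whose diameter is therefore at most $\operatorname{mesh}(K^{(n)}) + 2(\epsilon_n + \epsilon_{n+1})$. Whenever $\operatorname{mesh}(K^{(n)}) < 2(\epsilon_n - \epsilon_{n+1})$, the map $H$ takes values in $\mathcal{U}_{4\epsilon_n}(A_n)$ and is continuous, since unions are monotone and $h_{n,n+1}$, $i_n$, $i_{n+1}$, $q_{n,n+1}$ are all order-preserving. By construction $H(c) \supseteq i_n(h_{n,n+1}(c))$ and $H(c) \supseteq q_{n,n+1}(i_{n+1}(c))$ as subsets of $A_n$, which in the opposite order of $\mathcal{U}_{4\epsilon_n}(A_n)$ means $H \leq i_n \circ h_{n,n+1}$ and $H \leq q_{n,n+1} \circ i_{n+1}$ pointwise; the standard fact that pointwise-comparable continuous maps between finite $T_0$ spaces are homotopic then yields $i_n \circ h_{n,n+1} \simeq H \simeq q_{n,n+1} \circ i_{n+1}$, as required.

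The main obstacle is guaranteeing the diameter inequality $\operatorname{mesh}(K^{(n)}) < 2(\epsilon_n - \epsilon_{n+1})$ uniformly in $n$. For an arbitrary FASO this can fail at finitely many initial indices, but since $\operatorname{mesh}(K^{(n)}) \to 0$ and the parameters of a FASO can be chosen to decay arbitrarily slowly to zero, by Theorem \ref{thm:unicidadFASO} we may replace the given FASO by an isomorphic one in pro-$HTop$ for which the inequality holds for every $n$ (equivalently, one may first produce the morphism on a cofinal subsequence and transfer it through the pro-$HTop$ isomorphism). This completes the construction of the natural morphism $(i_n, id)$.
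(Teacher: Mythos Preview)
Your geometric definition $i_n(\sigma)=\mathcal{B}(\Delta_\sigma,\epsilon_n)\cap A_n$ and the union-homotopy argument are sound and in the same spirit as the paper. The genuine gap is in the last paragraph. The claim that ``the parameters of a FASO can be chosen to decay arbitrarily slowly'' is false: by the very definition of a FASO one has $\epsilon_{n+1}<\tfrac{\epsilon_n-\gamma_n}{2}<\tfrac{\epsilon_n}{2}$, hence $\epsilon_n<\epsilon_1\cdot 2^{-(n-1)}$ no matter what choices are made. On the other hand $\operatorname{mesh}(K^{(n)})$ decays only like $\bigl(\tfrac{d}{d+1}\bigr)^{n}$ with $d=\dim K$, and for $d\geq 2$ this is strictly slower than $2^{-n}$. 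Consequently your inequality $\operatorname{mesh}(K^{(n)})<2\epsilon_n$ (and a fortiori the sharper one $\operatorname{mesh}(K^{(n)})<2(\epsilon_n-\epsilon_{n+1})$) fails for \emph{all} sufficiently large $n$ in \emph{every} FASO, not merely at finitely many initial indices. Neither replacing the FASO via Theorem~\ref{thm:unicidadFASO} nor discarding finitely many terms can rescue the level morphism as written.

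The repair---and this is precisely what the paper does---is to pass to a cofinal subsequence on the \emph{Clader} side rather than on the FASO side: choose $t_1<t_2<\cdots$ with $\operatorname{mesh}(K^{(t_n)})$ small relative to $\epsilon_n$ (possible since the mesh tends to zero), define your maps from $X^{t_n}$ into $\mathcal{U}_{4\epsilon_n}(A_n)$, and then use the restriction isomorphism $(X^{t_n})\cong(X^n)$ in pro-$HTop$. The paper executes this with a particularly tidy choice: it builds an auxiliary FASO $(\mathcal{U}_{4\sigma_n}(B_n),q_{n,n+1})$ whose approximating set $B_n$ is the vertex set of $K^{t_n}$, so that $X^{t_n}=\mathcal{X}(K^{t_n})$ sits inside $\mathcal{U}_{4\sigma_n}(B_n)$ as a subposet and the comparison map is literally inclusion; Theorem~\ref{thm:unicidadFASO} then identifies this auxiliary FASO with the given one. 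Your parenthetical about cofinal subsequences gestures in the right direction, but you must specify that it is the tower $(X^n)$ that gets thinned; once that is made explicit, your argument coincides with the paper's.
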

%
\begin{proof} 
We construct a new FASO $(\mathcal{U}_{4\sigma_n}(B_n),q_{n,n+1})$ for $K$ and a morphism $(i_n,id):(X^n,h_{n,n+1})\rightarrow (\mathcal{U}_{4\sigma_n}(B_n),q_{n,n+1})$. Then, by Theorem \ref{thm:unicidadFASO}, we can conclude.  

\begin{figure}[h]
\centering
\includegraphics[scale=0.9]{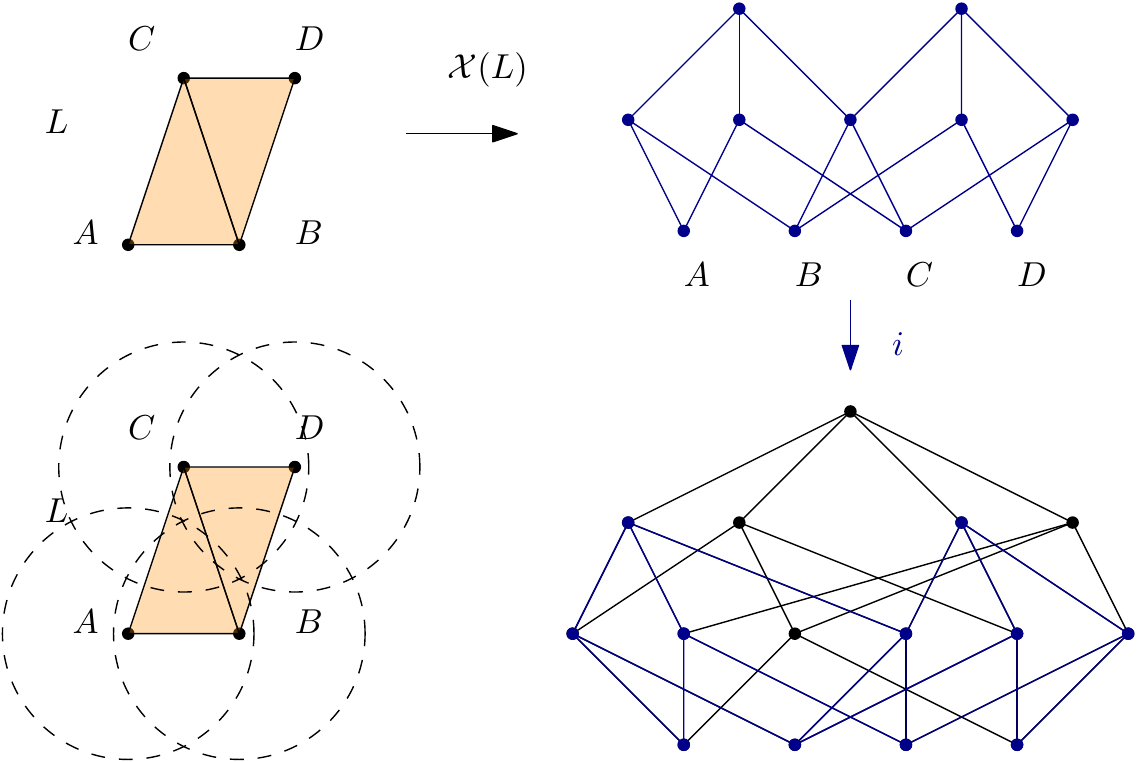}
\caption{Representation of the simplicial complex $L$ and Hasse diagrams of $\mathcal{X}(L)$ and $\mathcal{U}_{4\epsilon_n}(\{A,B,C,D \})$.}\label{fig:claderRelacion}
\end{figure}

We start taking $\sigma_1>diam(K)$, $B_1$ is the set of vertices of $K$ and $X^1=\mathcal{X}(K)$. We consider $\delta_1=\sup\{d(x,B_1)|x\in K\}$ and $\sigma_2<\frac{\sigma_1-\delta_1}{2}$. Applying barycentric subdivisions of $K$, we obtain that there exists $t_2\in \mathbb{N}$ such that the set of vertices of $K^{t_2}$ is a $\sigma_2$-approximation $B_2$ of $K$, $X^2=\mathcal{X}(K^{t_2})$ is a subposet of $\mathcal{U}_{4\sigma_2}(B_2)$ and $ q_{2,1}\circ i \geq i \circ h_{t_2,t_1}$, where $i$ denotes the inclusion map with abuse of notation.  We can deduce the last assertion using that the diameters of the simplices obtained after a barycentric subdivision are smaller than the original ones, see \cite{hatcher2000algebraic} or \cite{spanier1981algebraic}. In Figure \ref{fig:claderRelacion}, there is an example of the situation described above.

Arguing inductively, we can obtain a FASO $(\mathcal{U}_{4\sigma_n}(B_n),q_{n,n+1})$ for $K$ such that $X^{t_n}$ is a subposet of $\mathcal{U}_{4\sigma_n}(B_n)$. The set $\{t_n\}_{n\in \mathbb{N}}$ is a cofinal subset of $\mathbb{N}$. Then $(X^n,h_{n,n+1})$ is isomorphic to $(X^{t_n},h_{n,n+1},\{t_n\}_{n\in \mathbb{N}})$ in pro-$HTop$. We have that the following diagram commutes up to homotopy.
\[
  \begin{tikzcd}[row sep=large,column sep=huge]
\mathcal{U}_{4\sigma_n}(B_n) &\mathcal{U}_{4\sigma_{n+1}}(B_{n+1})    \arrow{l}{q_{n,n+1}}  \\
X^{t_n} \arrow{u}{i}    & X^{t_{n+1}}\arrow{u}{i} \arrow{l}{h_{t_n,t_{n+1}}}
  \end{tikzcd}
\]
\end{proof}

\begin{rem} In general, the continuous inclusion $i:X^{t_n}\rightarrow \mathcal{U}_{4\sigma_n}(B_n)$ considered in the proof of Theorem \ref{thm:claderComparacionFASO} is not a homotopy equivalence. 
\end{rem}

Given a compact metric space $X$, it is proved in \cite{mondejar2015hyperspaces} the following: if the functor $\mathcal{K}$ (see \cite{mccord1966singular}) is applied to the Main Construction, then it is obtained a $HPol$-expansion of $X$ (see \cite{mardevsic1982shape}). If $X$ is a finite $T_0$ topological space where $X$ denotes the poset with the natural order and $X^o$ denotes the poset with the opposite order, then $\mathcal{K}(X)=\mathcal{K}(X^o)$. From this, we deduce that if $(\mathcal{U}_{4\epsilon_n}(A_n),q_{n,n+1})$ is a FASO for $X$ and the functor $\mathcal{K}$ is applied to $(\mathcal{U}_{4\epsilon_n}(A_n),q_{n,n+1})$, then we get a $HPol$-expansion of $X$. This means that we can use the FASO for a compact metric space to reconstruct algebraic invariants of $X$.
\begin{rem}\label{rem:Apaño2}
Given a compact metric space $(X,d)$ and a FASO $(\mathcal{U}_{4\epsilon_n}(A_n),q_{n,n+1})$ for $X$. We can consider a decreasing sequence of positive values $\{\tau_n \}_{n\in\mathbb{N}}$ in the hypothesis of Remark \ref{rem:Apaño}, which is clearly less restrictive. For every $n\in \mathbb{N}$ we consider a $\tau_n$-approximation $B_n$ for $X$. We get an inverse sequence $(\mathcal{U}_{4\tau_n}(B_n),q_{n,n+1})$. Repeating the same arguments used in the proof of Theorem \ref{thm:unicidadFASO}, we can obtain that $(\mathcal{U}_{4\epsilon_n}(A_n),q_{n,n+1})$ is isomorphic to $(\mathcal{U}_{4\tau_n}(B_n),q_{n,n+1})$ in pro-$HTop$. This result is important for computational reasons because we can ignore a hard hypothesis to check, that is, it is not necessary to compute $\gamma_n=sup\{d(x,A_n)|x\in X \}$. But removing the previous hypothesis, we cannot expect to get Theorem \ref{thm:homeo} and Theorem \ref{thm:strongretract} for the new inverse sequence. This is due to the fact that  $(\mathcal{U}_{4\epsilon_n}(A_n),q_{n,n+1})$ and  $(\mathcal{U}_{4\tau_n}(B_n),q_{n,n+1})$ are just isomorphic in pro-$HTop$ and not in pro-$Top$, the pro-category of topological spaces. On the other hand, since both inverse sequences are isomorphic in pro-$HTop$ we get that $(\mathcal{U}_{4\tau_n}(B_n),q_{n,n+1})$ serves to approximate algebraic invariants of $X$ like the homology groups. If we apply the homological functor $H_*$ to $(\mathcal{U}_{4\epsilon_n}(A_n),q_{n,n+1})$ and $(\mathcal{U}_{4\tau_n}(B_n),q_{n,n+1})$, then we get two inverse sequences of groups that are isomorphic in the pro-category of groups, which means that the inverse limits of the inverse sequences of groups are the same. Thus, the new inverse sequence can be used in order to approximate or get the \v{C}ech homology groups of $X$. Note that if $X$ is a $CW$-complex, then the singular homology of $X$ coincides with the \v{C}ech homology of $X$.
\end{rem}

As an immediate consequence of this result and the previous remark we get the following result.
\begin{prop}[Reconstruction of homology groups]\label{prop:ReconstructionHomology} Let $(X,d)$ be a compact metric space and let $\{ \tau_n\}_{n\in \mathbb{N}}$ be a sequence of positive real values satisfying that $\tau_{n+1}<\frac{\tau_n}{2}$ for every $n\in \mathbb{N}$. Then for every inverse sequence $(\mathcal{U}_{\tau_n}(B_n),q_{n,n+1})$, where $B_n$ is a $\tau_n$-approximation, the inverse limit of $(H_k(\mathcal{U}_{\tau_n}(B_n)),H_k(q_{n,n+1}))$ is isomorphic to the $k$-th \v{C}ech homology group of $X$.
\end{prop}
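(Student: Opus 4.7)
The proposition is essentially a formalization of the observation made in Remark \ref{rem:Apaño2}, so the plan is to string together the ingredients already assembled in that remark and in the surrounding discussion. Setting $\epsilon_n = \tau_n/4$ just absorbs the factor of $4$ appearing in the FASO notation; after this cosmetic adjustment the hypothesis $\tau_{n+1}<\tau_n/2$ is exactly the relaxed hypothesis considered in Remark \ref{rem:Apaño} (and used in Remark \ref{rem:Apaño2}). So the first step is to argue that the given inverse sequence $(\mathcal{U}_{\tau_n}(B_n),q_{n,n+1})$ is isomorphic in pro-$HTop$ to a FASO $(\mathcal{U}_{4\epsilon_n}(A_n),q_{n,n+1})$ for $X$, by repeating the construction of the maps $(I_n,I)$ and $(T_n,T)$ of the proof of Theorem \ref{thm:unicidadFASO} (the bounds go through verbatim because the only property used is $\tau_{n+1}<\tau_n/2$, together with Morita's lemma to pass from the level maps to a genuine isomorphism in pro-$HTop$).

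Next, I would apply the McCord order-complex functor $\mathcal{K}$ to this FASO. As recalled immediately before the remark, applying $\mathcal{K}$ to the Main Construction yields an $HPol$-expansion of $X$, and since $\mathcal{K}(P)=\mathcal{K}(P^{o})$ for every finite poset $P$, the same conclusion holds when $\mathcal{K}$ is applied to the FASO. Therefore $(\mathcal{K}(\mathcal{U}_{4\epsilon_n}(A_n)),\mathcal{K}(q_{n,n+1}))$ is an $HPol$-expansion of $X$. By the definition of \v{C}ech homology via $HPol$-expansions, the inverse limit of $(H_k(\mathcal{K}(\mathcal{U}_{4\epsilon_n}(A_n))),H_k(\mathcal{K}(q_{n,n+1})))$ is the $k$-th \v{C}ech homology group $\check{H}_k(X)$.

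Now I would invoke McCord's theorem, which gives a natural weak homotopy equivalence $\mathcal{K}(P)\to P$ for every finite $T_0$ space $P$, hence a natural isomorphism $H_k(\mathcal{K}(P))\cong H_k(P)$ compatible with order-preserving maps. Applied levelwise, this produces an isomorphism of inverse sequences of groups between $(H_k(\mathcal{K}(\mathcal{U}_{4\epsilon_n}(A_n))),\cdot)$ and $(H_k(\mathcal{U}_{4\epsilon_n}(A_n)),\cdot)$, so their inverse limits coincide. Combining with the isomorphism from step one (applying the functor $H_k\colon HTop\to \mathrm{Grp}$, which sends a pro-$HTop$ isomorphism to a pro-$\mathrm{Grp}$ isomorphism and hence preserves inverse limits), we conclude that $\varprojlim H_k(\mathcal{U}_{\tau_n}(B_n))\cong \check{H}_k(X)$.

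The only real subtlety (and the step that most deserves care when writing the proof in full) is the first one: checking that the relaxed hypothesis $\tau_{n+1}<\tau_n/2$, without any control on $\gamma_n=\sup\{d(x,B_n)\mid x\in X\}$, still makes the diagrams in the proof of Theorem \ref{thm:unicidadFASO} commute up to homotopy. All the estimates there used $\epsilon_{n+1}<(\epsilon_n-\gamma_n)/2$, but inspection shows they go through with the weaker bound once one uses the crude inequality $\epsilon_{m}<\epsilon_n/2^{m-n}$; this is precisely the verification alluded to (but not carried out) in Remark \ref{rem:Apaño2}. Everything else is a formal consequence of functoriality and the classical identification of \v{C}ech homology with the limit of homology along any $HPol$-expansion.
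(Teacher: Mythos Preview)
Your proposal is correct and follows exactly the route the paper intends: the paper gives no proof beyond the sentence ``As an immediate consequence of this result and the previous remark we get the following result,'' and your argument is precisely the unpacking of that sentence---the pro-$HTop$ isomorphism of Remark~\ref{rem:Apaño2} (via the estimates of Theorem~\ref{thm:unicidadFASO} under the relaxed bound of Remark~\ref{rem:Apaño}), followed by the $HPol$-expansion paragraph and McCord's natural weak equivalence. One cosmetic remark: the proof of Theorem~\ref{thm:unicidadFASO} builds the inverse morphism $(T_n,T)$ directly rather than through Morita's lemma, so you need not invoke it there.
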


\section{Computational aspects of a FASO for a compact metric space and implementation}\label{section:algoritmosComparativa}
We recall the notion of Vietoris-Rips complex, \cite{vietoris1927uber}. Given a finite set of point $S\subseteq \mathbb{R}^n$ for some $n\in \mathbb{N}$ and a positive real value $\epsilon$. The Vietoris-Rips complex is a simplicial complex given as follows $V_\epsilon(S)=\{ \sigma\subseteq S|d(u,v)\leq \epsilon, \ \forall u\neq v\in \sigma \}$. In Figure \ref{fig:VietorisRipsComplex} we have an example of a Vietoris-Rips complex. For a complete introduction, see for example \cite{edelsbrunner2010computational}.

\begin{figure}[h]
\centering
\includegraphics[scale=1.3]{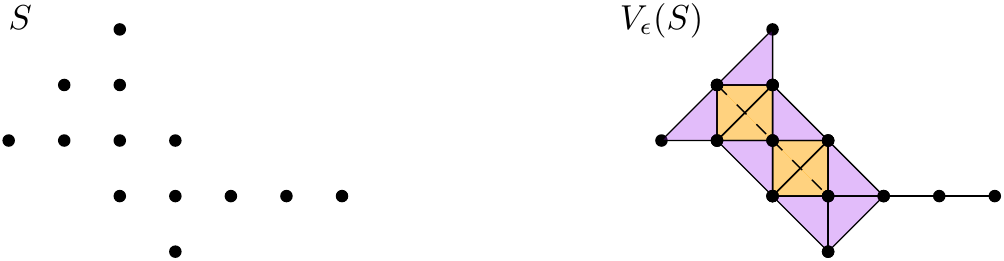}
\caption{A set of points $S\subset \mathbb{R}^2$ and Vietoris-Rips complex of $S$ for some positive value $\epsilon$.}\label{fig:VietorisRipsComplex}
\end{figure}

Let us assume that $X$ is a compact metric space embedded in $\mathbb{R}^n$ for some $n\in \mathbb{N}$. Given an $\epsilon_n$-approximation $A_n$, the problem of finding $\mathcal{U}_{4\epsilon_n}(A_n)$ is equivalent to construct the Vietoris-Rips complex $V_{4\epsilon_n}(A_n)$. This is due to the fact that $\mathcal{U}_{4\epsilon_n}(A_n)$ is the face poset of $V_{4\epsilon_n}(A_n)$, that is, $\mathcal{X}(V_{4\epsilon_n}(A_n))=\mathcal{U}_{4\epsilon_n}(A_n)$. Furthermore, we get that $V_{4\epsilon_n}(A_n)$ has the same weak homotopy type of $\mathcal{U}_{4\epsilon_n}(A_n)$ by \cite{mccord1966singular}. In \cite{zomorodian2010fast}, it is obtained an algorithm to get Vietoris-Rips complexes. Furthermore, it is also compared with other algorithms in terms of computational time. The experiments show that the algorithm introduced in \cite{zomorodian2010fast} is the fastest. It is also important to observe that this algorithm has two phases. In the first one, it is constructed the 1-skeleton of the Vietoris-Rips complex. Once it is obtained the 1-skeleton, the problem to obtain the entire simplicial complex is a combinatorial one and it is related to the computation of clique complexes. A clique is a set of vertices in a graph that induces a complete subgraph. A clique complex has the maximal cliques of a graph as its maximal simplices.

If we have computed $\mathcal{U}_{4\epsilon_n}(A_n)$ and $\mathcal{U}_{4\epsilon_{n+1}}(A_{n+1})$, then we need to get the map $q_{n,n+1}$. This problem is equivalent to a variation of a classical one in computational geometry, the $\epsilon$-nearest neighborhood problem. Namely, given a set of points $S\subset \mathbb{R}^n$ for some $n\in \mathbb{N}$, a point $q\in \mathbb{R}^n$ and a positive value $\epsilon$, find the set $N=\{x\in S|d(x,y)<\epsilon \}$.  This problem has been treated largely in the literature and has several approaches, see for instance \cite{Friedman1977algorithm} or \cite{arya1998optimal}. Using one of the previous algorithms it can be obtained $q_{n,n+1}$ over $A_{n+1}$. For the rest of the points in $\mathcal{U}_{4\epsilon_{n+1}}(A_{n+1})$, the description of $q_{n,n+1}$ is purely combinatorial since it is the union of the images of the points obtained before.

\begin{figure}[h]
\centering
\includegraphics[scale=0.9]{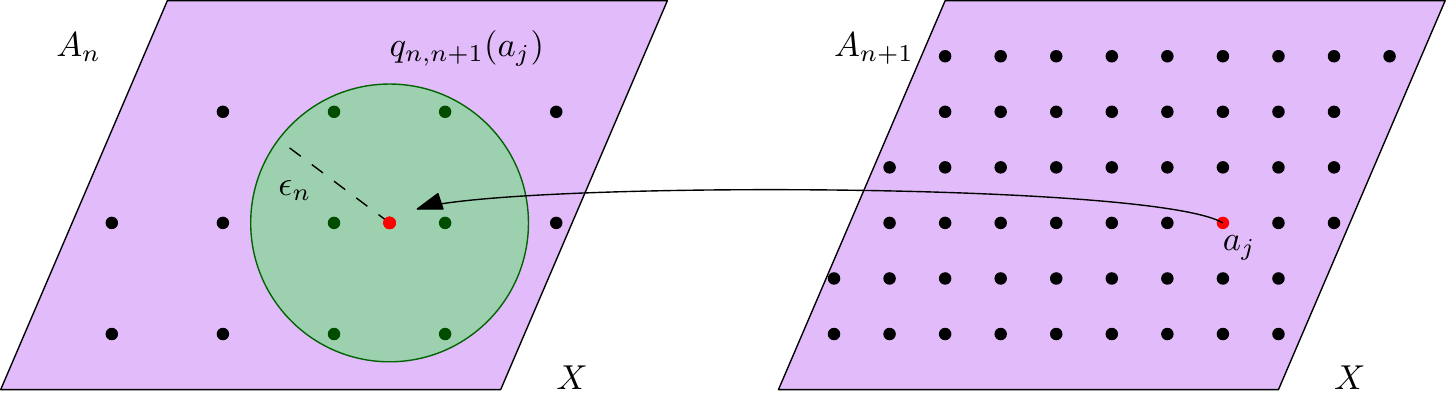}
\caption{Schematic description of $q_{n,n+1}(a_j)$, where $a_j\in A_{n+1}$.}

\end{figure}

Thus, if $X\subset \mathbb{R}^m$ for some $m\in \mathbb{N}$, then we have the following steps:
\begin{enumerate}
\item Find a sequence of positive values $\{\epsilon_n\}_{n\in \mathbb{N}}$ and an $\epsilon_n$-approximation $A_n$ for every $n\in \mathbb{N}$ satisfying the conditions required in the construction of a FASO for $X$.
\item Compute $\mathcal{U}_{4\epsilon_n}(A_n)$ for every $n\in \mathbb{N}$.
\item Obtain $q_{n,n+1}:\mathcal{U}_{4\epsilon_{n+1}}(A_{n+1})\rightarrow \mathcal{U}_{4\epsilon_n}(A_n)$ for every $n\in \mathbb{N}$.
\end{enumerate}
One possible approach to get the first step is to consider a sequence of grids. If we are only interested in algebraic invariants for $X$ such as the homology groups, then we can apply Remark \ref{rem:Apaño}, Remark \ref{rem:Apaño2} and Proposition \ref{prop:ReconstructionHomology} in order to simplify the computations. We have seen that the second step is equivalent to the construction of a Vietoris-Rips complex and the third step is equivalent to a classical problem in computational geometry. Then, combining classical algorithms that solve these problems we can obtain a FASO for a compact metric space embedded in $\mathbb{R}^m$. 

The method propose throughout this manuscript can also be used for data analysis. Instead of having a compact metric space embedded in $\mathbb{R}^n$ we could have just a set of points $S$ in $\mathbb{R}^n$. Using the metric inherit from $\mathbb{R}^n$, we can speak about $\epsilon$-approximations for the data set $S$. In this case, higher values of the inverse sequence tend to recover the data set as a disjoint union of points, that is, there exists $m$ such that for every $n\geq m$ we get $\mathcal{U}_{4\epsilon_n}(A_n)=\{ S\}$.

We present an easy example, where we assume that we cannot get in a deterministic way the points of the approximation, that is, we can obtain data from experiments or we only know that the data follow a distribution.
\begin{ex}\label{ex:ejemploCuadrados} We consider the space given by two squares in $\mathbb{R}^2$ as follows 
$$D=\{ 0\}\times [0,2]\cup \{ 1\}\times [0,2]\cup [0,1]\times \{0\}\cup [0,1]\times \{ 1\}\cup [0,1]\times \{ 2\}.$$ We can consider $\epsilon_n=\frac{1}{2^{2(n-1)}}$ by Remark \ref{rem:Apaño} and Remark \ref{rem:Apaño2}. Suppose that the data that can be taken to approximate $D$ follow a continuous uniform distribution. In Figure \ref{fig:observations} we have the data obtained for the first observations, concretely, $A_1$, $A_2$, $A_3$ and $A_4$.
\begin{figure}[h!]
\centering
\begin{tabular}{rc}
 \includegraphics[scale=0.58]{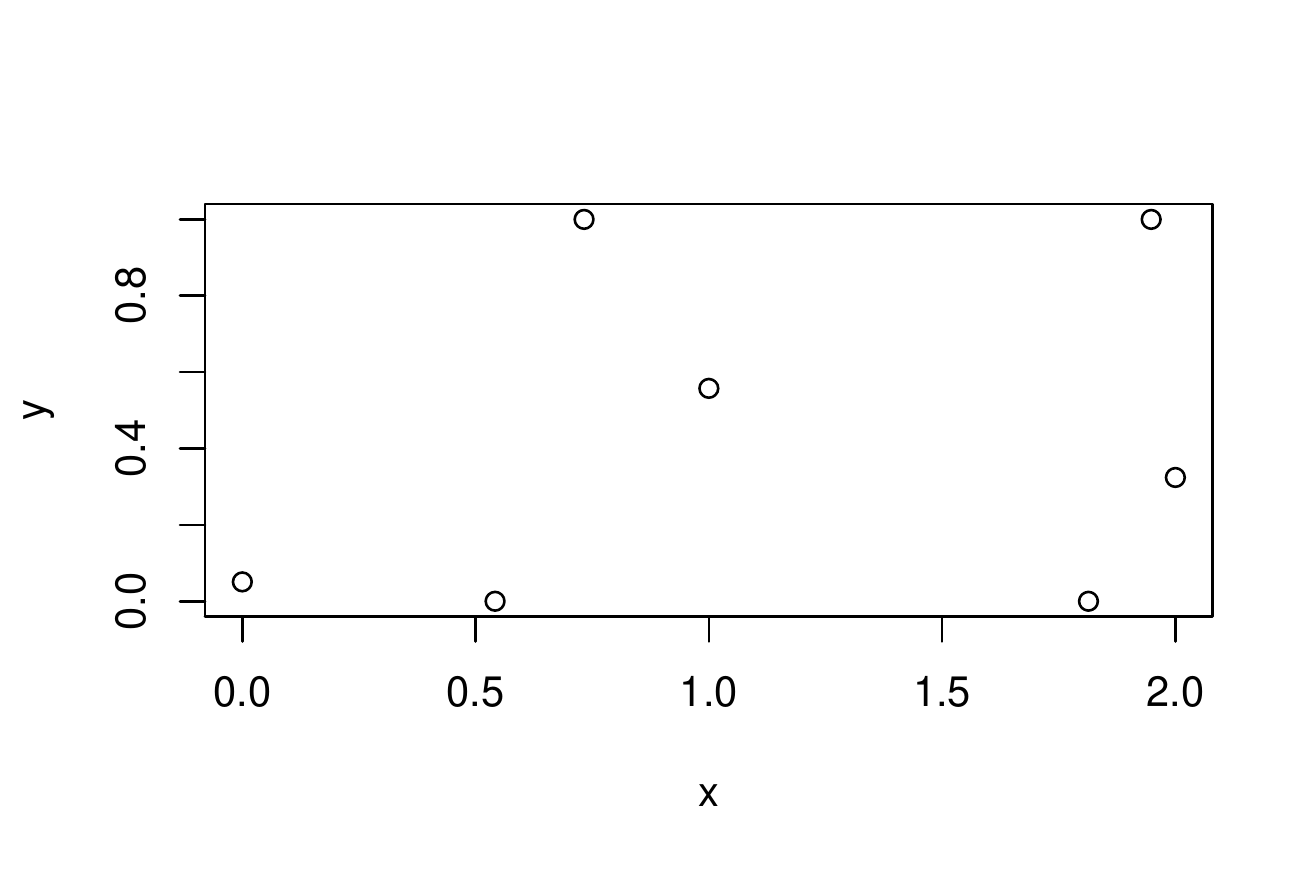}&
\includegraphics[scale=0.58]{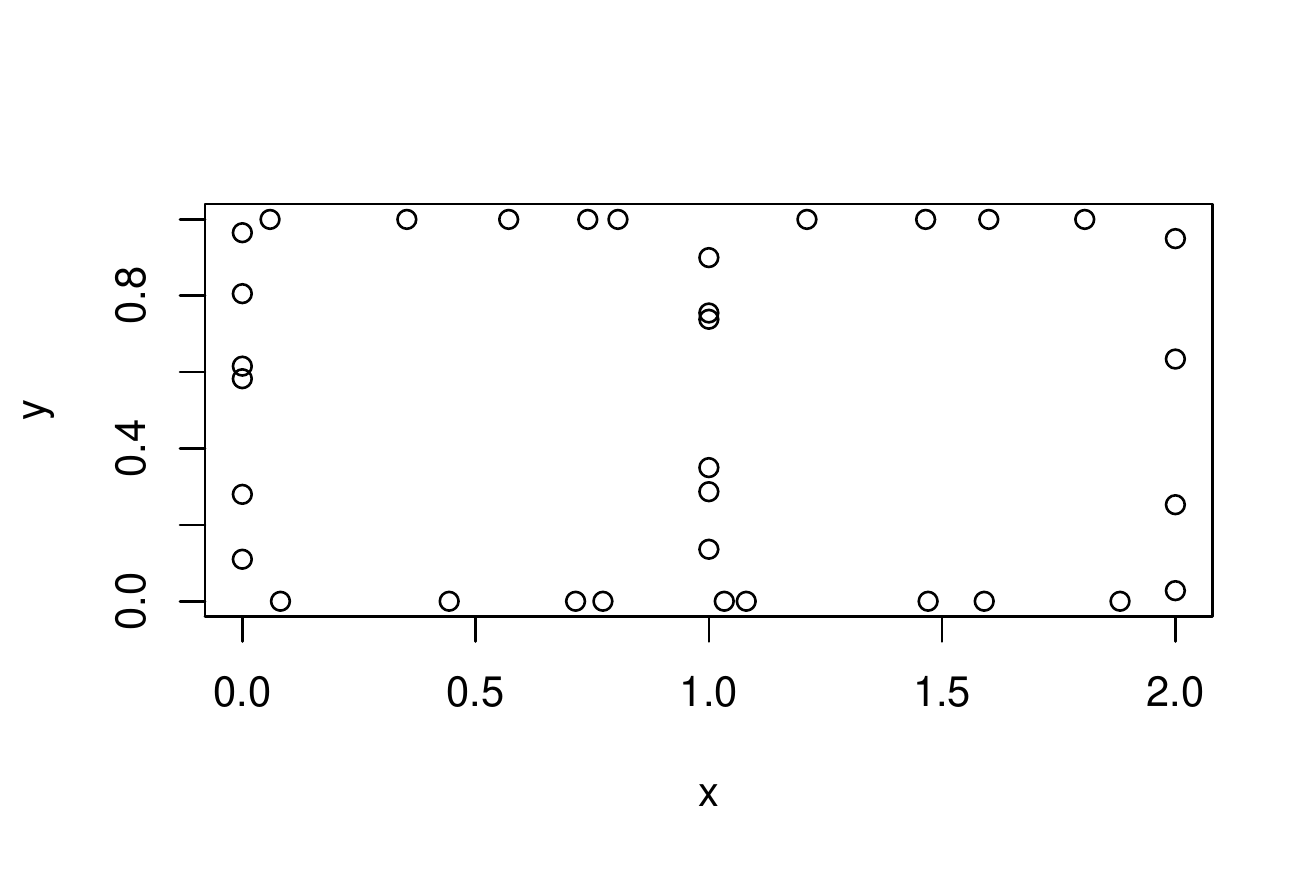}\\
 \includegraphics[scale=0.58]{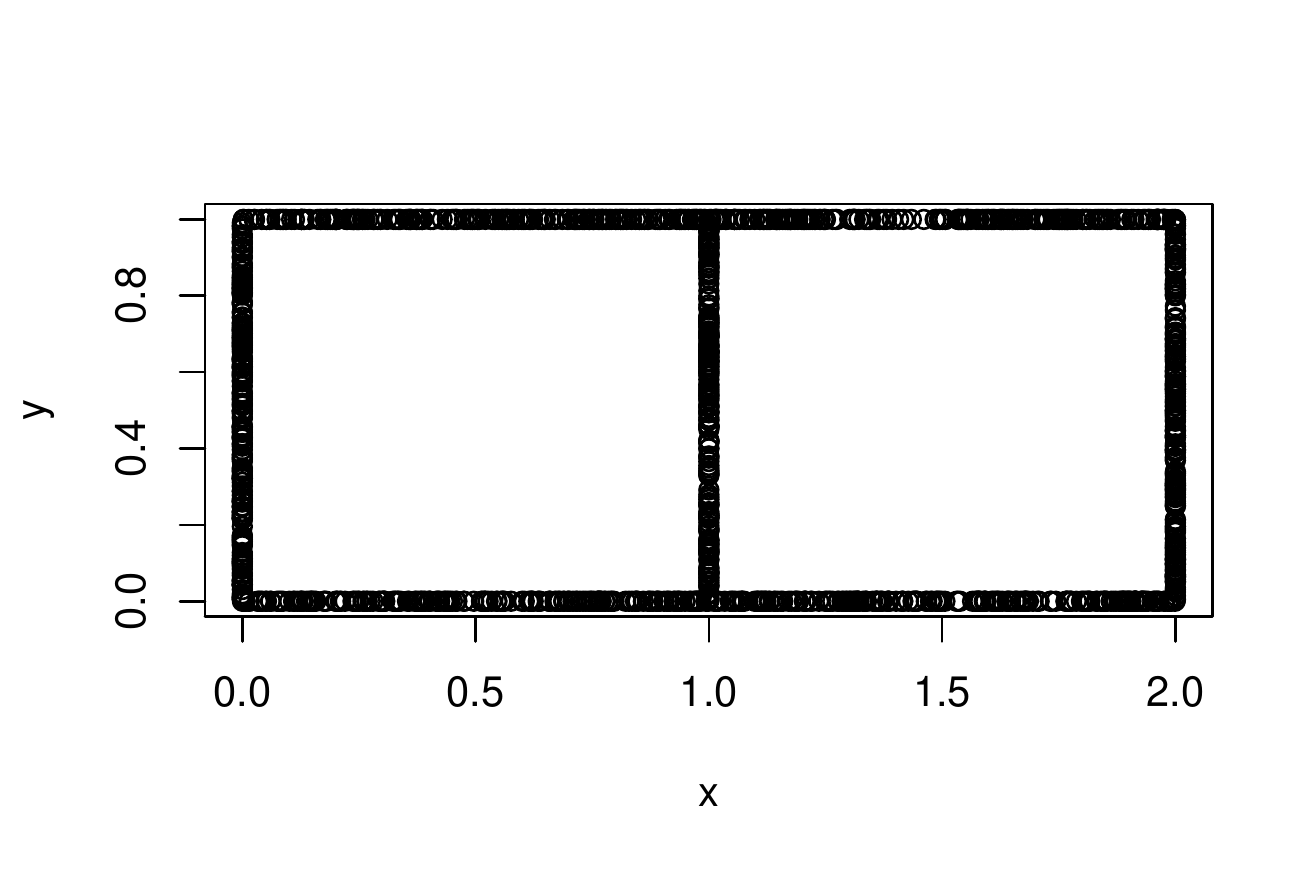}&
 \includegraphics[scale=0.58]{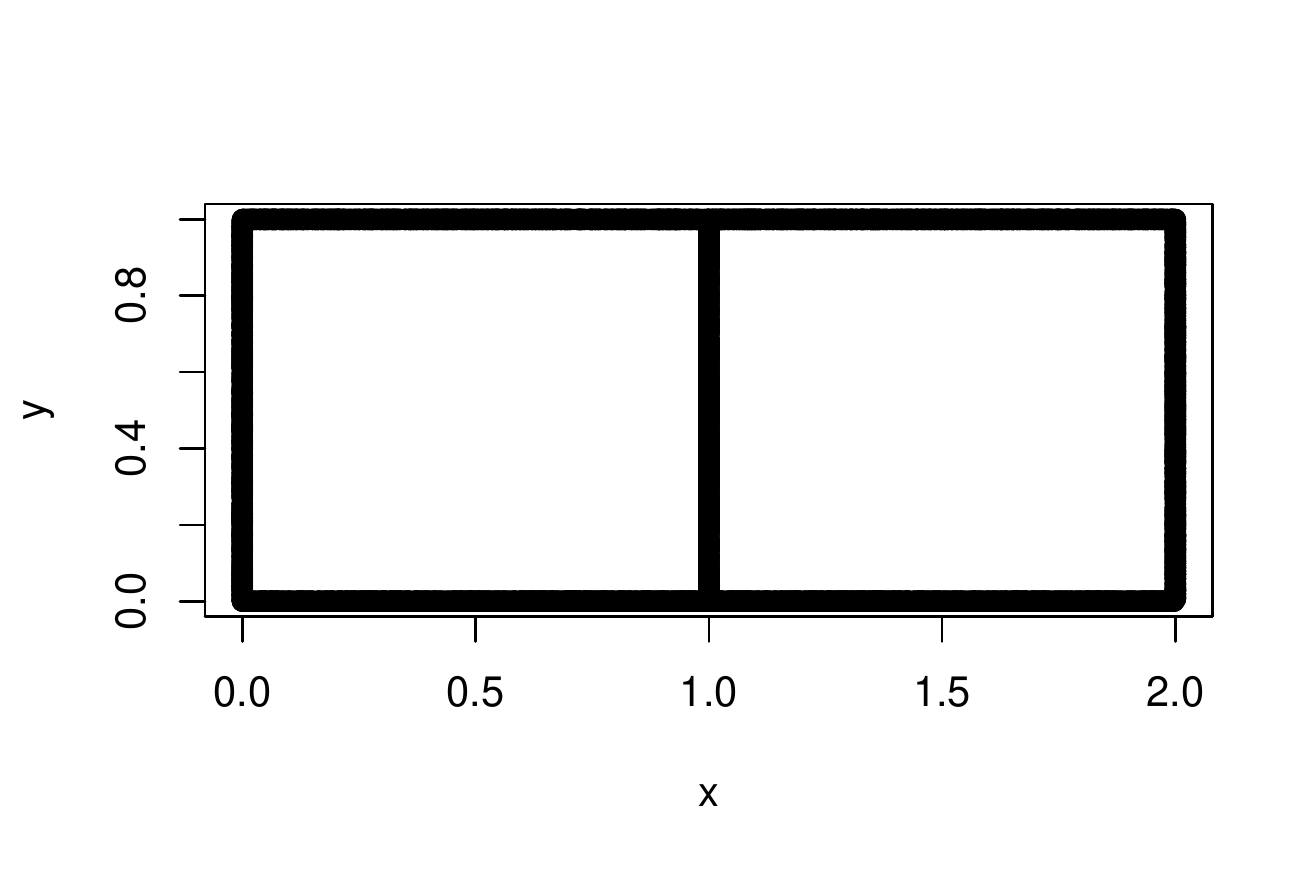}
\end{tabular}
\caption{$A_1$, $A_2$, $A_3$ and $A_4$.}\label{fig:observations}
\end{figure}
We compute the dimension of the homology groups of $\mathcal{U}_{4\epsilon_i}(A_i)$ for $i=1,..,5$, see Table \ref{table:dimensionesHomologia}.

\begin{table}[]\caption{Dimension of homology groups.}\label{table:dimensionesHomologia}
\centering
\begin{tabular}{c|ccccc}
      & $\mathcal{U}_{4\epsilon_1}(A_1)$ & $\mathcal{U}_{4\epsilon_2}(A_2)$ & $\mathcal{U}_{4\epsilon_3}(A_3)$ & $\mathcal{U}_{4\epsilon_4}(A_4)$ & $\mathcal{U}_{4\epsilon_5}(A_5)$ \\ \hline
$H_0$ & $1$                              & $1$                              & $1$                              & $1$                              & $1$                              \\
$H_1$ & $0$                              & $2$                              & $2$                              & $2$                              & $2$                              \\
$H_2$ & $0$                              & $0$                              & $0$                              & $0$                              & $0$                             
\end{tabular}
\end{table}

It seems that with a few steps we get a good representation of $D$ at least from a homological viewpoint. On the other hand, we know that different choices of the sequence $\{\epsilon_n\}_{n\in \mathbb{N}}$ or observations lead to the same results due to the results obtained in Section \ref{sec:uniqueness}, that are somehow telling that this method of approximation is robust.
\end{ex}

In Example \ref{ex:ejemploCuadrados}, we can observe that the dimension of homology groups stabilize very soon. This can happen due to the good properties of $D$, particularly, we have that $D$ is a connected compact $CW$-complex, which means that $D$ has good local properties. We provide an example for which this behavior does not happen. 
\begin{ex}\label{ex:ejemploCantor} We consider the Cantor set $C\subset[0,1]$. The exact construction is as follows. From the closed interval $E_1=[0,1]$, first remove the open interval $(\frac{1}{3},\frac{2}{3})$ leaving $E_2=[0,\frac{1}{3}]\cup [\frac{2}{3},1]$.  From $E_2$, delete the open intervals $(\frac{1}{9},\frac{2}{9})$ and $(\frac{7}{9},\frac{8}{9})$. $E_3$ is the remaining $4$ closed intervals. From $E_3$, remove middle thirds as before obtaining $E_4$. Hence, $C=\cap_{i=1}^\infty E_i$.  For a complete introduction and properties about the Cantor set $C$, see \cite{steen1978counterexamples}.

Again, it can be taken $\epsilon_n=\frac{1}{2^{2(n-1)}}$ by Remark \ref{rem:Apaño} and Remark \ref{rem:Apaño2}. We will use as approximations the endpoints of the closed intervals that remain after removing open intervals in every step, for example, $A_1=\{0,\frac{1}{3},\frac{2}{3},1 \}$. $A_n$ is given by the endpoints of $E_{n+1}$. It is not difficult to show that the endpoints of the remaining closed intervals belong to $C$ and form a $\epsilon_n$-approximation for every $n\in \mathbb{N}$. We present in Figure \ref{fig:observationsCantor} the first approximations.

\begin{figure}[h!]
\centering
\begin{tabular}{rc}
 \includegraphics[scale=0.58]{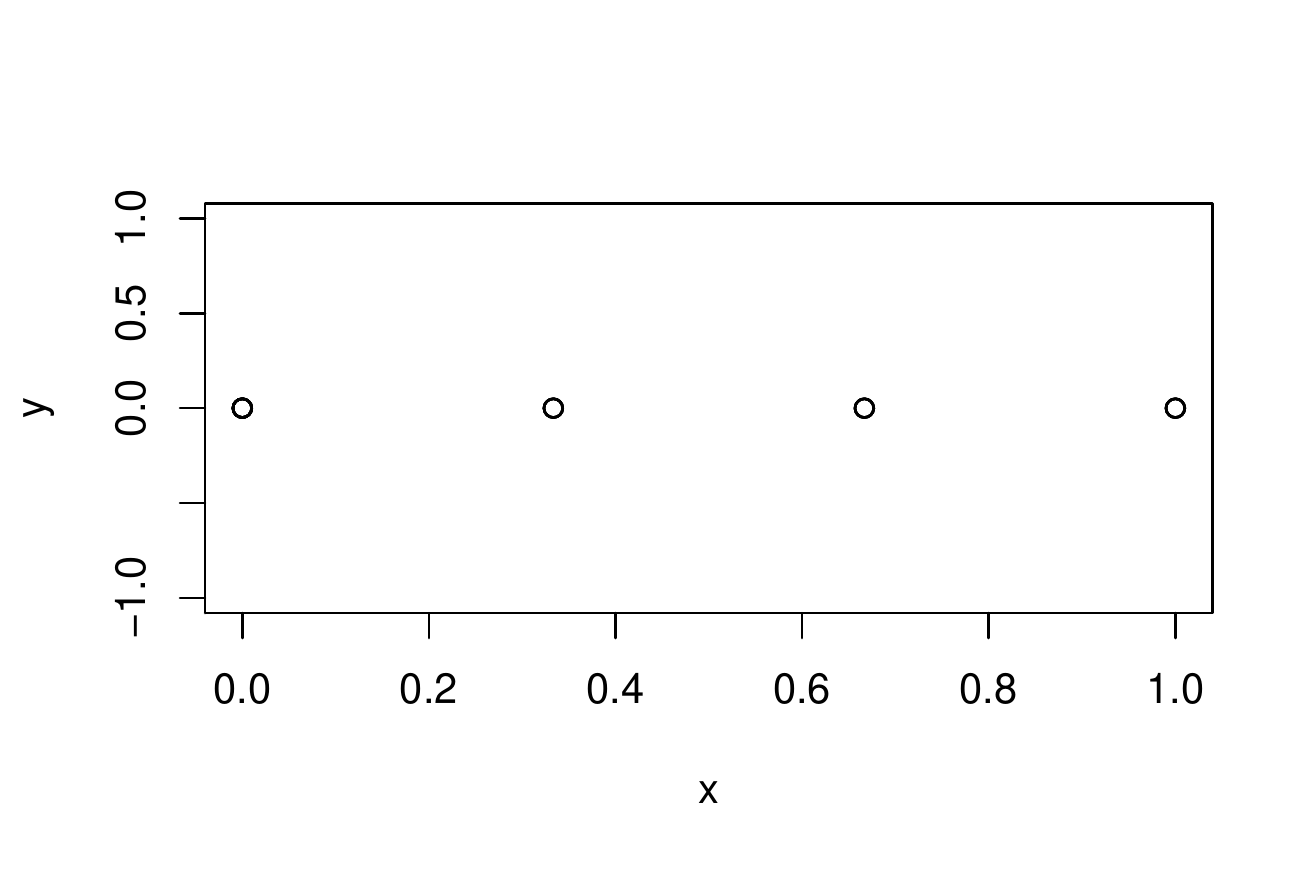}&
\includegraphics[scale=0.58]{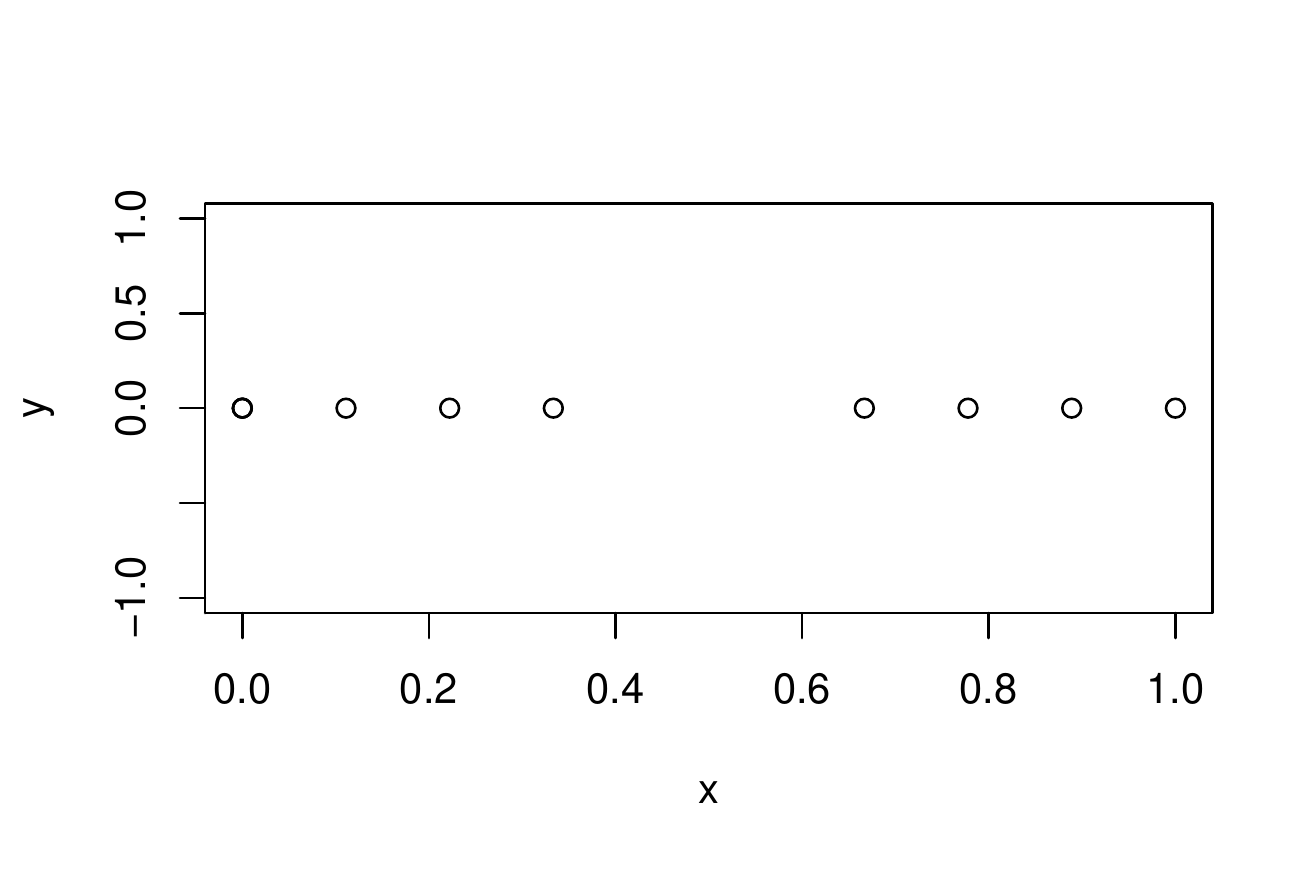}\\
\end{tabular}
\caption{$A_1$ and $A_2$}\label{fig:observationsCantor}
\end{figure}

In Table \ref{table:dimensionesHomologiaCantor} we present the dimension of the homology groups for the first values of the sequence $\{\mathcal{U}_{4\epsilon_n}(A_n) \}_{n\in \mathbb{N}}$. It can be observed that if $n$ increases, then the dimension of the $0$-dimensional homology group for $\mathcal{U}_{4\epsilon_n}(A_n)$ increases. The previous behavior is expected since the Cantor set is totally disconnected. Therefore, in each step we are locating more components. 

\begin{table}[h!]\caption{Dimension of homology groups.}\label{table:dimensionesHomologiaCantor}
\begin{tabular}{c|cclllllll}
{\ul } & $\mathcal{U}_{4\epsilon_{1}}(A_1)$ & $\mathcal{U}_{4\epsilon_{2}}(A_2)$ & $\mathcal{U}_{4\epsilon_{3}}(A_3)$ & $\mathcal{U}_{4\epsilon_{4}}(A_4)$ &   $\mathcal{U}_{4\epsilon_{5}}(A_5)$   &    $\mathcal{U}_{4\epsilon_{6}}(A_6)$   &    $\mathcal{U}_{4\epsilon_{7}}(A_7)$  &   $\mathcal{U}_{4\epsilon_{8}}(A_8)$ \\ \cline{1-9}
$H_0$  & $1$                                & $1$                                & $1$                                & $3$                                & $7$ & $31$ & $63$  & $127 $ \\
$H_1$  & $0$                                & $0$                                & $0$                                & $0$                                & $0$ & $0$  & $0$ & $0$ \\
$H_2$  & $0$                                & $0$                                & $0$                                & $0$                                & $0$ & $0$  & $0$ & $0$
\end{tabular}
\end{table}


\end{ex}

The previous computations, Example \ref{ex:ejemploCuadrados} and Example \ref{ex:ejemploCantor}, have been carried out by the statistical software R. 

\section*{Appendix: Pro-Categories}\label{sec:Appendix}
Given a partially ordered set $(\Lambda,\leq)$, a subset $\Lambda'\subset \Lambda$ is cofinal in $\Lambda$ if for each $\lambda\in \Lambda$ there exists $\lambda'\in \Lambda'$ such that $\lambda<\lambda'$.  A partially ordered set $(\Lambda,\leq)$ is a directed set if for any $\lambda_1,\lambda_2\in \Lambda$, there exists $\lambda\in \Lambda$ such that $\lambda_1\leq \lambda$ and $\lambda_2\leq \lambda$.
\begin{df} Let $\mathcal{C}$ be an arbitrary category. An inverse system in the category $\mathcal{C}$ consists of a directed set $\Lambda$, that is called the index set, of an object $X_\lambda$ from $\mathcal{C}$ for each $\lambda\in \Lambda$ and of a morphism $p_{\lambda, \lambda'}:X_{\lambda'}\rightarrow X_{\lambda}$ from $\mathcal{C}$ for each pair $\lambda\leq \lambda'$. Moreover, one requires that $p_{\lambda,\lambda}$ is the identity map and that $\lambda\leq \lambda'$ and $\lambda'\leq \lambda''$ implies $p_{\lambda,\lambda'}\circ p_{\lambda',\lambda''}=p_{\lambda,\lambda''}$. An inverse system is denoted by $\mathbb{X}=(X_\lambda,p_{\lambda,\lambda'},\Lambda)$, where the $X_\lambda$'s are called the terms and $p_{\lambda,\lambda'}$ are called the bonding maps of $\mathbb{X}$. 
\end{df}
\begin{rem} An inverse system indexed by the natural numbers with its usual order is called an inverse sequence and it is denoted by $\mathbb{X}=(X_n,p_{n,n+1})$. In an inverse sequence $(X_n,p_{n,n+1})$, it suffices to know the morphisms $p_{n,n+1}:X_{n+1}\rightarrow X_{n}$ for every $n\in \mathbb{N}$ because the remaining bonding maps are obtained by composition.
\[\begin{tikzcd}
X_1 & X_2 \arrow{l}{p_{1,2}} & X_3 \arrow{l}{p_{2,3}} & X_4 \arrow{l}{p_{3,4}} & \cdots \arrow{l}{p_4,5} & X_n \arrow{l}{p_{n-1,n}} & \cdots \arrow{l}{p_{n,n+1}} &
\end{tikzcd}
\]
\end{rem}
\begin{rem} Every object $X$ of $\mathcal{C}$ can be seen as an inverse system $(X)$, where every term is $X$ and every  bonding map is the identity map. This inverse system is called rudimentary system.
\end{rem}

A morphism of inverse systems $\mathbb{X}=(X_\lambda,p_{\lambda,\lambda'},\Lambda)\rightarrow \mathbb{Y}=(Y_\mu, q_{\mu,\mu'}, M)$ consists of a function $\phi: M\rightarrow \Lambda$
 and of morphisms $f_\mu:X_{\phi(\mu)}\rightarrow Y_\mu$ in $\mathcal{C}$ for each $\mu \in M $ such that whenever $\mu\leq \mu'$, then there exists $\lambda\in \Lambda$ satisfying that $\lambda\geq \phi(\mu), \phi(\mu')$, for which $f_\mu \circ p_{\phi(\mu),\lambda}=q_{\mu, \mu'}\circ f_{\mu'}\circ p_{\phi(\mu'),\lambda}$, that is, the following diagram commutes.
 
 \[\begin{tikzcd}[row sep=large,column sep=huge]
X_{\phi(\mu)} \arrow{d}{f_{\mu}} &  X_\lambda \arrow[l,"p_{\phi(\mu),\lambda}"']  \arrow{r}{p_{\phi(\mu'),\lambda}} & X_{\phi(\mu')} \arrow{d}{f_{\mu'}} \\
Y_{\mu} & & Y_{\mu'} \arrow{ll}{q_{\mu,\mu'}}
\end{tikzcd}
\]
A morphism of inverse systems is denoted by $(f_\mu,\phi):\mathbb{X}\rightarrow \mathbb{Y}$. Let $\mathbb{Z}=(Z_\nu, r_{\nu,\nu'},N)$ be an inverse system and let $(g_\nu, \psi):\mathbb{Y}\rightarrow \mathbb{Z}$ be a morphism of systems. Then the composition $(g_\nu,\psi)\circ(f_{\mu},\phi)=(h_\nu, \chi):\mathbb{X}\rightarrow \mathbb{Z}$ is given as follows: $\chi=\phi \circ \psi :N\rightarrow \Lambda$ and $h_\nu=g_\nu\circ f_{\psi(\nu)}:X_{\chi(\nu)}\rightarrow Z_{\nu}$. It is routine to check that the composition is well-defined and associative. The identity morphism $(id_{\lambda},id_{\Lambda}):\mathbb{X}\rightarrow \mathbb{X}$ is given by the identity map $id_{\Lambda}:\Lambda\rightarrow \Lambda$ and the identity morphisms $id_\lambda:X_{\lambda}\rightarrow X_{\lambda}$. It is easy to check that $(f_\mu,\phi)\circ (id_{\lambda},id_{\Lambda})=(f_\mu,\phi)$ and $(id_{\mu},id_{M})\circ (f_\mu,\phi)=(f_\mu,\phi)$. Thus, we have a category inv-$\mathcal{C}$, whose objects are all inverse systems in $\mathcal{C}$ and whose morphisms are the morphisms of inverse systems described above.

Let $\mathbb{X}=(X_\lambda,p_{\lambda,\lambda'},\Lambda)$ and $\mathbb{Y}=(Y_\lambda,q_{\lambda,\lambda'}, \Lambda)$ be two inverse systems over the same directed set $\Lambda$. A morphism of systems $(f_\lambda,\varphi)$ is a level morphisms of systems provided $\varphi$ is the identity map and for $\lambda\leq \lambda'$ the following diagram commutes.

 \[\begin{tikzcd}[row sep=large,column sep=huge]
X_\lambda \arrow{d}{f_{\lambda}} & X_{\lambda'}  \arrow[l,"p_{\lambda,\lambda'}"'] \arrow{d}{f_{\lambda'}}\\
Y_\lambda & Y_{\lambda'} \arrow[l,"q_{\lambda,\lambda'}"]
\end{tikzcd}
\]

Given two morphisms $(f_\mu,\phi),(f_\mu',\phi'):\mathbb{X}\rightarrow  \mathbb{Y}$, we write that $(f_\mu,\phi)\sim (f_\mu',\phi')$ if and only if each $\mu\in M$ admits $\lambda\in \Lambda$ satisfying that $\lambda\geq \phi(\mu),\phi'(\mu)$ and that the following diagram commutes.

\[\begin{tikzcd}[row sep=large,column sep=huge]
X_{\phi(\mu)} \arrow[dr,"{f_\mu}"'] & X_\lambda \arrow[l,"p_{\phi(\mu),\lambda}"'] \arrow[r,"p_{\phi'(\mu),\lambda}"] & X_{\phi'(\mu)} \arrow{dl}{f\mu'} \\
& Y_\mu &
\end{tikzcd}
\]

Again, it is routine to check that $\sim$ is an equivalence relation. We have the category pro-$\mathcal{C}$ for the category $\mathcal{C}$. The objects of pro-$\mathcal{C}$ are all inverse systems in $\mathcal{C}$. A morphism $f:\mathbb{X}\rightarrow \mathbb{Y}$ is an equivalence class of morphisms of systems with respect to the equivalence relation $\sim$.

Let $\Lambda$ be a directed set. If $\Lambda'\subset \Lambda$ is a directed set and $\mathbb{X}=(X_\lambda,p_{\lambda,\lambda'},\Lambda)$ is an inverse system, then $\mathbb{X}'=(X_{\lambda},p_{\lambda,\lambda'},\Lambda')$ is a subsystem of $\mathbb{X}$. There is a natural morphism $(i_{\lambda},i)$ given by $i(\lambda)=\lambda$ and $i_\lambda=id_{\lambda}:X_\lambda\rightarrow X_\lambda$ for every $\lambda\in \Lambda'$. The morphism $i:\mathbb{X}'\rightarrow \mathbb{X}$ represented by $(i_{\lambda},i)$ is called the restriction morphism.

\begin{thm} If $\Lambda'$ is cofinal in $\Lambda$, then the restriction morphism $i:\mathbb{X}'\rightarrow \mathbb{X}$ is an isomorphism in pro-$\mathcal{C}$.
\end{thm}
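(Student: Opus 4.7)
The plan is to construct an explicit two-sided inverse to $i$ in pro-$\mathcal{C}$ using the cofinality hypothesis and the axiom of choice. By cofinality, for each $\lambda \in \Lambda$ pick an element $\psi(\lambda) \in \Lambda'$ with $\psi(\lambda) \geq \lambda$, normalized so that $\psi(\lambda') = \lambda'$ whenever $\lambda' \in \Lambda'$. Together with the bonding morphisms $j_\lambda = p_{\lambda,\psi(\lambda)} : X_{\psi(\lambda)} \to X_\lambda$, the pair $(j_\lambda, \psi)$ is the candidate inverse morphism; its index function runs in the direction opposite to that of the index function of $i$, as required by the definition of pro-$\mathcal{C}$.

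The first step is to check that $(j_\lambda, \psi)$ is a well-defined morphism of inverse systems, i.e.\ that for every $\lambda \leq \lambda^*$ in $\Lambda$ there is a common upper bound at which the required square commutes. Using directedness of $\Lambda$ and cofinality of $\Lambda'$, choose $\mu \in \Lambda'$ with $\mu \geq \psi(\lambda), \psi(\lambda^*)$; the identity
\[
j_\lambda \circ p_{\psi(\lambda),\mu} \;=\; p_{\lambda,\lambda^*} \circ j_{\lambda^*} \circ p_{\psi(\lambda^*),\mu}
\]
then follows because both sides collapse to $p_{\lambda,\mu}$ via the transitivity relation $p_{\alpha,\gamma} = p_{\alpha,\beta} \circ p_{\beta,\gamma}$ that defines an inverse system.

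The second step is to check that the two composites with $i$ are equivalent to the identity morphisms under $\sim$. For the composite ending at index $\lambda \in \Lambda$, take $\psi(\lambda)$ itself as the witnessing upper bound: it dominates $\lambda$ by construction and is fixed by $\psi$, so both legs of the equivalence triangle reduce to $p_{\lambda,\psi(\lambda)}$. The other composite is simpler still, since $\psi$ is the identity on $\Lambda'$, so on indices in $\Lambda'$ everything already commutes on the nose. The main obstacle here is purely notational bookkeeping: in pro-$\mathcal{C}$ the index functions travel contravariantly to the arrows they represent, so one must be careful about which indices live in $\Lambda$ versus $\Lambda'$ and in what order bonding maps compose. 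Once the direction is pinned down, every verification reduces to a single application of the bonding-map composition law.
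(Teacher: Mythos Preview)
The paper does not supply a proof of this theorem; it is stated in the appendix as a standard background fact from pro-category theory (the full argument appears, for instance, in Marde\v{s}i\'{c}--Segal, \emph{Shape Theory}, to which the paper refers). So there is no ``paper's own proof'' to compare against.

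Your argument is exactly the standard textbook proof: use cofinality and choice to select $\psi(\lambda)\in\Lambda'$ dominating $\lambda$, take the bonding maps $p_{\lambda,\psi(\lambda)}$ as the components of the candidate inverse, and then verify both the morphism coherence condition and the two composites by collapsing everything to a single bonding map via the transitivity law $p_{\alpha,\gamma}=p_{\alpha,\beta}\circ p_{\beta,\gamma}$. The normalization $\psi|_{\Lambda'}=\mathrm{id}$ is a convenient touch that makes one of the two composites literally equal to the identity rather than merely $\sim$-equivalent to it. Nothing is missing; this is precisely how the result is established in the literature.
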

Finally, we recall Morita's lemma, see \cite{morita1974hurewicz} or \cite[Chapter 2, Theorem 5]{mardevsic1982shape}. 
\begin{thm}\label{thm:Moritalema} Let $\mathcal{C}$ be a category and let $\mathbb{X}=(X_\lambda,p_{\lambda,\lambda'},\Lambda)$ and $\mathbb{Y}=(Y_\lambda,q_{\lambda,\lambda'},\Lambda)$ be inverse systems over the same index set $\Lambda$. Let $f:\mathbb{X}\rightarrow \mathbb{Y} $ be a morphism of pro-$\mathcal{C}$ given by a level morphism of systems $(f_\lambda):\mathbb{X}\rightarrow \mathbb{Y}$. Then the morphism $f$ is an isomorphism of pro-$\mathcal{C}$ if and only if every $\lambda\in \Lambda $ admits $\lambda'\geq \lambda$ and a morphism $g_\lambda :Y_\lambda\rightarrow X_\lambda$ of $\mathcal{C}$ such that the following diagram commutes.
\[\begin{tikzcd}[row sep=large,column sep=huge]
X_\lambda \arrow{d}{f_\lambda} & X_{\lambda'} \arrow[l,"p_{\lambda,\lambda'}"'] \arrow{d}{f_{\lambda'}} \\
Y_{\lambda} & Y_{\lambda'} \arrow[l,"q_{\lambda,\lambda'}"] \arrow{ul}{g_\lambda}
\end{tikzcd}
\]
\end{thm}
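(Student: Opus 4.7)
The plan is to verify the biconditional directly from the definition of pro-$\mathcal{C}$ morphism and the equivalence relation $\sim$. For the forward direction, assume $f = (f_\lambda, id)$ is invertible with inverse represented by $(h_\mu, \psi): \mathbb{Y} \to \mathbb{X}$. Unpacking $h \circ f \sim id_{\mathbb{X}}$ yields, for each $\mu$, some $\lambda_1 \geq \psi(\mu), \mu$ with $h_\mu \circ f_{\psi(\mu)} \circ p_{\psi(\mu), \lambda_1} = p_{\mu, \lambda_1}$, and $f \circ h \sim id_{\mathbb{Y}}$ analogously gives $\lambda_2$ with $f_\mu \circ h_\mu \circ q_{\psi(\mu), \lambda_2} = q_{\mu, \lambda_2}$. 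Given $\lambda \in \Lambda$, I would pick $\lambda' \geq \lambda_1, \lambda_2$ at $\mu = \lambda$ and set $g_\lambda := h_\lambda \circ q_{\psi(\lambda), \lambda'}: Y_{\lambda'} \to X_\lambda$. The upper triangle $g_\lambda \circ f_{\lambda'} = p_{\lambda, \lambda'}$ follows by applying the levelness relation $q_{\psi(\lambda), \lambda'} \circ f_{\lambda'} = f_{\psi(\lambda)} \circ p_{\psi(\lambda), \lambda'}$ and then the first identity post-composed with $p_{\lambda_1, \lambda'}$; the lower triangle $f_\lambda \circ g_\lambda = q_{\lambda, \lambda'}$ is the second identity post-composed with $q_{\lambda_2, \lambda'}$.

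For the converse I would build a candidate inverse $h = (h_\mu, \psi)$ by letting $\psi(\mu)$ be the index $\lambda'$ supplied by the hypothesis at $\mu$ and setting $h_\mu := g_\mu: Y_{\psi(\mu)} \to X_\mu$. The central verification is that $(h_\mu, \psi)$ is a morphism of inverse systems: for $\mu_1 \leq \mu_2$ I need some $\tilde\lambda \geq \psi(\mu_1), \psi(\mu_2)$ with $h_{\mu_1} \circ q_{\psi(\mu_1), \tilde\lambda} = p_{\mu_1, \mu_2} \circ h_{\mu_2} \circ q_{\psi(\mu_2), \tilde\lambda}$. Composing both sides on the right with $f_\lambda$ (for any $\lambda \geq \psi(\mu_1), \psi(\mu_2)$) collapses both expressions to $p_{\mu_1, \lambda}$ by using the triangle $g_\mu \circ f_{\psi(\mu)} = p_{\mu, \psi(\mu)}$ and the levelness of $f$. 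To upgrade \emph{equal after composing with $f_\lambda$} to strict equality in $\mathcal{C}$, I reapply the hypothesis at $\lambda$ to obtain $\tilde\lambda \geq \lambda$ and $g_\lambda^*: Y_{\tilde\lambda} \to X_\lambda$ with $f_\lambda \circ g_\lambda^* = q_{\lambda, \tilde\lambda}$; substituting $q_{\lambda, \tilde\lambda} = f_\lambda \circ g_\lambda^*$ into both sides then forces the two composites to coincide strictly at index $\tilde\lambda$. With $h$ thus defined, the pro-composites compute as $h \circ f = (g_\mu \circ f_{\psi(\mu)}, \psi) = (p_{\mu, \psi(\mu)}, \psi)$ and $f \circ h = (f_\nu \circ g_\nu, \psi) = (q_{\nu, \psi(\nu)}, \psi)$, and each is $\sim$ to the corresponding identity via the trivial telescoping $p_{\mu, \psi(\mu)} \circ p_{\psi(\mu), \lambda} = p_{\mu, \lambda}$ and its $q$-analogue.

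The main obstacle I expect is precisely the compatibility step in the converse direction. The hypothesis only provides local data at each level — a section and a retraction of $f_\lambda$ after passing to a larger index — and since $f_\lambda$ need not be monic or epic in a general category $\mathcal{C}$, one cannot cancel it to deduce strict commutation of the $h$-squares. The resolution is to invoke the Morita condition \emph{twice}: once to set up the candidate $h_\mu$, and a second time at the auxiliary index $\lambda$ to extract a further section $g_\lambda^*$. This dual use of both triangles is exactly what the two-sided hypothesis of the lemma is designed to support, and it is what makes the forward and backward directions match up.
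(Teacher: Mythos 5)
Your argument is correct, and both directions check out: the forward direction correctly unpacks the two relations $h\circ f\sim\mathrm{id}_{\mathbb{X}}$ and $f\circ h\sim\mathrm{id}_{\mathbb{Y}}$ and assembles $g_\lambda=h_\lambda\circ q_{\psi(\lambda),\lambda'}$, and in the converse the key step --- upgrading equality after precomposition with $f_\lambda$ to strict equality by factoring $q_{\lambda,\tilde\lambda}=f_\lambda\circ g_\lambda^{*}$ through a second application of the hypothesis --- is exactly the right way to handle the fact that $f_\lambda$ need not be epic. Note that the paper itself offers no proof of this statement: it is recalled verbatim from Morita and from Mardešić--Segal, and your argument is essentially the standard one found in those references.
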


\bibliography{bibliografia}
\bibliographystyle{plain}

\newcommand{\Addresses}{{
  \bigskip
  \footnotesize

  \textsc{ P.J. Chocano, Departamento de \'Algebra, Geometr\'ia y Topolog\'ia, Universidad Complutense de Madrid, Plaza de Ciencias 3, 28040 Madrid, Spain}\par\nopagebreak
  \textit{E-mail address}:\texttt{pedrocho@ucm.es}

  \medskip

\textsc{ M. A. Mor\'on,  Departamento de \'Algebra, Geometr\'ia y Topolog\'ia, Universidad Complutense de Madrid and Instituto de
Matematica Interdisciplinar, Plaza de Ciencias 3, 28040 Madrid, Spain}\par\nopagebreak
  \textit{E-mail address}: \texttt{ma\_moron@mat.ucm.es}

  \medskip

\textsc{ F. R. Ruiz del Portal,  Departamento de \'Algebra, Geometr\'ia y Topolog\'ia, Universidad Complutense de Madrid and Instituto de
Matematica Interdisciplinar
, Plaza de Ciencias 3, 28040 Madrid, Spain}\par\nopagebreak
  \textit{E-mail address}: \texttt{R\_Portal@mat.ucm.es}

}}

\Addresses

\end{document}